  \crefname{theorem}{Theorem}{Theorems}
  \crefname{thm}{Theorem}{Theorems}
  \crefname{lemma}{Lemma}{Lemmas}
  \crefname{lem}{Lemma}{Lemmas}
  \crefname{remark}{Remark}{Remarks}
  \crefname{prop}{Proposition}{Propositions}
\crefname{conjecture}{Conjecture}{Conjectures}
  \crefname{defn}{Definition}{Definitions}
\crefname{claim}{Claim}{Claims}
  \crefname{corollary}{Corollary}{Corollaries}
  \crefname{section}{Section}{Sections}
  \crefname{figure}{Figure}{Figures}
\newtheorem{thm}{Theorem}[section]
\newtheorem{lemma}[thm]{Lemma}
\newtheorem{corollary}[thm]{Corollary}
\newtheorem{prop}[thm]{Proposition}
\newtheorem{defn}[thm]{Definition}
\newtheorem{conjecture}[thm]{Conjecture}
\numberwithin{equation}{section}
\theoremstyle{definition}
\newtheorem{remark}[thm]{Remark}
\def \aH {\textsf{H}}
\def \ah {\textsf{h}}
\def \aC {\textsf{C}}
\def \ac {\textsf{c}}
\def \aF {\textsf{F}}
\def \ae {\mathsf{e}}
\def \len {\textsf{Len}}
\def \Area {\textsf{Area}}
\def \ph {\varphi}
\def \u {\text{upper}}
\def\cW{\mathcal{W}}
\def\cS{\mathcal{S}}
\def\cR{\mathcal{R}}
\def\cM{\mathcal{M}}
\def\cL{\mathcal{L}}
\def\cH{\mathcal{H}}
\def\cG{\mathcal{G}}
\def\cF{\mathcal{F}}
\def\cE{\mathcal{E}}
\def\cC{\mathcal{C}}
\def\cB{\mathcal{B}}
\def \ve {\varepsilon}
\def\P{\mathbb{P}}
\def\E{\mathbb{E}}
\def\R{\mathbb{R}}
\def\Z{\mathbb{Z}}
\def\N{\mathbb{N}}
\def \bv {\boldsymbol v}
\def \bd {\boldsymbol V}
\def  \p- {p\textunderscore}
\def\eps{\varepsilon}
\DeclareMathOperator{\var}{Var}
\DeclareMathOperator{\cov}{Cov}
\newcommand{\note}[1]{{\textcolor{red}{[note: #1]}}}
\newcommand{\indic}[1]{\mathbf{1}_{\{#1\}}}
\def\ph{\varphi}
\begin{document}

\title{Critical exponents on Fortuin--Kasteleyn weighted planar maps}

\author{Nathana\"el Berestycki\thanks{Supported in part by EPSRC grants EP/L018896/1 and EP/I03372X/1} \and Beno\^it Laslier\thanks{Supported in part by EPSRC grant EP/I03372X/1} \and Gourab Ray\thanks{Supported in part by EPSRC grant EP/I03372X/1}\\
}
\date{\small \today}

\maketitle

\medskip

\begin{abstract}
In this paper we consider random planar maps weighted by the self-dual Fortuin--Kasteleyn model with parameter $q \in (0,4)$. Using a bijection due to Sheffield and a connection to planar Brownian motion in a cone we obtain rigorously the value of the critical exponent associated with the length of cluster interfaces, which is shown to be
$$
 \frac{4}{\pi} \arccos \left( \frac{\sqrt{2 - \sqrt{q}}}{2} \right)=\frac{\kappa'}{8}.
$$
where $\kappa' $ is the SLE parameter associated with this model.
We also derive the exponent corresponding to the area enclosed by a loop which is shown to be 1 for all values of $q \in (0,4)$. Applying the KPZ formula we find that this value is consistent with the dimension of SLE curves and SLE duality. 
\end{abstract}

\noindent \textbf{Keywords:} Random planar maps, self-dual Fortuin--Kasteleyn percolation, critical exponents, Liouville quantum gravity, KPZ formula, Schramm--Loewner Evolution, cone exponents for Brownian motion, SLE duality, isoperimetric relations, Sheffield's bijection.

\medskip \noindent \textbf{2010 MSC classification:} 60K35, 60J67, 60D05

\def\m{\mathbf{m}}
\def\t{\mathbf{t}}
\def\p{\partial}
\def \bm {\boldsymbol{m}}
\def \bt {\boldsymbol{t}}

\tableofcontents
\begin{figure}[h]
  \centering
 \includegraphics[width=.49\textwidth]{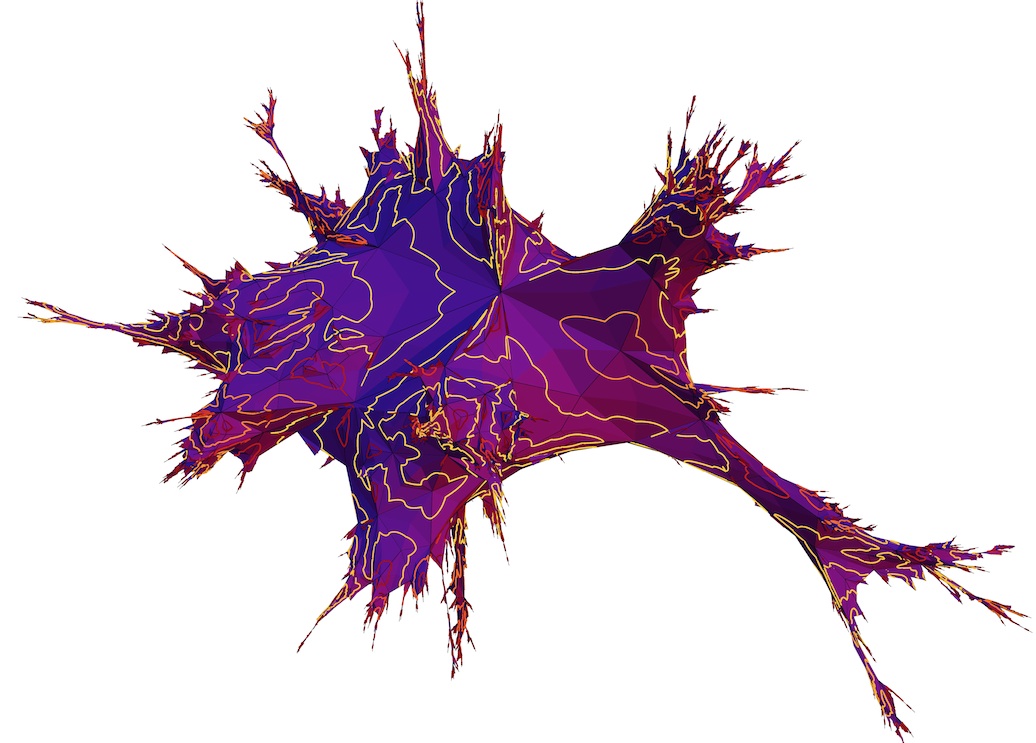}
\includegraphics[width = .49 \textwidth]{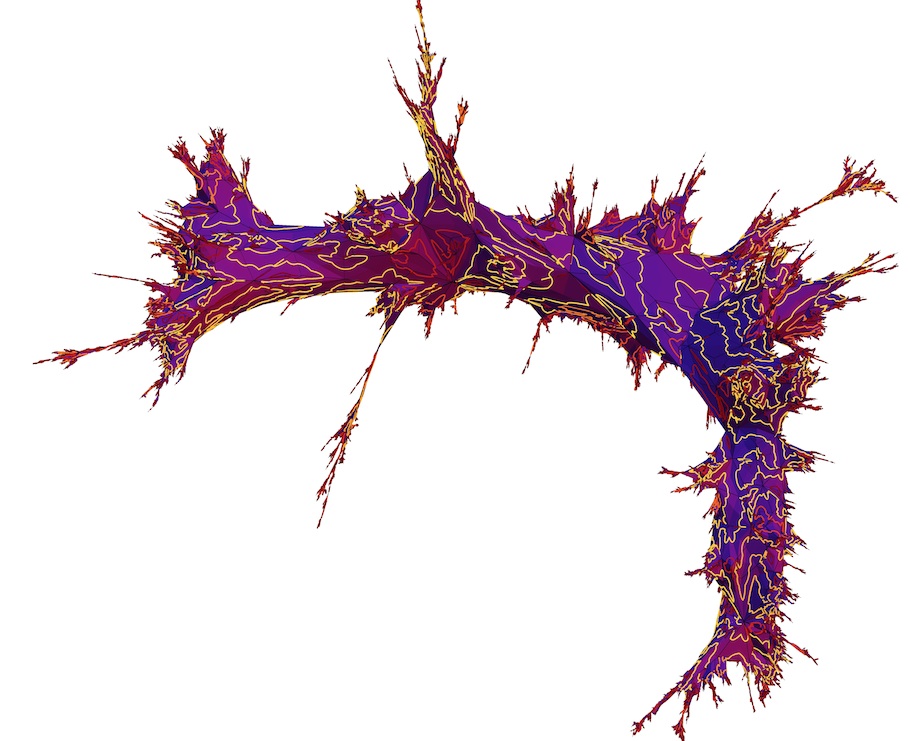}
\caption{FK-weighted random map and loops for $q=0.5$ (left) and $q=2$ (corresponding to the Ising model, right). The shade of loops indicates their length (dark for short and light for long loops).}
\end{figure}
\section{Introduction}

\label{S:intro}
Random surfaces have recently emerged as a subject of central
importance in probability theory. On the one hand, they are connected
to theoretical physics (in particular string theory) as they are basic
building blocks for certain natural quantizations of gravity
\cite{Pol, David,kpz88,DistKa}. On the other hand, at the mathematical
level, they show a very rich and complex structure which is only
beginning to be unravelled, thanks in particular to recent parallel
developments in the study of conformally invariant random processes,
Gaussian multiplicative chaos, and bijective techniques. We refer to
\cite{bourbaki} for a beautiful exposition of the general area with a
focus on relatively recent mathematical developments.

This paper is concerned with the geometry of random planar maps, which
can be thought of as canonical discretisations of the surface of
interest. The particular distribution on planar maps which we consider
was introduced in \cite{She11} and is roughly the following (the detailed
definitions follow in \cref{sec:critical-fk-model}). Let $q<4$ and let $n \ge 
1$. The random map $M_n$ that we consider is decorated with a (random) subset 
$T_n$  of edges. The map $T_n$ induces a dual collection of edges $T_n^\dagger$ 
on the dual map of $M$ (see \cref{fig:clusters}). Let $\m$ be a planar map with 
$n$ edges, and $\t$ a given subset of edges of $\m$. Then the probability to 
pick a particular $(\m,\t)$ is, by definition, proportional to 
\begin{equation}\label{FK}
\P( M_n = \m, T_n= \t) \propto \sqrt{q}^\ell,
\end{equation}
\begin{figure}[h]
\centering{
1.\includegraphics[scale=.4]{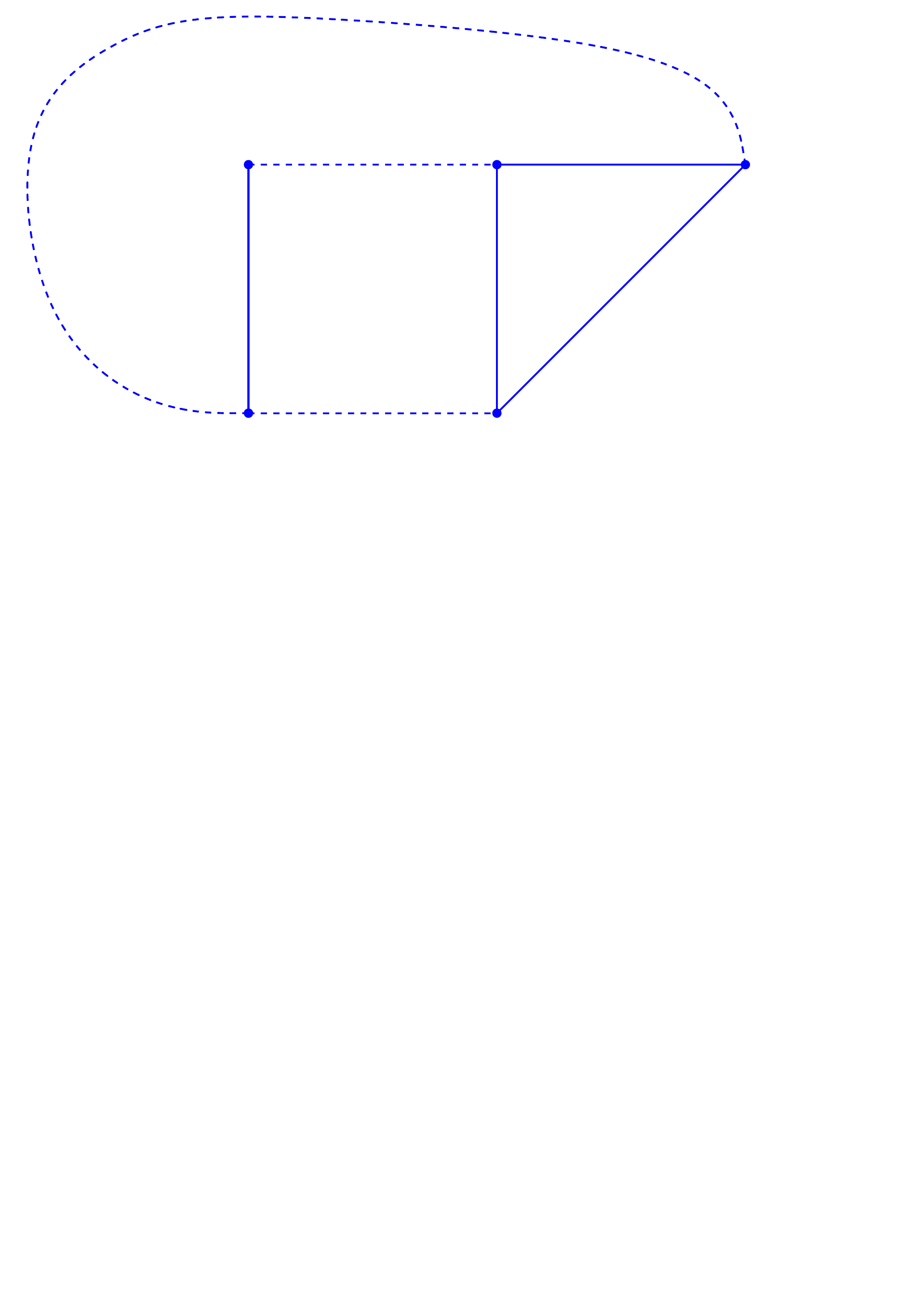} \quad \quad
2. \includegraphics[scale=.4]{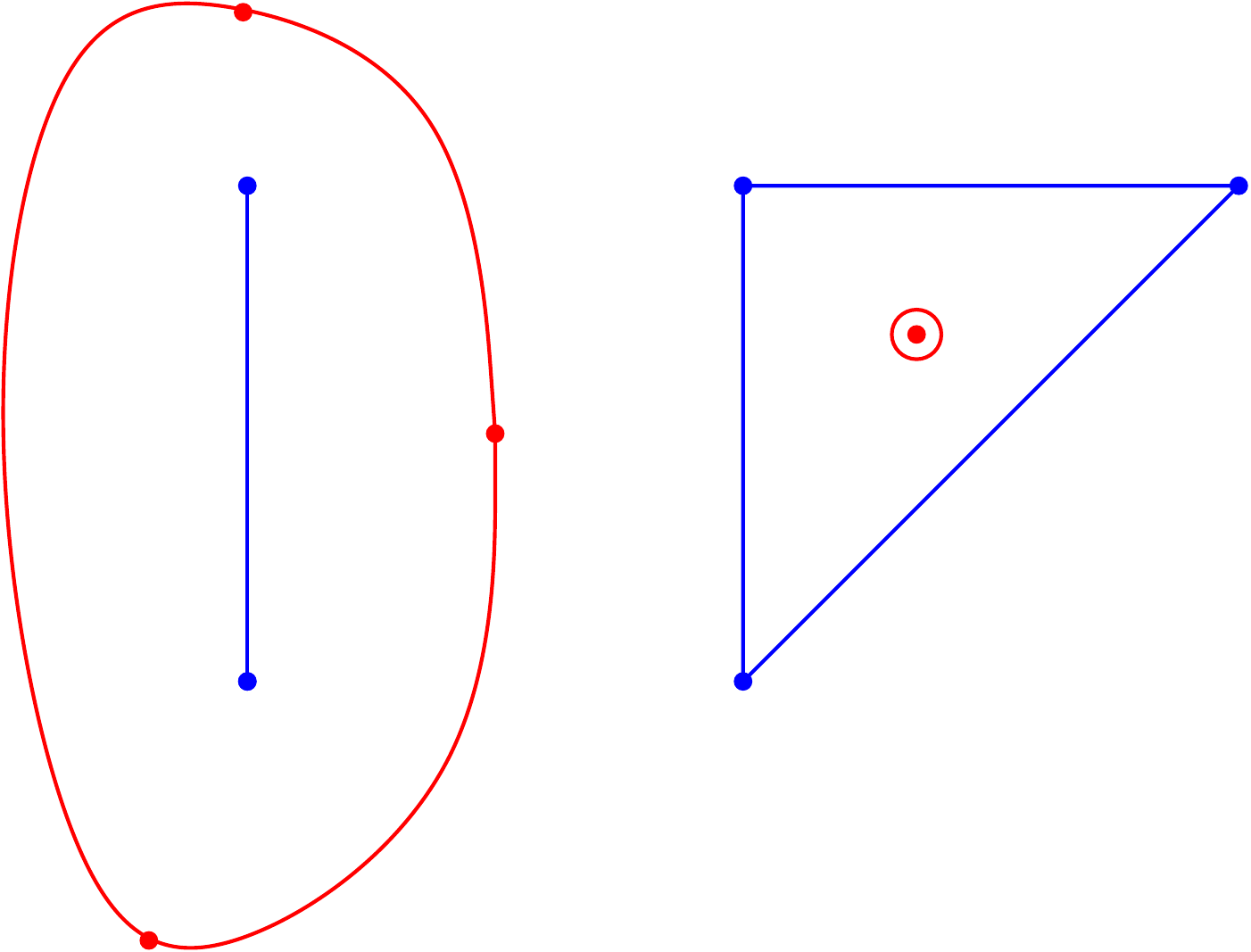}
}

\vspace{.5cm}
\centering{
3. \includegraphics[scale=.4]{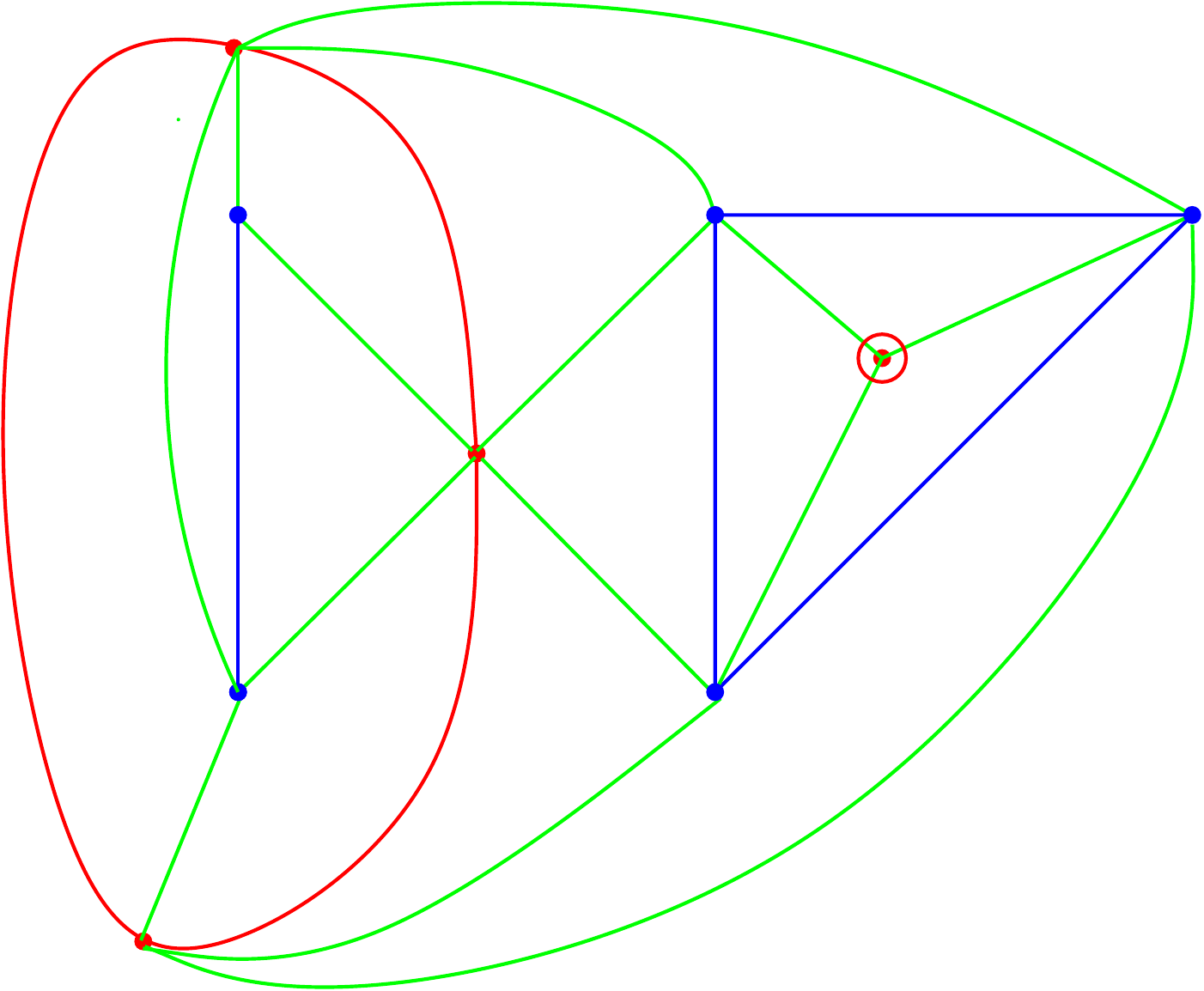}\quad \quad
4. \includegraphics[scale=.4]{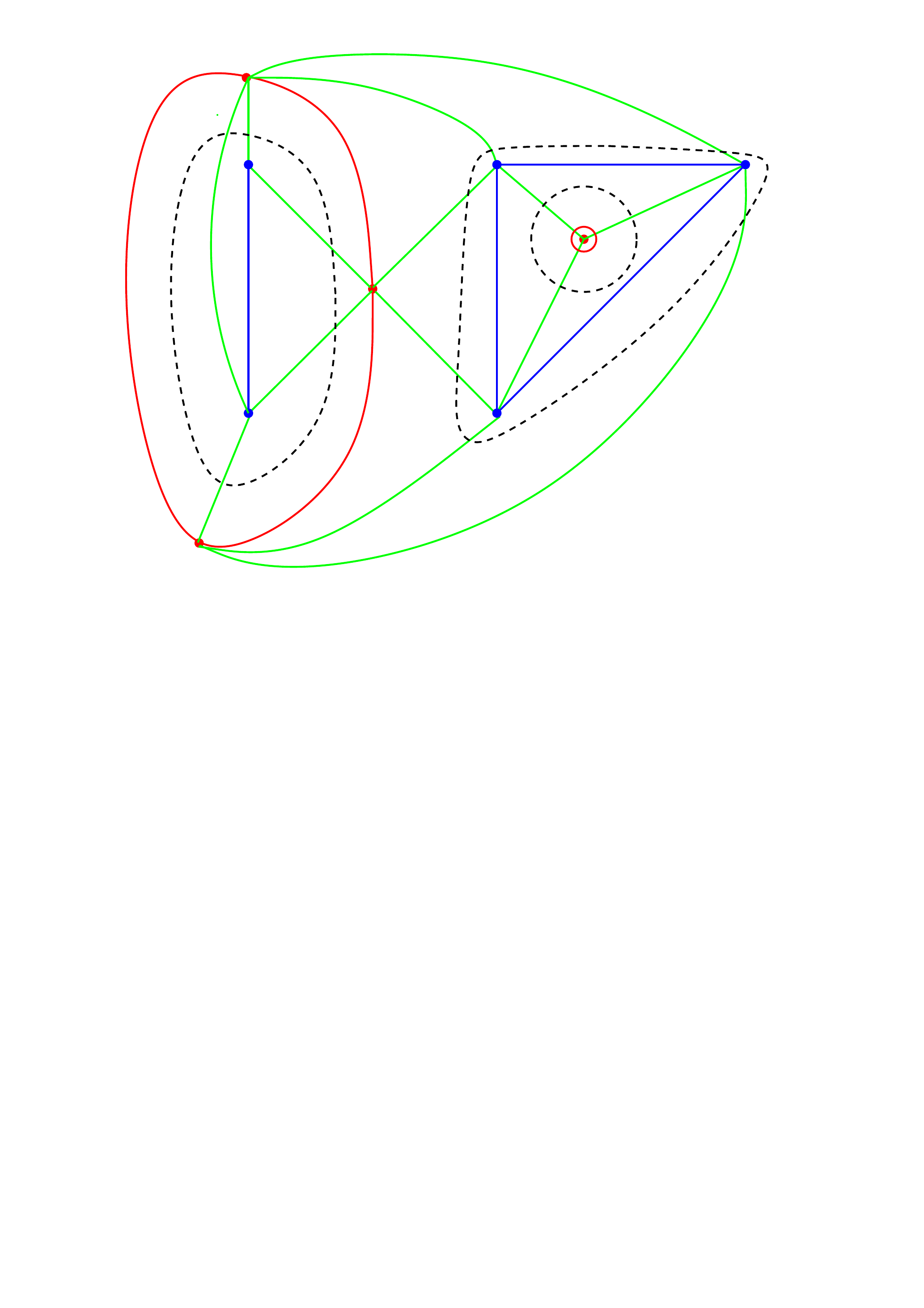}
}
\caption{A map $m$ decorated with loops associated with a set of open edges $t$. 
\emph{Top left:} the map is
  in blue, with solid open edges and dashed closed edges. \emph{Top right:} Open clusters and corresponding open dual clusters in blue and red. \emph{Bottom left:} every dual vertex is joined to its adjacent primal vertices by a green edge. This results in an augmented map $\bar m$ which is a triangulation. \emph{Bottom right:} the primal and dual open clusters are separated by loops, which are drawn in black and are dashed. Each loop crosses every triangle once, and so can be identified with the set of triangles it crosses. See \cref{sec:critical-fk-model} for details.
%
 %
 }
\label{fig:clusters}
\end{figure}
where $\ell$ is the (total) number of loops in between both primal and
dual vertex clusters in $\t$ which is equal to
the combined number of cluster in $T_n$ and $T_n^\dagger$ minus $1$
(details in \cref{sec:critical-fk-model}). Equivalently
given the map $M_n = \m$, the collection of edges $T_n$ follows the
distribution of the self-dual Fortuin--Kasteleyn model, which is in
turn closely related to the critical $q$-state Potts model, see
\cite{BKW}. Accordingly, the map $M_n$ is chosen with probability
proportional to the partition function of the Fortuin-Kasteleyn
model on it.

One reason for this particular choice is the belief (see
e.g. \cite{LQGmating}) that after Riemann uniformisation, a large sample of
such a map closely approximates a \emph{Liouville quantum gravity} surface. 
This is the random metric obtained by considering the Riemannian metric tensor
\begin{equation}
\label{tensor}
e^{\gamma h(z)} |dz|^2,
\end{equation}
where $h(z)$ is an instance of the Gaussian free field. (We emphasise that a rigorous construction of the metric associated to \eqref{tensor} is still a major open problem.) The parameter $\gamma\in(0,2)$ is then believed to be related to the parameter $q$ of \eqref{FK} by the relation
\begin{equation}
\label{q_gamma}
q = 2 + 2\cos\left(\frac{8\pi}{\kappa'}\right); \quad \gamma = \sqrt{\frac{16}{\kappa'}} .
\end{equation}
Note that when $q \in (0,4)$ we have that $\kappa' \in (4,8)$ so that
it is necessary to generate the Liouville quantum gravity with the
associated dual parameter $\kappa  = 16/\kappa' \in (0,4)$. This
ensures that $\gamma = \sqrt{\kappa } \in (0,2)$, which is the nondegenerate phase for the associated mass measure and Brownian motions,
see \cite{DS11kpz,ber2013diffusion,berkpz}.

Observe that when $q=1$, the FK model reduces to ordinary bond
percolation. Hence this corresponds to the case where $M$ is chosen
according to the uniform probability distribution on planar maps with
$n$ edges. This is a situation in which remarkably detailed
information is known about the structure of the planar map. In
particular, a landmark result due to Miermont \cite{Miermont} and Le
Gall \cite{LeGall} is that, viewed as a metric space, and rescaling
edge lengths to be $n^{-1/4}$, the random map converges to a  multiple
of a certain universal random metric space, known as the
\emph{Brownian map}. (In fact, the results of Miermont and Le Gall
apply respectively to uniform quadrangulations with $n$ faces and to
$p$-angulation for $p=3$ or $p$ even, whereas the convergence result
concerning uniform planar maps with $n$ edges was established a bit
later by Bettinelli, Jacob and Miermont
\cite{betscaling}).  Critical percolation on a related
half-plane version of the maps has been analysed in a recent work of
Angel and Curien \cite{AC13}, while information on the full
plane percolation model was more recently obtained by Curien and
Kortchemski \cite{CK13}. Related works on loop models (sometimes
rigorous, sometimes not) appear in \cite{guionnet12,borot12,eynard95,b11,bor12loop, BBG}.

The goal of this paper is to obtain detailed geometric information about the clusters of the self-dual FK model in the general case $q\in (0,4)$. As we will see, our results are in agreement with nonrigorous predictions from the statistical physics community. In particular, after applying the \textbf{KPZ transformation}, they correspond to Beffara's result about the dimension of SLE curves \cite{beffdim} and SLE duality. 

\subsection{Main results}
Let $L_n$ denote a \emph{typical loop}, that is, a loop chosen
uniformly at random from the set of loops induced by $(M_n,T_n)$ which
follow the law given by \eqref{FK}. Such a loop separates the
map into an outside component which contains the root and an inside
component which does not contain the root (precise definitions follow in
\cref{sec:critical-fk-model}). If the loop passes through the root, we
leave $L_n$ undefined (this is a low probability event so the
definition does not
matter). Let $\len(L_n)$ denote the number of
triangles in the loop and let $\Area(L_n)$ denote the number of
triangles inside it. Let
\begin{equation}\label{p0}
p_0 = \frac{\pi}{4 \arccos \left( \frac{\sqrt{2 - \sqrt{q}}}{2}
  \right) } =\frac {\kappa'}{8}
  \end{equation}
where $q$ and $\kappa'$ are related as in \eqref{q_gamma}.

\begin{thm}
\label{T:typ}
We have that $\len(L_n) \to \mathsf L$ and $\Area(L_n) \to \mathsf{A}$
in law. Further, the random variables $\mathsf L$ and $\mathsf A$
satisfy the following. 
\begin{equation}\label{Tboundary}
  \P( \mathsf L> k) = k^{-1/p_0 + o(1)},
\end{equation}
and
\begin{equation}\label{Tarea}
 \P( \mathsf A > k) =  k^{-1 + o(1)}.
\end{equation}
\end{thm}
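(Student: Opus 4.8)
The plan is to run the whole argument through Sheffield's hamburger--cheeseburger bijection \cite{She11}, which represents the decorated map $(M_n,T_n)$ \emph{together with its loop ensemble} as a uniformly random word of length $2n$ over a fixed finite alphabet whose burger/order frequencies are governed by a single parameter $p=p(q)$, with $\sqrt q$ an explicit increasing function of $p$. Scanning the word from left to right and recording the numbers of unmatched hamburgers and of unmatched cheeseburgers currently on the stack produces a two-dimensional walk $Z_k=(Z_k^{(1)},Z_k^{(2)})$. Two facts from \cite{She11} and its scaling-limit developments I would take as black boxes. First, a \emph{local limit}: as $n\to\infty$ the length-$2n$ word converges, in the neighbourhood of the root, to a bi-infinite i.i.d.\ word, and since the triangle at the root, the loop through it, and the region it cuts off all depend with high probability on only finitely much of the word, this gives $\len(L_n)\to\mathsf L$ and $\Area(L_n)\to\mathsf A$ in law, where $\mathsf L$ and $\mathsf A$ are explicit functionals of the bi-infinite word; this proves the convergence assertion and reduces the theorem to computing the tails of $\mathsf L$ and $\mathsf A$. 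Second, an \emph{invariance principle}: $n^{-1/2}Z_{\lfloor 2nt\rfloor}\to\widehat B_t$, a planar Brownian motion whose two coordinates have a correlation $\rho=\rho(q)$ which, through \eqref{q_gamma}, is an explicit trigonometric function of $q$.

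\textbf{The length exponent \eqref{Tboundary}.} Under the encoding, the loop through the root corresponds to a \emph{cone excursion} of $Z$ straddling time $0$: there are indices $a<0<b$ so that on $[a,b]$ the walk stays inside a fixed translated cone, with $Z_a$ and $Z_b$ sitting on its two bounding rays, and $\len(L_n)$ is comparable (within a bounded factor coming from the finitely many triangle types) to $b-a$. Passing to the Brownian limit $\widehat B$, the correlation $\rho(q)$ fixes the opening angle of this cone, and one then invokes the classical asymptotics for planar Brownian motion in a cone --- the cone exponents --- to evaluate the tail of the duration of a cone excursion straddling a fixed time. Carrying the resulting exponent back through the trigonometry of \eqref{q_gamma} produces exactly $1/p_0$ from \eqref{p0} --- this is where the $\arccos$ enters --- yielding \eqref{Tboundary}, once the Brownian estimate has been transferred to the discrete walk $Z$ with enough uniformity to get matching upper and lower bounds (hence the $o(1)$).

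\textbf{The area exponent \eqref{Tarea}.} The region cut off by the loop corresponds, in the word, to a \emph{balanced (reduced) sub-word} delimited by the same excursion, and $\Area(L_n)$ is comparable to its length. To extract the exponent I would compare the area to the length: the inside of a loop is tiled by the loops nested immediately within it, and the ratios of boundary lengths between a loop and its children form a multiplicative cascade whose index is $p_0$; iterating, $\mathsf A=\mathsf L^{1/p_0+o(1)}$, whence \eqref{Tboundary} forces $\P(\mathsf A>k)=k^{-1+o(1)}$. (Alternatively one estimates the length of the balanced sub-word directly as an excursion/return time of the walk $Z$.) The salient point is that the answer is $1$ for \emph{every} $q\in(0,4)$ --- the correlation $\rho(q)$ drops out --- which is precisely the $q$-independence that, via the KPZ relation, matches the dimension of SLE curves and SLE duality.

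\textbf{Expected main difficulty.} The combinatorial dictionary and the invariance principle are essentially supplied by \cite{She11}; the real work is analytic. The crux is to upgrade the Brownian cone-exponent asymptotics to the discrete walk $Z$: one needs tightness of the rescaled cone excursions, control of the walk as it approaches the apex of the cone, and matching one-sided estimates, so as to pin down the exponent together with its $o(1)$ error; and, for the area, to make the cascade comparison $\mathsf A=\mathsf L^{1/p_0+o(1)}$ rigorous, the enclosed region not being an interval of the word.
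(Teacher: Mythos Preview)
Your local-limit reduction is right and matches the paper. The gap is in your combinatorial dictionary for the length.

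You assert that $\len(L)$ is comparable, up to a bounded factor, to the duration $b-a$ of the cone excursion of the Sheffield walk $Z$. This is false. The word between an $\aF$ and its match is the \emph{envelope}; its length (the quantity the paper calls $T$) counts all triangles in the envelope, but most of these lie in nested maximal sub-envelopes, and only \emph{one} triangle of each sub-envelope belongs to the outer loop (Lemma~\ref{prop:loop+area}). The ratio $T/\len(L)$ is unbounded: indeed $\P(T>k\mid X_0=\aF)=k^{-p_0+o(1)}$ while $\P(\len(L)>k)=k^{-1/p_0+o(1)}$, and $p_0<1<1/p_0$ throughout $q\in(0,4)$. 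Your argument, applied to the excursion duration of $Z$, would output the exponent $p_0$ rather than $1/p_0$.

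The paper's route is to define a \emph{reduced walk} $(c_n,h_n)$ that explores the envelope but collapses each interior $\aF$-sub-envelope into a single step, recording only its reduced-word length $|\cR|$ as the jump. Then $\len(L)$ equals the first-quadrant exit time $\tau$ of this walk. The increments are centred but \emph{heavy-tailed}: $\P(|\cR|>k)=k^{-2p_0+o(1)}$, and it is \emph{here}, in the tail of $|\cR|=|J_T|$, that the Brownian cone exponents enter (Theorem~\ref{thm:exit}). The length exponent $1/p_0$ then comes from the return-time exponent of a one-dimensional heavy-tailed walk (Lemma~\ref{lem:return_heavy}), namely $1/(2p_0)$ per coordinate, squared for the two-coordinate exit. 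For the area the same issue recurs --- the envelope word length is not $\Area(L)$ either, since same-type maximal sub-envelopes lie in the \emph{exterior} of $L$ --- and the paper does not argue via your cascade $\mathsf A\approx\mathsf L^{1/p_0}$ but directly: it writes $\Area(L)=\sum_{n\le\tau}T_n^{\ac}$ (or $T_n^{\ah}$) in terms of the reduced-walk steps and bounds this heavy-tailed sum using the moment estimate $\E(T^p\mid|J_T|=n)\approx n^{2p}$ from Theorem~\ref{thm:exit}\,(v) together with a H\"older / max-versus-sum argument.

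A smaller point: a uniformly chosen loop corresponds under the bijection to conditioning on $X_0=\aF$ (Proposition~\ref{prop:typical_law}), not to an excursion of $Z$ straddling the origin; the latter is the loop through the root triangle and is size-biased, with different exponents.
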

\begin{remark}
As we were finishing this paper, we learnt of the related work, completed 
independently and simultaneously, by Gwynne, Mao and Sun \cite{GMS}. They obtain 
several scaling limit results, showing that various quantities associated with 
the FK clusters converge in the scaling limit to the analogous quantities 
derived from Liouville quantum gravity in \cite{LQGmating}.  
Some of their results also overlap with the results above. In
particular they obtain a stronger version of the length exponent \eqref{Tboundary} 
by showing
that in addition that the tails are regularly varying. Though both papers rely
on Sheffield's bijection \cite{She11} and a connection to planar Brownian 
motion in a cone, it is interesting to note that the proof techniques are 
substantially different. The techniques in this paper are comparatively simple, 
relying principally on harmonic functions and appropriate martingale techniques.
\end{remark}

Returning to \cref{T:typ}, it is in fact not so hard to see
that when rooted at a randomly chosen edge, the decorated maps
$(M_n,T_n)$ themselves converge for the Benjamini--Schramm (local)
topology. This is already implicit in the work of Sheffield \cite{She11}
and properties of the infinite local limit $(M_\infty, T_\infty)$ have
recently been analysed in a paper of Chen \cite{chen}. In particular a uniform exponential bound on the
degree of the root is obtained. Together with earlier results of Gurel Gurevich
and Nachmias \cite{GN12}, this implies for instance that random walk on
$M_\infty$ is a.s. recurrent. From this it is actually not hard to see
that $\len (L_n)$ and $\Area(L_n)$ converge in law in
\cref{T:typ}. The major contributions in this paper are the other
assertions in \cref{T:typ}.

\medskip Our results can also be phrased for the loop $L^*$ going through the
origin in this infinite map $M_\infty$. Since the root is uniformly
chosen from all possible oriented edges, it is easy to see that this
involves biasing by the length of a typical loop. Hence the exponents are slightly different. For instance,
for the length $\len(L^*)$ and $\Area(L^*)$ of $L^*$, we get
\begin{equation}
\label{lengthbias}
\P( \len(L^*) > k) = k^{-1/p_0+1 + o(1)},
\end{equation}
 For the area, it can be seen from our techniques that
\begin{equation}
\label{areabias}
\P( \Area(L^*) \ge k) = k^{- (1-p_0) + o(1)}, 
\end{equation}
(The authors of \cite{GMS} have kindly indicated to us that \eqref{areabias}, together with a regular variation statement, could probably also be deduced from their Corollary 5.3 with a few pages of work, using arguments similar to those already found in their paper).

\medskip  While our techniques could also probably be used to compute other related exponents we have not pursued this, in order to keep the paper as simple as possible. We also remark that the techniques in the present paper can be used to study the looptree structure of typical cluster boundaries (in the sense of Curien and Kortchemski \cite{CK13}). 

\begin{remark}
In the particular case of percolation on the uniform infinite random
planar map (UIPM) $M_\infty$, i.e. for $q=1$, we note that our results
give $p_0 = 3/4$, so that the typical boundary loop exponent is
$1/p_0= 4/3$. This is consistent with the more precise asymptotics
derived by Curien and Kortchemski \cite{CK13} for a
related percolation interface. Essentially their problem is analogous
to the biased loop case, for which the exponent is, as discussed
above, $1/p_0 -1 = 1/3$. This matches Theorem 1 in
\cite{CK13}, see also Theorem 2 (ii) in
\cite{AC13} for the half-plane case. Likewise, the exponent for
the area of $L^*$ (in the biased case) is $1-p_0 = 1/4$, which
matches (i) in the same theorem of \cite{AC13}.
\end{remark}
\subsection{Cluster boundary, KPZ formula, bubbles and dimension of SLE}
\label{SS:KPZ}
\textbf{KPZ formula}. We now discuss how our results verify the KPZ
relation between critical exponents. We first recall the 
KPZ formula. For a fixed or random independent set $A$ with Euclidean scaling exponent $x$, its ``quantum analogue" has a scaling exponent $\Delta$, where $x$ and $\Delta$ are related by the formula
\begin{equation}
\label{KPZ}
x = \frac{\gamma^2}{4} \Delta^2 + (1- \frac{\gamma^2}{4}) \Delta.
\end{equation}
See \cite{DS11kpz,berkpz,RhodesVargas} for rigorous formulations of this formula at the continuous level. Concretely, this formula should be understood as follows. Suppose that a certain subset $A$ within a random map of size $N$ has a size $|A| \approx N^{1-\Delta}$. Then its Euclidean analogue within a box of area $N$ (and thus of side length $n = \sqrt{N}$) occupies a size $|A'| \approx N^{1-x} = n^{1/2 - x/2}.$ In particular, observe that the approximate (Euclidean) Hausdorff dimension of $A'$ is then $ 2-2x$.
\paragraph{Cluster boundary.} The exponents in \eqref{Tboundary} and \eqref{Tarea} suggest that for a large critical FK cluster on a random map, we have the following approximate relation between the area and the length:
\begin{equation}
\label{iso1} 
\mathsf L  = \mathsf A^{p_0 + o(1)}.
\end{equation}
The relation \eqref{iso1}
suggests that the quantum scaling exponent $\Delta = 1-p_0$. Applying the KPZ formula we see that the corresponding Euclidean exponent is $1/2 -
\kappa'/16$. Thus the Euclidean dimension of the boundary is
$1+\kappa'/8$. The conjectured scaling limits of the boundary is a
CLE$_{\kappa'}$ curve and hence this exponent matches the one obtained by
Beffara \cite{beffdim}.

\paragraph{Bubble boundary.} We now address a different sort of
relation with its volume inside, which concerns large filled-in \emph{bubbles} or envelopes in the terminology which we use in this paper (see \cref{def:area} and immediately above for a definition). In the scaling limit and after a conformal embedding, these are expected to converge to filled-in SLE loops and more precisely, quantum discs in the sense of \cite{LQGmating}.
At the local limit level, they should correspond to Boltzmann maps whose boundaries should form a looptree structure 
in the sense of Curien and Kortchemski
\cite{CK13}. 
We establish in 
  \cref{thm:exit}, \cref{item4,mainitem} that with high probability
\begin{equation}
\label{iso2}
|\p \cB|= |\cB|^{1/2 + o(1)}.
\end{equation}
This suggests a quantum dimension of $\Delta = 1/2$ and
remarkably, this boundary bulk behaviour is independent of $q$ (or
equivalently of $\gamma$) and therefore corresponds with the usual
Euclidean isoperimetry in two dimensions. Applying the KPZ formula 
\eqref{KPZ}, we obtain a Euclidean scaling exponent $$x=\frac12 - \frac1{\kappa'}.$$
On the other hand, recall
the Duplantier duality which states that the outer boundary of an
SLE$_{\kappa'}$ curve is an SLE$_{16/{\kappa'}}$ = SLE$_\kappa$ curve. This has been
established in many senses in \cite{IGIV,dualitysle,zhanduality}. Hence the
dimension of the outer boundary should be $1+ \kappa/8 = 1+2/\kappa'$ which is equal to 
$2(1-x)$. Thus KPZ is verified.

\medskip \textbf{Acknowledgements} We are grateful to a number of people for useful discussions: Omer Angel, Linxiao Chen, Nicolas Curien, Gr\'egory Miermont, Jason Miller, Scott Sheffield, and Perla Sousi. We thank Ewain Gwynne, Cheng Mao and Xin Sun for useful comments on a preliminary draft, and for sharing and discussing their results in \cite{GMS} with us. Part of this work was completed while visiting the \emph{Random Geometry} programme at the Isaac Newton Institute. We wish to express our gratitude for the hospitality and the stimulating atmosphere.

\section{Background and setup}
\subsection{The critical FK model}\label{sec:critical-fk-model}
Recall that a \textbf{planar map} is a proper embedding of a
(multi) graph with $n$ edges in the plane which is viewed up to
orientation preserving homeomorphisms from the plane to itself. Let
$\bm_n$ be a map with $n$ edges and $\bt_n$ be the subgraph induced by a
 subset of its edges and \emph{all} of its vertices. We call the pair
$(\bm_n, \bt_n)$ a \textbf{submap decorated map}. Let $\bm_n^\dagger$ denote 
the \emph{dual map} of
$\bm_n$. Recall that the vertices of the dual map  correspond to the
faces of $\bm_n$ and two vertices in the dual map are adjacent if and
only if their corresponding faces are adjacent to a common edge in the
primal map. Every edge $e$ in the primal map corresponds to an edge $e^\dagger$ in the
dual map which joins the vertices corresponding to the two faces
adjacent to $e$. The
dual map $\bt_n^\dagger$ is the graph formed by the subset of edges $\{e^\dagger:
e\notin \bt_n\}$. We fix an oriented edge in the map $\bm_n$ and define it to 
be the
root edge.

\medskip Given a subgraph decorated map $(\bm_n,\bt_n)$, one can associate to it a set of
loops which form the interface between the two clusters. To define it
precisely, we consider a refinement of the map $\bm_n$ which is formed
by joining the dual vertices in every face of $\bm_n$ with the primal vertices
incident to that face. We call these edges \textbf{refinement
  edges}. Every
edge in $\bm_n$ corresponds to a quadrangle in its refinement formed by the union of the two triangles
incident to its two sides. In fact the refinement of $\bm_n$ is a
quadrangulation and this construction defines a
bijection between maps with $n$ edges and quadrangulations with $n$
faces.

There is an obvious one-one correspondence between the refinement edges 
and the
oriented edges in a map. Every oriented edge comes with a head and a
tail and a well defined triangle to its left. Simply match every
oriented edge with the refinement edge of the triangle to its left
which is incident to its tail. We call such an edge the refinement
edge corresponding to the oriented edge.

Given a subgraph decorated map $(\bm_n,\bt_n)$ define the map $(\bar
\bm_n, \bar \bt_n)$ to be formed by the
union of $\bt_n,\bt_n^\dagger$ and the
refinement edges. The root edge of $(\bar
\bm_n, \bar \bt_n)$ is the refinement edge corresponding to the root
edge in $\bm_n$ oriented towards the dual vertex. The \textbf{root
  triangle} is the triangle to the right of the root edge. It is easy to see that such a map is a
triangulation: every face in the refinement of $\bm_n$ is divided into
two triangles either by a primal edge in $\bt_n$ or a dual edge in
$\bt_n^\dagger$. Thus every triangle in $(\bar \bm_n, \bar \bt_n)$ is formed either by a
primal edge and two refinement edges or by a dual edge and two
refinement edges. For future reference, we call a triangle in $(\bar \bm_n, \bar \bt_n)$ with a
primal edge to be a \textbf{primal triangle} and that with a dual edge
to be a \textbf{dual triangle} (\cref{F:triangle}).

\medskip Finally we can define the interface as a subgraph of the dual map of the triangulation $(\bar \bm_n, \bar \bt_n)$. Simply
join together faces in the adjacent triangles which share a
common refinement edge. Clearly, such the interface is ``space
filling'' in the sense that every face in $(\bar \bm_n, \bar \bt_n)$ is traversed by
an interface. Also it is fairly straightforward to see that an
interface is a collection of {simple} cycles which we refer to as the
\textbf{loops} corresponding to the configuration $(\bm_n,\bt_n)$. Also such 
loops always have primal vertices one its one side and dual vertices on its 
other side. Also
every loop configuration corresponds to a unique  configuration $\bt_n$
and vice versa. Let $\ell(\bm_n,\bt_n)$ denote the number of loops
corresponding to a configuration $(\bm_n,\bt_n)$. The
critical FK model with parameter $q$ is a random configuration $(M_n,T_n)$ which follows
the law
\begin{equation}
  \label{eq:FKdef}
  \P((M_n,T_n)=(\bm_n,\bt_n)) \propto \sqrt{q}^{\ell(\bm_n,\bt_n)}
\end{equation}

The model makes sense for any $q \in [0,\infty)$ but we shall focus on
$q \in [0,4)$. It is also easy to see that the law of $(M_n,T_n)$
remains unchanged if we re-root the map at an independently and
uniformly chosen oriented edge (see, for example, \cite{UIPT1} for an argument).

\medskip Let $c(t_n)$ and $c(t_n^\dagger)$ denote the number of vertex clusters
of $t_n$ and $t_n^\dagger$. Recall that the loops form the interface 
between primal and dual vertex clusters. From this, it is clear that
$\ell(\bm_n,\bt_n) = c(t_n)+c(t_n^\dagger)-1$. Let $v(G), e(G)$ denote the
number of vertices and edges in a graph $G$. An application of Euler's formula
shows that
\begin{equation}
  \label{eq:1}
  \P(\bm_n) = \frac1Z (\sqrt{q})^{-v(\bm_n)} \sum_{G \subset \bm_n} \sqrt{q}^{e(G)}q^{c(G)}.
\end{equation}
where $Z$ denotes the partition function. It is easy to conclude
from this
that the model is self dual and hence critical. Note that \eqref{eq:1}
corresponds to the Fortuin-Kasteleyn random cluster model which is in
turn is equivalent to the $q$-state Potts model on  
maps with $n$ edges (see \cite{BKW}). 

\subsection{Sheffield's bijection} \label{sec:HC bij}

We briefly recall the Hamburger--Cheeseburger bijection due to
Sheffield (see also related works by Mullin \cite{mullin67} and
Bernardi \cite{bernardi07,bernardi08}).

\begin{figure}
\begin{center}
\includegraphics[width=.1\textwidth]{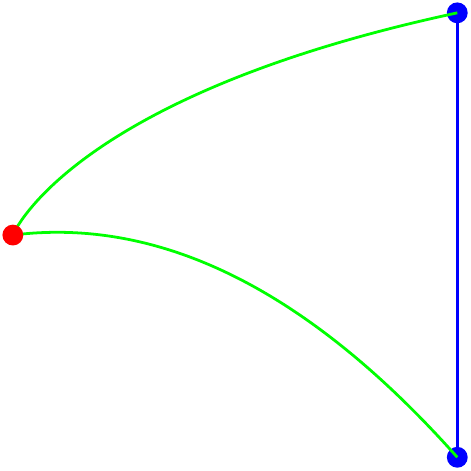} \quad \quad 
\quad 
\includegraphics[width=.1\textwidth]{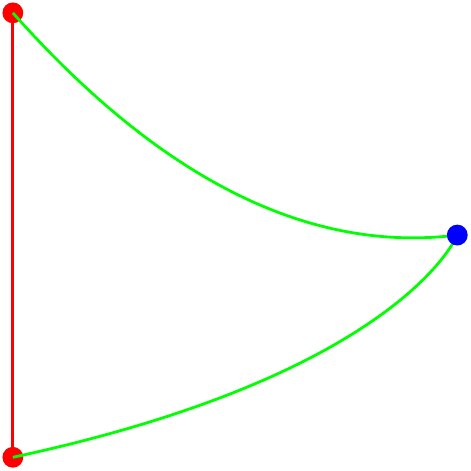}\quad \quad 
\includegraphics[width=.2\textwidth]{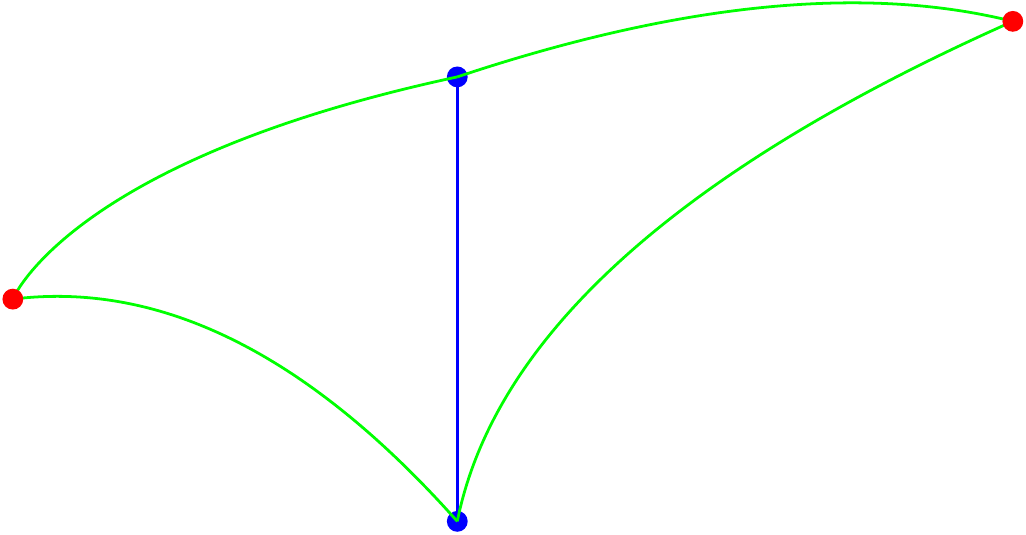}
\caption{Refined or green edges split the map and its dual into primal and dual triangles. Each primal triangle sits opposite another primal triangle, resulting in a primal quadrangle as above. }
\label{F:triangle}
\end{center}
\end{figure}

\medskip Recall that the refinement edges split the map into triangles which can be of only two types: a primal triangle (meaning two green or refined edges and one primal edge) or a dual triangle (meaning two green or refined edges and one dual edge). For ease of reference primal triangles will be associated to hamburgers, and dual triangles to cheeseburgers. Now consider the primal edge in a primal triangle; the triangle opposite that edge is then obviously a primal triangle too. Hence it is better to think of the map as being split into quadrangles where one diagonal is primal or dual (see \cref{F:triangle}).

\medskip We will reveal the map, triangle by triangle, by exploring it with a 
path which visits every triangle once (hence the word ``space-filling"). We will 
keep track of the first time that the path enters a given quadrangle by saying 
that {either a hamburger or a cheeseburger is produced, depending on whether the quadrangle is primal or dual.} 
Later on, when the path comes back to the quadrangle for the second 
and last time, we will say that the burger has been eaten. We will use the 
letters $\ah, \ac$ to indicate that a hamburger or cheeseburger has been 
produced and we will use the letters $\aH, \aC$ to indicate that a burger 
has been eaten (or \emph{ordered} and eaten immediately). So in this description we will have one letter for every triangle.

\medskip It remains to specify in what order are the triangles visited, or 
equivalently to describe the space-filling path. In the case where the 
decoration $\bt_n$ consists of a single spanning tree (corresponding to $q=0$ as 
we will see later) the path is simply the contour path going around the 
tree. Hence in that case the map is entirely described by a sequence of $2n$ 
letters in the alphabet $\{\ah, \ac, \aH, \aC\}$. 

\medskip We now describe the situation when $\bt_n$ is arbitrary, which is more delicate. The idea is that the space-filing path starts to go around the component of the root edge, i.e. explores the loop of the root edge, call it $L_0$. However, we also need to explore the rest of the map. To do this, consider the last time that $L_0$ is adjacent to some triangle in the complement of $L_0$, where by complement we mean the set of triangles which do not intersect $L_0$. (Typically, this time will be the time when we are about to close the loop $L_0$). At that time we continue the exploration as if we had \textbf{flipped} the diagonal of the corresponding quadrangle. This has the effect the exploration path now visits two loops. We can now iterate this procedure. A moment of thought shows that this results in a space-filling path which visit every quadrangle exactly twice, going around some virtual tree which is not needed for what follows. We record a flipping event by the symbol $\aF$. More precisely, we associate to the decorated map $(\bm_n, \bt_n)$ a list of $2n$ symbols $(X_i)_{1\le i \le 2n}$ taking values in the alphabet $\Theta = \{ \ah, \ac, \aH, \aC, \aF\}$. For each triangle visited by the space-filling exploration path we get a symbol in $\Theta$ defined as before if there was no flipping, and we use the symbol $\aF$ the second time the path visit a flipped quadrangle.

 It is not obvious but true that this list of symbols completely
 characterises the decorated map $(\bm_n, \bt_n)$. Moreover, observe
 that each loop corresponds to a symbol $\aF$ (except the loop through the 
root).
\subsection{Inventory accumulation}

We now explain how to reverse the bijection. One can interpret an element in $\{\ah,\ac,\aH,\aC\}^{2n}$ as a
last-in, first-out inventory accumulation process in a burger factory with two types of products:
hamburgers and cheeseburgers. Think of a sequence of events
occurring per unit time in
which either a burger is produced (either ham or cheese) or there is
an order of a burger (either ham or cheese). The burgers are put in a
single \textbf{stack} and every time there is an order of a certain
type of burger, the freshest burger in the stack of the corresponding
type is removed. The symbol $\ah $ (resp. $\ac$) corresponds to a ham
(resp. cheese) burger production and the symbol $\aH$ (resp. $\aC$)
corresponds to a ham (resp. cheese) burger order. 



\begin{figure}
  \centering
  \includegraphics[scale = 0.8]{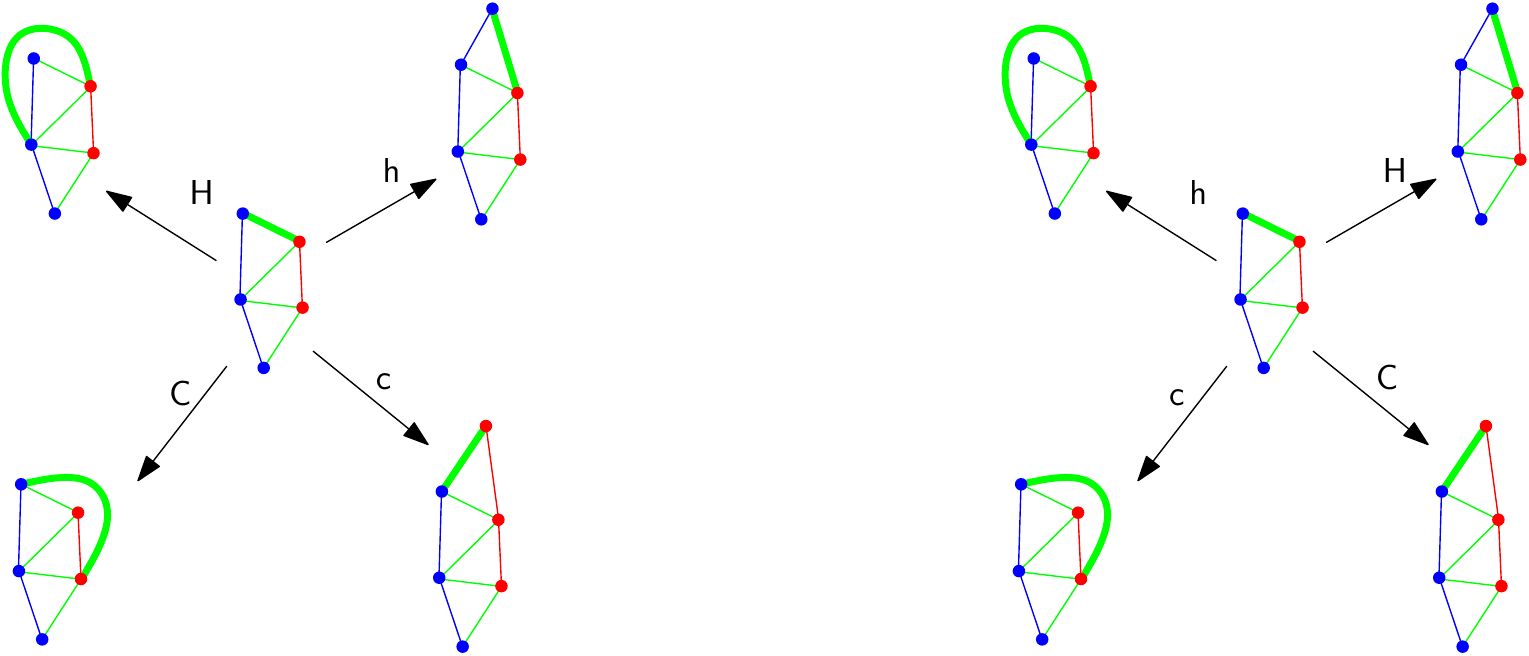}
  \caption{From symbols to map. The current position of the interface (or last discovered refined edge) is indicated with a bold line. 
  Left: reading the word sequence from left to right or \emph{into the future}. The map in the center is formed from the 
symbol sequence $\ac\ah\ac$. 
Right: The corresponding operation when we go from right 
to left (or into the \emph{past}). The map in the center now corresponds to the 
symbol sequence $\aC\aH\aC$.}
  \label{fig:map_from_word}
\end{figure}

Reversing the procedure when there is no $\aF$ symbol is pretty obvious (see e.g. \cref{fig:map_from_word}). 
 So we discuss the general case straight away. 
 The inventory interpretation of the symbol $\aF$ is the following:
this corresponds to a customer
demanding the freshest or the topmost burger in the stack irrespective
of the type. In particular, whether an $\aF$ symbol corresponds to a
hamburger or a cheeseburger order depends on the topmost burger type
at the time of the order. Thus overall, we can think of the inventory
process as a sequence of symbols in $\Theta$ with the following
reduction rules
\begin{itemize}
\item $\ac \aC = \ac \aF = \ah \aH = \ah \aF = \emptyset$,
\item $\ac\aH = \aH \ac$ and $\ah \aC = \aC \ah$.
\end{itemize}
Given a sequence of symbols $X$, we denote by $\bar X$ the reduced
word formed via the above reduction rule.

Given a sequence $X$ of symbols from $\Theta$, such that $\bar X =
\emptyset$, we can construct a decorated map $(\bm_n,\bt_n)$
as follows. First convert all the $\aF$ symbols to either a $\aH$ or
an $\aC$ symbol depending on its order type. Then construct a spanning
tree decorated map as is described above
(\cref{fig:map_from_word}). The condition $\bar X = \emptyset$ ensures
that we can do this. To obtain the loops, simply switch the type of
every quadrangle which has one of the triangles corresponding to an $\aF$
symbol. That is, if a quadrangle formed by primal triangles has one of
its triangles coming from an $\aF$ symbol, then replace the primal map
edge in that quadrangle by the corresponding dual edge and vice
versa. The interface is now divided into several loops and the number
of loops is exactly one more than the number of $\aF$ symbols.

\paragraph{Generating FK-weighted maps.} Fix $p \in [0,1/2)$. Let $X_1,\ldots, X_n$ be i.i.d.\ with the
following law

\begin{equation}
  \label{eq:3}
  \P( \ac) = \P( \ah ) = \frac14, \P( \aC ) = \P ( \aH ) = \frac{1-p}{4}, \P( \aF ) = \frac p2.
\end{equation} 
conditioned on $\overline{X_1,\ldots, X_n} = \emptyset$. 

Let $(\bm_n, \bt_n)$ be the random associated decorated map as above. Then observe that since $n$ hamburgers and cheeseburgers must be produced, and since $\#\aH + \#\aC = n - \# \aF$,
\begin{align}
  \label{eq:2}
  \P((\bm_n,\bt_n)) & = \left(\frac14\right)^n  \left( \frac{1-p}4\right)^{\#\aH + \#\aC} \left(\frac{p}{2}\right)^{\#\aF}\nonumber  \\
  & \propto \left(\frac{2p}{1-p}\right)^{\# \aF} = \left(\frac{2p}{1-p}\right)^{\# \ell(\bm_n,\bt_n)-1} 
\end{align}
Thus we see that $(\bm_n,\bt_n)$ is a realisation of the critical FK-weighted cluster random map model with $\sqrt{q}
= 2p/(1-p)$. Notice that $p \in [0,1/2)$ corresponds to $q =
[0,4)$. From now on we fix the value of $p$ and $q$ in this regime. (Recall that $q=4$ is believed to be a critical value for many properties of the map).

\subsection{Local limits and the geometry of loops}\label{sec:local}
The following theorem due to Sheffield and made more precise later by Chen
\cite{She11,chen} shows that the decorated map $(M_n,T_n)$ has a local limit
as $n \to \infty$ in the local topology. Roughly two maps
are close in the local topology if the finite maps near a large
neighbourhood of the root are isomorphic as maps (see \cite{BeSc} for
a precise definition).

\begin{thm}[\cite{She11,chen}]
  \label{thm:Local}
Fix $p \in [0,1)$. We have
\[
(M_n,T_n) \xrightarrow[n \to \infty]{(d)}(M,T)
\]
in the local topology. 
\end{thm}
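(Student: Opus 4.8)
## Proof Plan for Theorem~\ref{thm:Local}

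The plan is to deduce the local limit of the decorated maps from the convergence of the underlying inventory-accumulation words, using Sheffield's bijection as the dictionary between the two. The starting point is the generating procedure described in the previous subsection: for fixed $p$, the decorated map $(M_n, T_n)$ has the same law as the map built from an i.i.d.\ word $(X_1, \ldots, X_n)$ with marginals given by \eqref{eq:3}, conditioned on the event $\overline{X_1, \ldots, X_n} = \emptyset$. First I would recall (or record) the fact, due to Sheffield, that as $n \to \infty$ this conditioned word converges to an infinite i.i.d.\ bi-directional word $(X_i)_{i \in \Z}$ with the same marginals \eqref{eq:3}; the point is that the conditioning $\overline{X_1,\ldots,X_n} = \emptyset$ becomes a vanishing constraint locally around a typical position, because the reduced word length is itself a random walk with i.i.d.\ increments that returns to the origin, and conditioning a mean-zero walk of length $n$ to end at $0$ does not affect the law in any finite window near a uniformly chosen time. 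Since the root of $(M_n,T_n)$ can be taken to be a uniformly chosen oriented edge (re-rooting invariance, noted after \eqref{eq:FKdef}), this uniform position corresponds, under the bijection, to a uniformly chosen position in the word.

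Next I would make precise the sense in which Sheffield's bijection is \emph{local}: the combinatorial structure of $(\bar\bm_n, \bar\bt_n)$ in a neighbourhood of the root triangle is a deterministic function of the symbols $X_i$ for $i$ ranging over a (random but a.s.\ finite) window around the root position. Concretely, to discover the $k$-neighbourhood of the root triangle one only needs to follow the space-filling path and its burger-matching forward and backward from the root until all $k$-close triangles and their identifications (which quadrangle each refinement edge belongs to, which diagonal is flipped) are determined; the matching of a burger produced at time $i$ with its order at time $j$ depends only on $(X_i, \ldots, X_j)$ and the push-down/pop dynamics of the stack, which is a local, monotone operation. One must check that the number of symbols needed to reconstruct the $k$-neighbourhood is almost surely finite both in the infinite-word model and, uniformly enough, in the conditioned finite model — this follows because the burger-matching time differences are a.s.\ finite (the walk $\overline{X_1 \cdots X_m}$ drifts to $+\infty$ in each coordinate type so every burger is eventually ordered and every order eventually matched) and from the exponential tail on the root degree established by Chen \cite{chen}.

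With these two ingredients the argument is a soft composition: the map $(M_n, T_n)$ rooted at a uniform oriented edge equals (in law) $\Phi_n(X_1, \ldots, X_n \mid \overline{\cdot} = \emptyset)$ for a measurable $\Phi_n$, the conditioned words converge in law (in the product topology on a neighbourhood of the uniform root position) to the infinite i.i.d.\ word, and $\Phi_n$ restricted to any finite neighbourhood agrees with the corresponding infinite-word reconstruction map $\Phi_\infty$ on an event of probability tending to $1$ (the event that the finitely many relevant symbols lie within $\{1, \ldots, n\}$ and that no boundary effect from the conditioning interferes). Hence $(M_n, T_n) \to (M,T) := \Phi_\infty((X_i)_{i\in\Z})$ in the local topology, with $(M,T)$ defined directly from the infinite word. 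The main obstacle — and the only genuinely delicate point — is the uniform control of the conditioning: one needs that conditioning the length-$2n$ reduced word to vanish does not distort the law of the symbols in the $o(n)$-sized window actually used to build the $k$-neighbourhood of the root. I would handle this via a local central limit / absolute continuity estimate for the two-dimensional walk counting net hamburgers and net cheeseburgers, showing the Radon--Nikodym derivative between the conditioned and unconditioned laws on any fixed finite window tends to $1$; this is where essentially all the work sits, and it is precisely the content already present in Sheffield's original treatment, so I would cite \cite{She11} for it rather than redo it.
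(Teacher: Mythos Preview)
Your overall architecture matches the paper's sketch: local convergence of the conditioned word around a uniform position to the bi-infinite i.i.d.\ sequence, combined with continuity of Sheffield's bijection in the local topology. The paper states this theorem as a cited result and gives only a brief sketch, so on structure you are aligned.

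The mechanism you propose for the word-convergence step, however, is not the right one and rests on a misconception. You describe the reduced word length as ``a random walk with i.i.d.\ increments'' and suggest handling the conditioning ``via a local central limit / absolute continuity estimate for the two-dimensional walk counting net hamburgers and net cheeseburgers.'' Neither is accurate: the increment of $|\overline{X_1\cdots X_m}|$ depends on the current stack state (an $\aF$ acts differently according to what is on top), and the event $\{\overline{X_1\cdots X_{2n}}=\emptyset\}$ is strictly stronger than the 2D net-count walk returning to the origin --- for instance $\aH\ac\aC\ah$ has zero net count of each type but reduces to $\aH\ah\neq\emptyset$. So a local CLT for that walk does not capture the conditioning. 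The paper's sketch, following Sheffield, uses a different and more robust mechanism: one shows that $\P(\overline{X_1\cdots X_{2n}}=\emptyset)$ decays only \emph{sub-exponentially} in $n$, and then invokes Cram\'er/Sanov large deviations to conclude that conditioning on a sub-exponential event cannot shift the (exponentially concentrated) empirical frequency of any fixed local pattern around a uniformly chosen position. This bypasses any CLT and does not require identifying the conditioning with a walk endpoint. You are right that this step is the crux and that one ultimately cites \cite{She11,chen} for it, but your description of what that citation contains should be corrected.
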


Furthermore, $(M,T)$ can be described by applying the obvious infinite version of Sheffield's bijection to the  bi-infinite i.i.d.\ sequence of symbols with
law given by \eqref{eq:3}.

The idea behind the proof of \cref{thm:Local} is the following. Let
$X_1,\ldots,X_{2n}$ be i.i.d.\ with law given by \eqref{eq:3}
conditioned on
$\overline{X_1\ldots X_{2n}} = \emptyset$. It is shown in
\cite{She11,chen} that the probability of $\overline{X_1\ldots X_{2n}} =
\emptyset$ decays sub exponentially. Using Cramer's rule one can deduce
that locally the symbols around a uniformly selected symbol from
$\{X_i\}_{1\le i \le n}$ converge to a bi-infinite i.i.d.\ sequence
$\{X_i\}_{i \in \Z}$ in law. The proof is now completed by arguing that the correspondence
between the finite maps and the symbols is continuous in the local
topology.

Notice that uniformly selecting a symbol corresponds to selecting a
uniform triangle in $(\bar M_n,\bar T_n)$ which in turn corresponds to
a unique refinement edge which in turn corresponds to a unique
oriented edge in $M_n$. Because of the above interpretation and the invariance under
re-rooting, one can think of the triangle corresponding to $X_0$ as
the root triangle in $(M,T)$. 

One important thing that comes out of the proof is that every symbol
in the i.i.d.\ sequence $\{X_i\}_{i \in \Z}$ has an almost sure unique
\textbf{match}, meaning that every order is fulfilled and every burger is
consumed with probability $1$. Let $\varphi(i)$ denote the match of
the $i$th symbol. Notice that $\varphi:\Z \mapsto \Z$ defines an
involution on the integers.

The goal of this section is to explain the connection between the
geometry of the loops in the infinite map $(M,T)$ and the bi-infinite sequence $\{X_i\}_{i \in \Z}$ of symbols with law
given by \eqref{eq:3}. For this, we describe an equivalent procedure
to explore the map associated to a given sequence, triangle by triangle in the refined map $(\bar
M,\bar T)$.  (This is again defined in the same way as its finite
counterpart: it is formed by the subgraph $T$, its dual $T^\dagger$
and the refinement edges.)

\paragraph{Loops, words and envelopes.} Recall that in the infinite (or whole-plane) decorated refined map $(\bar M,\bar T)$, each loop is encoded by a unique $\aF$ symbol in the bi-infinite sequence of symbols $(X_i)_{i \in \Z}$, and vice-versa. Suppose $X_i = \aF$ for some $i \in \Z$, and consider the word $W = X_{\ph(i) } \ldots X_i$ and the reduced word $\cR = \bar W$ (recall that $\ph(i)$ is a.s. finite). Observe that $\cR$ is necessarily of the form $\aH \ldots \aH$ or of the form $\aC\ldots \aC$ depending on whether $X_{\ph(i)} = \ac$ or $\ah$, respectively. These symbols can appear any number of times, including zero if $\cR = \emptyset$. 

A moment of thought shows therefore that $W$ encodes a decorated
submap of $(\bar M, \bar T)$ which we call the envelope of $X_i$,
denoted by $\ae(i)$ or sometimes $\ae(X_i)$ with an abuse of notation.
Furthermore, this map is finite and simply connected. Assume without
loss of generality that $\cR$ contains only $\aH$ symbols. Then the
boundary of this map consists a connected arc of $|\cR|  $ primal
edges and two green (refined) edges (see \cref{F:env}).  Note also
that this map depends only on the symbols $(X_j)_{\varphi(i) \le j \le
  i}$ (i.e., $W$ do not contain any $\aF$ symbol whose match is
outside $W$).

\medskip The complement of the triangles
corresponding to a loop in $(\bar M,\bar T)$ consists of one infinite
component and several finite components (there are several components if the loop forms fjords). Recall that the loop is a
simple closed cycle in the dual of the refined map, hence it divides
the plane (for any proper embedding) into an \emph{inside} component
and an \emph{outside} component.

\begin{defn}\label{def:area}
  Given a loop in the map $(\bar M,\bar T)$, the \textbf{interior of the
  loop}  is the portion of the map corresponding to
the triangles in the finite
component of its complement and lying completely inside the
loop. The rest of the triangles lie in the \textbf{exterior} of the loop. The \textbf{length of the loop} is the number of triangles
corresponding to the vertices (in the dual refined map) in the loop,
or equivalently, the number of triangles that the loop goes
through. The \textbf{area inside the
  loop} is the number of triangles in its interior plus the length of
the loop.
\end{defn}

\begin{figure}[h]
\begin{center}
\includegraphics[scale=.4]{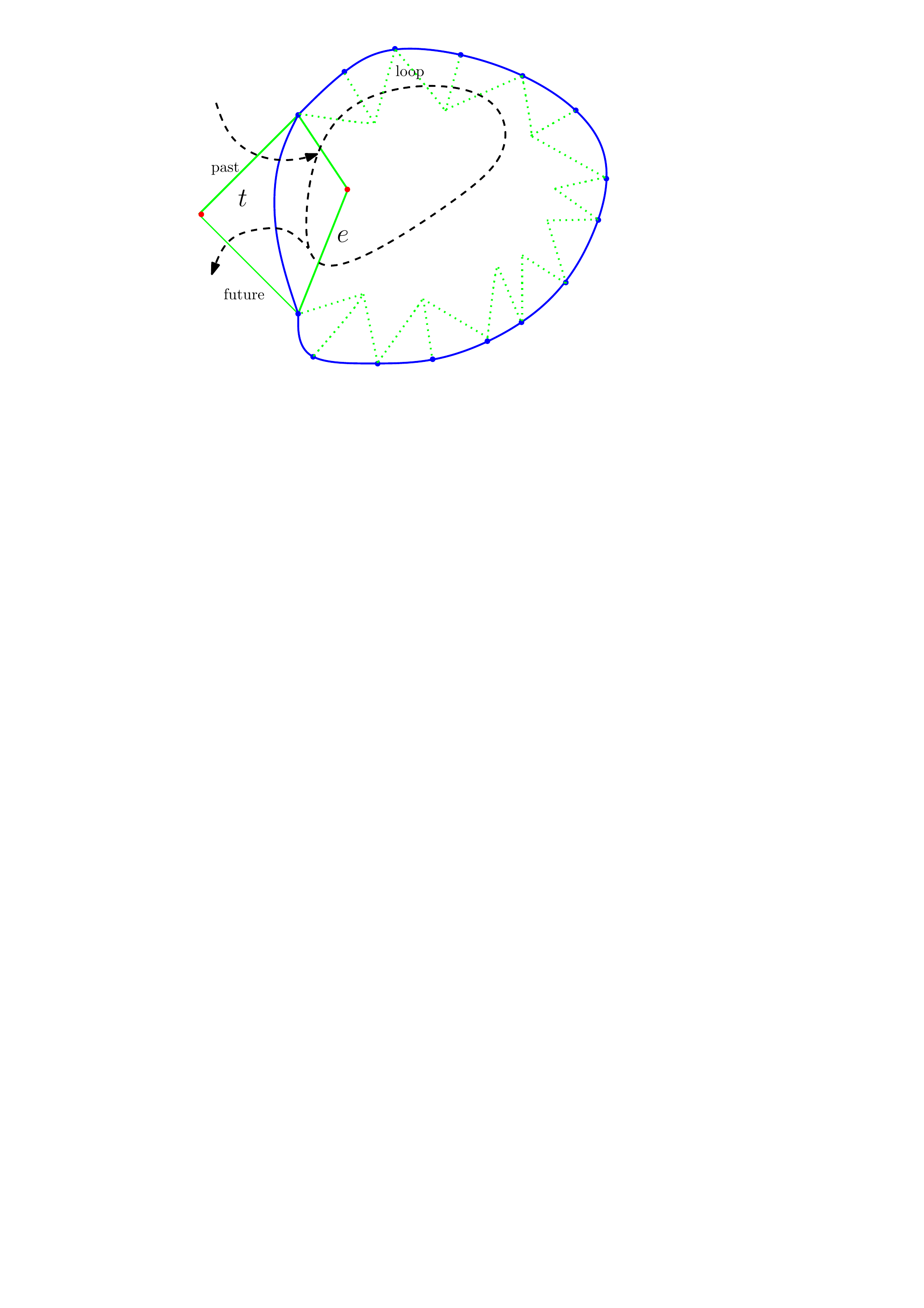}\quad \quad \quad \quad \quad
\includegraphics[scale=.4]{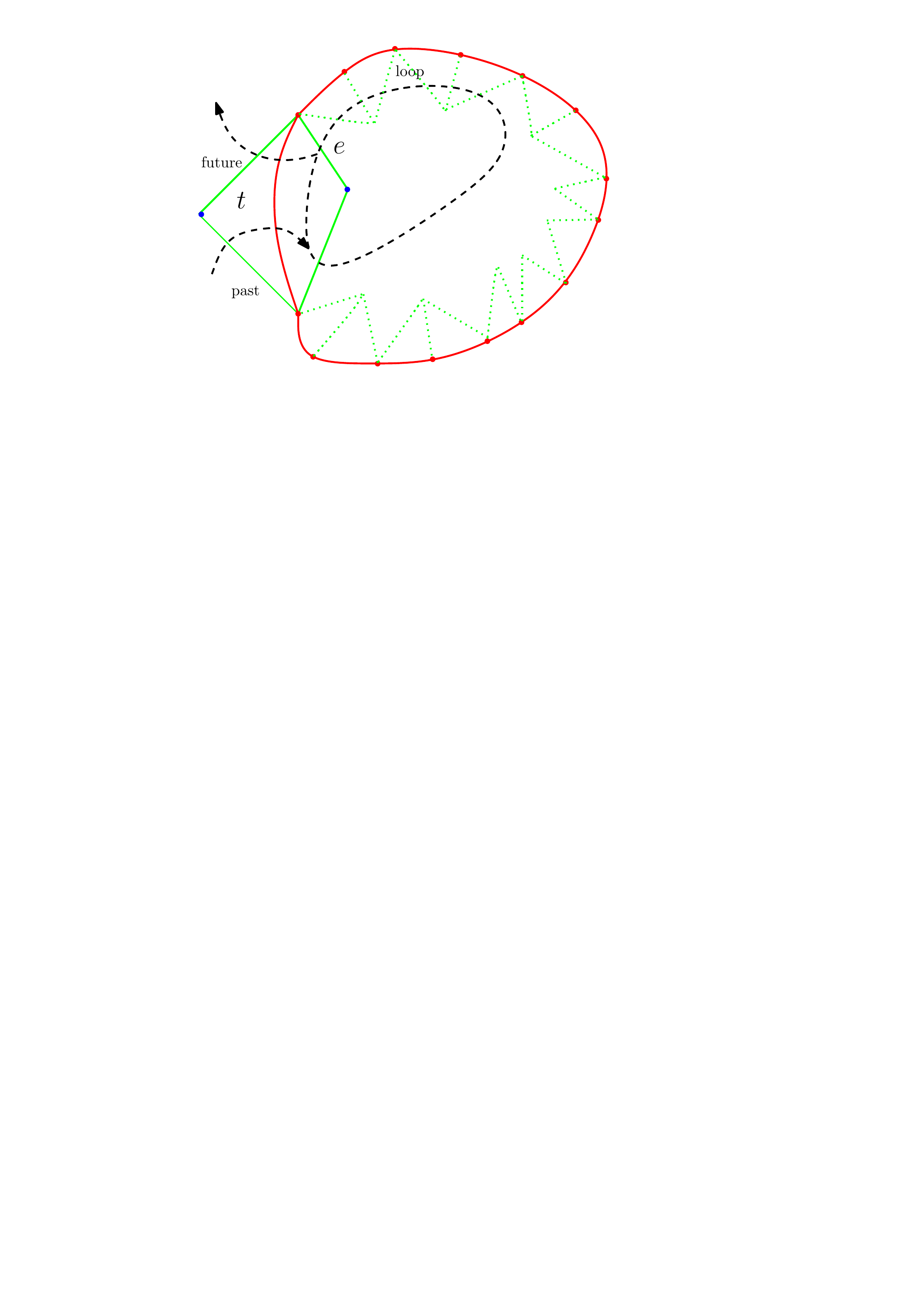}
\caption{The envelope of an $\aF$ symbol matched to a $\ac$. The green 
quadrangle corresponds to the $\aF$ and its match. All the other blue edges on 
the boundary of the envelope correspond to the symbols $\aH$ in the reduced 
word 
$\cR$. Note that not all triangles on the boundary of the envelope are part of 
the loop itself. Right: the corresponding map if the $\aF$ symbol is matched 
with an $\ah$.}
\label{F:env}
\end{center}
\end{figure}
\medskip We now describe an explicit exploration procedure of an envelope, 
starting from its $\aF$ symbol, and exploring towards the past. 

\paragraph{Exploration into the past for an envelope.} We start with a single 
edge $e$ and we explore the symbols strictly to the left of the $\aF$ symbol. 
At every step we reveal a part of the map incident to an edge which we explore.

\begin{enumerate}\label{enum}

 \item If the symbol is a $\aC,\aH$ or a $\ac,\ah$ which is not the match of 
the $\aF$, then we glue a single triangle to the edge we explore as in the 
right hand side of \cref{fig:map_from_word}.   

\item If the symbol is an $\aF$, we explore its envelope and glue the 
corresponding map as explained above (see \cref{F:env}). The refined edge 
corresponding to the ``future'' in \cref{F:env} is identified with the edge 
we are exploring and the edge corresponding to the ``past'' is the edge we 
explore next.

\item If the symbol is a $\ac$ or $\ah$ and is a match of the $\aF$ symbol we 
started with, we finish the exploration as follows. Notice that in this 
situation, if the symbol is a $\ac$ (or $\ah$) then the edge we explore is 
incident to $e$ via a dual (or primal) vertex. We now glue a primal (or dual) 
quadrangle with two of its adjacent refined edges identified with $e$ and the 
edge we explore. This step corresponds to adding the quadrangle with solid 
lines in \cref{F:env}. 
\end{enumerate}
\begin{remark}
 \label{rem:extension}
We remark that it is possible to continue the exploration procedure above for 
the whole infinite word to the left of $X_0$. The only added subtlety is that 
some productions have a match to the right of $0$ and hence remain uneaten. 
The whole exploration thus produces a half planar map with boundary formed 
by these uneaten productions. However this information does not reveal all the 
decorations in the boundary since some of the boundary triangles might be 
matched by an $\aF$ to the right of $X_0$. 
\end{remark}
We now explain how to extract information about the
length and area of the loop given the symbols in an envelope. A
preliminary observation is that the envelopes are nested. More
precisely, if $X_i = \aF$ and $X_j = \aF$ for some $\varphi(i ) <j
<i$. Then $\ae(X_j) \subset \ae(X_i)$. To see this, observe that a
positive number of burgers are
 produced between $X_{\varphi(i)}$ and $X_i$ and hence one of them
 must match $X_j$. Since it cannot be $X_{\varphi(i)}$ by definition,
 $\varphi(j) > \varphi(i)$.

 If we define a partial order
among the envelopes strictly contained in $\ae (X_i)$ then there exist maximal
elements which we call \textbf{maximal envelopes} in $\ae(X_i)$. 
\begin{lemma}\label{prop:loop+area}
Suppose $X_i = \aF$, and let $L$ be the corresponding loop. Then the following holds.
  \begin{itemize}
\item The boundary of $\ae(X_i)$, that is the triangles in $\ae(X_i)$
  which are adjacent to triangles in the complement of $\ae(X_i)$, consists of
  triangles in the reduced word $\overline {X_{\ph(i)} \ldots
    X_i}$, plus one extra triangle (corresponding to $t$ in \cref{F:env}). For an $\ah\aF$ loop, the boundary consists of dual
  triangles corresponding to $\aC$  symbols.  An identical statement
  holds for an $\ac\aF$ loop with dual replaced by primal. 

\item Let $\mathcal M $ denote the union of the maximal envelopes in
  $\ae(X_i)$ and let $m$ denote the number of maximal envelopes in $\ae(X_i)$. Then the length of $L$ is $m$ plus the number of
  triangles in $\ae(X_i) \setminus \cM$ minus $1$.
\item All the envelopes in $\cM$ of type opposite (resp. same) to that of $L$ belong to the interior (resp. exterior) of $L$.
  \end{itemize}
 
\end{lemma}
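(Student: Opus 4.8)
\emph{Proof idea.} The plan is to establish all three items at once, by induction on the number of $\aF$ symbols lying strictly between $\ph(i)$ and $i$, following the recursive exploration‑into‑the‑past procedure described just above and using the nesting of envelopes. Throughout I will use that $\ah,\aH$ sit on primal triangles and $\ac,\aC$ on dual ones, write $W=X_{\ph(i)}\cdots X_i$ and $\cR=\bar W$, and recall that the type of $L$ records whether $X_{\ph(i)}=\ac$ or $\ah$, equivalently whether $\cR$ is a word of $\aH$'s or of $\aC$'s; for a nested $\aF$ symbol $X_j$ write $\cR_j$ for its reduced word.

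\textbf{First and second items.} For the first item, the base case (no interior $\aF$) is immediate: $\ae(X_i)$ is then assembled by attaching one triangle for each intermediate symbol (step (1)) and one closing quadrangle (step (3)), so the only triangles still carrying an exposed boundary edge are the extra triangle $t$ together with those carrying an order of $W$ never matched inside $W$ --- precisely the symbols of $\cR$, which are primal exactly when $\cR$ is a word of $\aH$'s, i.e.\ when $X_{\ph(i)}=\ac$. In the inductive step, step (2) replaces one exposed edge by the boundary arc of a maximal sub‑envelope $\ae(X_j)$, which by induction is $|\cR_j|$ triangles of a single type plus one extra triangle; a bookkeeping of the reduction rules then shows the surviving list of exposed triangles of $\ae(X_i)$ is once more that of $\cR$, of the asserted type. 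The second item follows by splitting $\ae(X_i)$ into $\cM$ and the remaining ``spine'' triangles (the triangle of $X_i$, the intermediate non‑$\aF$ triangles outside $\cM$, and the two closing‑quadrangle triangles), and checking that $L$ passes through the triangle of $X_i$, meets each maximal envelope in exactly one triangle --- the exploration path enters and leaves $\ae(X_j)$ only through its two green edges, and the loop of $\ae(X_j)$ is distinct from $L$ --- and meets every spine triangle except one of the two closing‑quadrangle triangles; comparing with the first item yields the count $m+|\ae(X_i)\setminus\cM|-1$.

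\textbf{Third item.} Fix a maximal envelope $\ae(X_j)\in\cM$, with loop $L_j$. By the first item (applied to $X_j$) its boundary arc consists of $|\cR_j|$ triangles, all primal if $\ae(X_j)$ is of $\ac\aF$ type and all dual if of $\ah\aF$ type, and along $L_j$ exactly one of the two bordering clusters reaches $\partial\ae(X_j)$ --- the primal one when $\ae(X_j)$ is a $\ac\aF$ loop --- the other being enclosed by $L_j$. Now follow, inside the exploration of $\ae(X_i)$, the triangle glued to each boundary order of $\ae(X_j)$: it carries the colour of $\cR_j$ and, since the maximal envelopes are pairwise disjoint and self‑contained, it lies in the spine $\ae(X_i)\setminus\cM$, being either a production in $(\ph(i),\ph(j))$ or (only when its colour agrees with that of $\cR$) a symbol surviving in $\cR$ and hence on $\partial\ae(X_i)$. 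Tracing such chains through the spine, $\ae(X_j)$ is attached to the cluster bordering $L$ whose colour is that of $\cR_j$: if $\ae(X_j)$ is of the type opposite to $L$ this is the cluster enclosed by $L$, and then --- using the second item, so that $L$ crosses $\ae(X_j)$ exactly once --- $\ae(X_j)$ lies, apart from that single triangle, in a finite component of the complement of $L$ on the inside, i.e.\ in the interior of $L$; if the types agree the cluster is the one not enclosed by $L$, so $\ae(X_j)$ hangs off $L$ as a fjord and lies in its exterior. Both conclusions are applications of the Jordan curve theorem to $L$ in $(\bar M,\bar T)$ once the colourings have been matched up.

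\textbf{Main obstacle.} The crux is precisely this last step: turning the combinatorial description of where the interpolating orders get matched into the topological statement about which side of the simple closed curve $L$ the sub‑envelope $\ae(X_j)$ falls on. This forces one to carry the bicolouring of the two clusters bordering each loop through the entire induction, to identify the single triangle each sub‑envelope shares with $L$, to rule out that a sub‑envelope's boundary connects instead to a cluster enclosed by some other sub‑loop, and to invoke the Jordan curve theorem in $(\bar M,\bar T)$ while correctly accounting for the fjords created by exactly the same‑type sub‑envelopes. The first two items, though conceptually routine, also require the recursive‑exploration induction to be set up carefully enough to be airtight.
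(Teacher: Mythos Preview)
Your approach is workable but considerably more elaborate than the paper's, and in particular you have misjudged where the difficulty lies.

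For the first item, no induction is needed. The paper observes in one line that the boundary of $\ae(X_i)$ consists precisely of those triangles whose symbols are matched \emph{outside} $[\ph(i),i]$; by definition this is exactly the reduced word $\overline{X_{\ph(i)}\ldots X_i}$. The type follows because if (say) $X_{\ph(i)}=\ah$, any surviving $\aH$ in the reduced word would have been matched by $X_{\ph(i)}$ itself, a contradiction, so only $\aC$'s survive. Your recursive ``bookkeeping of the reduction rules'' reproduces this, but the direct argument is immediate.

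For the second item your count agrees with the paper's, which also argues via the exploration: each step of type (1), (2), or (3) contributes exactly one triangle to $L$, and for a step of type (2) that triangle is the $t$ of the corresponding maximal envelope.

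The real divergence is the third item. You flag as the ``main obstacle'' the passage from combinatorics to topology and propose to resolve it by tracing chains of matches through the spine and appealing to the Jordan curve theorem. The paper avoids this entirely with a two-line colouring argument: every loop has primal vertices on one side and dual vertices on the other, and the type of the loop determines which side is the exterior (an $\ah\aF$ loop has dual vertices on its exterior). The same applies to the loop of each maximal sub-envelope. Since a sub-envelope is adjacent to $L$ and (apart from its triangle $t$) contains no triangles of $L$, the colour of the vertices it exposes to the outside forces which side of $L$ it lies on---matching types put it in the exterior, opposite types in the interior. No chain-tracing or explicit Jordan-curve invocation is needed; the bicolouring of the plane by $L$ does the work automatically. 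Your route would get there, but the paper's argument shows that what you identified as the crux is in fact the easiest part once the orientation convention is stated.
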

\begin{proof}
 The boundary of $\ae(X_i)$ is formed of symbols that are going to be matched by
 symbols outside $ [ \ph(i), i ]$. Thus by definition, the boundary consists of the triangles associated with the
 reduced word $\overline {X_{\ph(i)} \ldots X_i}$. Also for an
 $\ah\aF$ loop, the boundary consists of $\aC$ symbols only since if
 there was an $\aH$ symbol, it would have been a match of
 $\varphi(i)$. An identical argument holds for a $\ac\aF$ loop.

For the second assertion, suppose we start the exploration procedure for a loop 
going into the past as described above. For steps as in item $1$, it is clear 
that we add a single triangle to the loop. For steps as in item $2$, i.e. when 
we reveal the map corresponding to a maximal envelope $E$, we also add a single 
triangle to the loop. Indeed an envelope consists of a single triangle $t$ glued 
to a map bounded by a cycle of either primal or dual edges (see \cref{F:env}). 
If we iteratively explore $E$, $t$ is part of the quadrangle we add in 
step $3$ of the above exploration and it is the triangle $t$ which is added to 
the loop. For steps as in item $3$, we also add one triangle to the loop. This 
concludes the proof of the second assertion. 


Clearly, the triangles corresponding to a loop has primal vertices on
one side and dual vertices on the other side of the loop. Suppose $X_i$ is $\ah\aF$ type. Then, as for any such loop, it has dual (or $\aC$) vertices adjacent to its
exterior.  
For the same reason, every $\ah\aF$ type maximal
envelope in $\ae(X_i)$ must have dual (or $\aC$) vertices adjacent to its exterior. 
None of its triangles belong to the loop by the second assertion, and it is adjacent to $L$. So the only possibility is that it lies in its exterior.
The other case is similar, so the last assertion is proved.
\end{proof}
\section{Preliminary lemmas}

\subsection{Forward-backward equivalence}\label{sec:fbequiv}

In this section, we reduce the question of computing critical exponents on the decorated map to a more tractable question on certain functionals of the Hamburger Cheeseburger sequence coming from Sheffield's bijection. This reduction involves elementary but delicate identities and probabilistic estimates which need to be done carefully. By doing so we describe the length and area by quantities which have a more transparent random walk interpretation
and that we will be able to estimate in \cref{S:rw_estimate}. 



Modulo these estimates, we complete the proof of
\cref{T:typ} at the end of \cref{sec:fbequiv}.  From now on throughout the rest of the paper, we fix the following notations:
\begin{defn}\label{symbols}
Fix $p
  \in (0,1/2)$. Define 
$$
\theta_0 = 2\arctan\left(\frac{1}{\sqrt{1-2p}}\right); \quad p_0 =
\frac{\pi}{2\theta_0} = \frac {\kappa'} 8.
$$
\end{defn}
Note that the value of $p_0$ is identical to the one in \eqref{p0}
(after applying simple trigonometric formulae). Also assume throughout
in what follows that
$\{X_k\}_{k \in \Z}$ is an i.i.d.\ sequence given by \eqref{eq:3}.



For any $k \in \Z$, we define a \textbf{burger stack} at time $k$ to
be $((X_j)_{j \le k}: \varphi(j) >k)$ endowed with the natural
order it inherits from $\{X_k\}_{k \in \Z}$. The maximal element in a burger
stack is called the burger or symbol at the top of the stack. It is
possible to see that almost surely burger stack
at time $k$ contains infinite elements almost surely for any $k \in
\Z$ (see \cite{She11}).

Define $
T = \varphi(0)$, and
let 
$
E = 
\{ X_{\varphi(0)} = \aF\}
$. 
 Let $J_T =
\overline{X_0\ldots X_T}$. Let $|J_T|$ denote the number of symbols
in $\overline{X_0\ldots X_T}$. Let $\mathcal S_k$ denote the burger stack
at time $k$. Let $\P^{s}$ denote the probability measure $\P$
conditioned on $\cS_0 = s$. Note that conditioning on the whole past $\{X_j\}_{j \le k} $ at a given time $k$ is equivalent to conditioning just on the burger stack $s$ at that time. 

\begin{thm}\label{thm:exit}
Let $T,E,J_T$ be as above and $p_0 = \kappa'/8$ be as in \cref{symbols}. Fix $\ve >0$. 
There
exist positive constants $c=c(\ve),C=C(\ve)$ such that for all $n \ge 1, m 
\ge n (\log n)^3$, for any burger stack $s$,
\begin{enumerate}[{(}i{)}]

\item \label{item1}$\frac{c n^{2p_0}}{n^{1+\ve}m^{4p_0+\ve}} \le 
\P^s(T>m^2,|J_T| = n,E) \le 
\frac{Cn^{2p_0}}{n^{1-\ve}m^{4p_0-\ve}}$,

\item \label{item2}$\frac{c}{n^{2p_0+1+\ve}} \le \P^s(|J_T| = n,E) \le 
\frac{C}{n^{2p_0+1-\ve}}$,

\item \label{item3}$\frac{c}{m^{2p_0 + \ve}} \le \P^s(T > m^2 \cap E ) \le
\frac{C}{m^{2p_0 - \ve}}$,

\item \label{item4}$c\left(\frac{n^{4p_0-\ve}}{m^{4p_0+\ve}}\right) \le 
\P^{s}(T>m^2 \big| |J_T |= n \cap E ) \le 
C\left(\frac{n^{4p_0+\ve}}{m^{4p_0-\ve}}\right)$,


\item \label{mainitem} For any $p\in  (0,2p_0-\ve)$,

\begin{equation*}
 c n^{2p-2\ve} \le \E^s(T^p \big| |J_T| = n\cap E) \le C n^{2p+2\ve}.
\end{equation*}
\end{enumerate}
In particular all these bounds are independent of the conditioning on
$\mathcal S_0 =s$.
\end{thm}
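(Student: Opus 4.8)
The whole theorem reduces to one calculation: identifying the asymptotics of the probability that a random walk bridge-type quantity associated with the Hamburger--Cheeseburger sequence behaves in a prescribed way, and the key is that the reduced word $J_T = \overline{X_0\ldots X_T}$ and the ``time'' $T = \varphi(0)$ can be read off from a two-dimensional random walk. Following Sheffield, I would encode the sequence $(X_k)$ by the walk $Z_k = (\aH_k - \ac_k,\, \aC_k - \ah_k)$ where the coordinates count, with signs, the net hamburger and cheeseburger demand; on the event $E = \{X_{\varphi(0)} = \aF\}$, the quantities $T$ and $|J_T|$ become, respectively, the first time the walk exits a cone (or equivalently the first time one coordinate returns below a running minimum) and the value of the other coordinate at that time. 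The drift is zero and the covariance structure of $Z$ is explicit in terms of $p$; the relevant cone angle is exactly $\theta_0$, which is why $p_0 = \pi/(2\theta_0)$ governs everything. The heart of the matter is then the classical theory of planar Brownian motion (or zero-drift random walk) in a cone: the probability that such a process survives in a cone of angle $\theta_0$ up to time $t$ decays like $t^{-\pi/(2\theta_0)} = t^{-p_0}$, and the joint law of (exit time, position at exit) has a density computable from the eigenfunction expansion in the cone.

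\textbf{Key steps, in order.} First I would recall from the earlier sections (and from \cite{She11}) the exact correspondence translating $T$, $|J_T|$, $E$ and the conditioning on $\cS_0 = s$ into events for the walk $Z$; in particular I would note that conditioning on the burger stack $s$ only affects the walk through a bounded-length initial segment, which is why the final bounds are uniform in $s$ (this is the ``in particular'' clause). Second, I would set up the invariance principle: $Z_{\lfloor nt\rfloor}/\sqrt{n}$ converges to a planar Brownian motion with the explicit covariance, and under a linear change of coordinates the relevant region becomes a genuine cone of opening angle $\theta_0$. Third, I would invoke (or reprove in the discrete setting via a martingale / harmonic-function argument, as the remark in the introduction advertises) the cone-exponent estimates: $\P(\text{survival up to time } t) \asymp t^{-p_0}$, giving item (\ref{item3}); the local version controlling the exit position, giving items (\ref{item1}) and (\ref{item2}); the ratio then gives (\ref{item4}); and (\ref{mainitem}) follows by integrating the tail bound (\ref{item4}) against $p\, t^{p-1}\,dt$ over $t \in [1,\infty)$, which converges precisely because $p < 2p_0$. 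The $\ve$-losses in all exponents come from replacing the sharp polynomial asymptotics by crude upper/lower bounds valid for all $n$ and all $m \ge n(\log n)^3$, and from handling the moderate-deviation regime where $m^2$ and $n$ are comparably large.

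\textbf{Main obstacle.} The genuinely delicate part is the \emph{joint} control of $T$ and $|J_T|$ in item (\ref{item1}), uniformly over the stated range $m \ge n(\log n)^3$: one needs not just that the walk exits the cone near time $m^2$, but simultaneously that the surviving coordinate is of order $n$ at that time, and that the reduced word has exactly $n$ symbols rather than $\Theta(n)$. This requires a local central limit theorem for the walk conditioned to stay in the cone — i.e.\ control of the transition density of the cone-confined walk, not merely of the survival probability — together with an argument that the last step across the boundary contributes only a negligible correction to $|J_T|$. The logarithmic cushion $m \ge n(\log n)^3$ is exactly what lets a crude Gaussian-type bound absorb the error terms here; pushing to the sharp regime (removing the $\ve$, or the log factors) is what \cite{GMS} do by heavier machinery, and I would not attempt it. A secondary technical point is verifying that the conditioning $\cS_0 = s$ really is innocuous: one must check the walk can be coupled, after an $O(1)$ (in probability, with exponential tails) number of steps, to the unconditioned walk, so that all constants can be taken independent of $s$ as claimed.
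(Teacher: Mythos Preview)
Your high-level picture is right: encode by a two-dimensional walk, pass to a cone of angle $\theta_0$, and extract the exponent $p_0=\pi/(2\theta_0)$. But two of your specific identifications diverge from what actually makes the argument work, and one of them is a genuine gap.

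\textbf{The translation of $\{|J_T|=n\}\cap E$.} You describe $|J_T|$ as ``the value of the other coordinate at that time'' and then worry that you need a local CLT for the cone-confined walk to pin that value down exactly. The paper's translation (its \cref{prop:same_event}) is sharper and sidesteps this entirely: on $\{X_0=\ac\}$, the event $E\cap\{|J_T|=n\}$ is \emph{exactly} the event that the transformed walk $\bd$ leaves the translated cone $\cC_n(\theta_0)$ for the first time through its \emph{tip}, with the next symbol an $\aF$. So item (\ref{item1}) is not a density estimate for the exit position; it is the probability of a ``hit-the-tip'' event for a cone with tip at distance $\asymp n$ from the starting point. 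This is then decomposed into three pieces---reach distance $n/2$ before leaving the cone (cost $n^{-1+o(1)}$, a half-plane estimate), go from scale $n$ to scale $m$ inside the cone (cost $(n/m)^{2p_0+o(1)}$), and return from scale $m$ to the tip (cost $m^{-2p_0+o(1)}$)---and each piece is handled by applying optional stopping to a perturbed cone-harmonic function $r^{\pm 2p_0\pm O(\ve)}\sin(\cdot)$ evaluated along a coarse-grained walk (\cref{lem:sub_super_mart}, in the spirit of McConnell). No local CLT is used; the $\ve$'s come from the perturbation needed to turn exact harmonic functions into strict sub/super-harmonic ones for the discrete walk.

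\textbf{Uniformity in the stack $s$.} Your proposed mechanism---that $s$ only affects an $O(1)$ initial segment, after which one couples to the unconditioned walk---is incorrect. The stack $s$ influences the walk at \emph{every} time an $\aF$ symbol reaches into negative indices for its match, which can happen arbitrarily far into the future. The paper's mechanism is different: Sheffield's estimate (his Lemma~3.7, restated here as \cref{lem:F}) says that the number of $\aF$'s in the reduced word up to time $n$ is $o(\sqrt n)$ in probability, so two walks built from the same symbol sequence but different initial stacks differ by $o(\sqrt n)$ uniformly on $[0,n]$. This is exactly what is needed to make the sub/supermartingale property in \cref{lem:sub_super_mart} uniform in $s$, and it is where the work goes.

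So your outline would not close as written: the local-CLT route is both unnecessary and harder than what is actually done, and the stack-uniformity argument you sketch does not hold. The derivations of (\ref{item3}), (\ref{item4}), (\ref{mainitem}) from (\ref{item1})--(\ref{item2}) are as you say.
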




\begin{remark}
 A finer asymptotics than (iii) above is obtained in
 \cite[Proposition 5.1]{GMS}. More precisely, it is proved that
 $\P(T>n\cap E) $ is regularly varying with index $p_0 =
 \kappa'/8$.
\end{remark}

Let us admit \cref{thm:exit} for now and let us check how this implies \cref{T:typ}. To do this we need to relate $T, E$ and $J_T$ to observables on the map.
We now check some useful invariance properties which 
use the fact that there are various equivalent ways of defining a typical loop.

\begin{prop}\label{prop:typical_law}
The following random finite words have the same law. 
\begin{enumerate}[{(}i{)}]
\item The envelope of the first $\aF$ to the left of $X_0$. That is
  $\ae(X_i)$ where $i = \max\{j \le 0,X_j = \aF\}$.\label{typ1}
\item The envelope of the first $\ac$ or $\ah$ to the right of $0$
  matched with an $ \aF$. That is $\ae(\varphi(\Sigma))$ where $ \Sigma = \min 
\{j\ge 0:X_j \in
  \{\ac,\ah\},X_{\ph(j)} = \aF\}$.\label{typ2}
\item The envelope of $X_0 $ conditioned on
  $X_0$ being an $ \aF$.\label{typ3}
\item The envelope of $X_{\ph(0)}$, conditioned on $X_{\ph(0)} = 
\aF$.\label{typ4}
\end{enumerate}
Furthermore this is the limit law as $n \to \infty$ for the envelope of a $\aF$ taken
uniformly at random from an i.i.d. sequence $X_1,\ldots, X_{2n}$
distributed as in \eqref{eq:3} and conditioned on $\overline{X_1\ldots X_{2n}} = \emptyset$.
\end{prop}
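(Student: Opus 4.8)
The plan is to establish the claimed equalities in law one at a time, exploiting the translation invariance of the i.i.d.\ sequence $(X_k)_{k\in\Z}$ together with the combinatorial structure of the Hamburger--Cheeseburger bijection. The key general principle, used repeatedly, is that the envelope $\ae(X_i)$ of an $\aF$ symbol at position $i$ depends only on the \emph{finite} block $(X_j)_{\varphi(i)\le j\le i}$, and that within such a block the relative positions of the $\aF$ and its match, and the reduced word, are governed by local rules that are invariant under shifting.

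First I would set up notation: for an $\aF$ at position $i$, write $L(i)=i-\varphi(i)$ for the number of symbols strictly between the match and the $\aF$ (plus one), and observe that $\ae(X_i)$ is a deterministic function of $(X_{\varphi(i)},\ldots,X_i)$. To prove \eqref{typ1}$=$\eqref{typ3}, I would condition on $\{X_0=\aF\}$ and use translation invariance: conditionally on the event that position $0$ carries an $\aF$, the law of $(X_{\varphi(0)},\ldots,X_0)$ equals the law of $(X_{\varphi(i)},\ldots,X_i)$ conditioned on $i=\max\{j\le 0:X_j=\aF\}$ being the position in question — here one must check that conditioning on ``$0$ is an $\aF$'' versus ``$i$ is the first $\aF$ to the left of $0$ and we look at $\ae(X_i)$'' give the same thing, which follows because the first-$\aF$-to-the-left position, shifted to the origin, has i.i.d.\ symbols to its left and the envelope never looks to the right of the $\aF$. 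The equality \eqref{typ3}$=$\eqref{typ4} is the forward--backward symmetry of Sheffield's bijection: reversing the bi-infinite word and swapping lowercase/uppercase (i.e.\ $\ah\leftrightarrow\aH$, $\ac\leftrightarrow\aC$, $\aF\leftrightarrow\aF$) is a measure-preserving transformation of the law \eqref{eq:3} (since $\P(\ah)=\P(\ac)=1/4$ and $\P(\aH)=\P(\aC)=(1-p)/4$ are preserved and $\aF$ is self-dual), and this transformation maps the envelope of $X_0$ (explored into the past) to the envelope of $X_{\varphi(0)}$ (explored into the future), conditioned appropriately. For \eqref{typ1}$=$\eqref{typ2} I would argue that both describe ``the envelope of a generic $\aF$ loop'': the first $\aF$ to the left of $0$ and the first burger to the right of $0$ eaten by an $\aF$ both select, after recentering, an $\aF$ together with its matching burger in a way whose conditional law of the intervening block is the same — this is essentially a renewal/stationarity argument, noting that $\Sigma$ in \eqref{typ2} lands on the match $X_{\varphi(\Sigma)}$ of some $\aF$, and the pair (match, $\aF$) seen from the match has the same joint law as seen from the $\aF$ by reversing.

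For the final sentence — that this common law is the $n\to\infty$ limit of the envelope of a uniformly chosen $\aF$ in the conditioned finite word — I would invoke \cref{thm:Local} and its proof: the conditioned finite word, viewed around a uniformly chosen symbol, converges in law to the bi-infinite i.i.d.\ sequence, and the event that the chosen symbol is an $\aF$ together with the finite block $(X_{\varphi(i)},\ldots,X_i)$ encoding its envelope is a local, a.s.\ continuous function of the sequence (finiteness of $\varphi$ a.s.\ and the subexponential decay of $\P(\overline{X_1\cdots X_{2n}}=\emptyset)$ from \cite{She11,chen} let one transfer the limit through the conditioning). So the uniform-$\aF$ envelope converges to the envelope of $X_0$ conditioned on $X_0=\aF$, which is case \eqref{typ3}.

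The main obstacle I expect is the careful bookkeeping in the equalities \eqref{typ1}$=$\eqref{typ2} and \eqref{typ3}$=$\eqref{typ4}: one has to be precise about \emph{what} is being conditioned on and check that no hidden bias toward longer or shorter envelopes is introduced when one ``recenters'' at a selected $\aF$ versus at a selected matching burger, and the forward--backward symmetry must be applied to exactly the right finite block (the issue flagged in \cref{rem:extension}, that boundary symbols of a one-sided exploration may be matched by $\aF$'s on the other side, does not arise here because we only ever look at the closed block between an $\aF$ and its match, but verifying this cleanly requires the nesting observation preceding \cref{prop:loop+area}). Everything else is a routine consequence of translation invariance of \eqref{eq:3} and the locality of the envelope map.
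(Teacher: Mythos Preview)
Your proposed argument for \eqref{typ3}$=$\eqref{typ4} has a genuine error: the map ``reverse the word and swap $\ah\leftrightarrow\aH$, $\ac\leftrightarrow\aC$'' is \emph{not} measure-preserving for the law~\eqref{eq:3}. Under that swap the symbol probabilities become $\P(\ah)=(1-p)/4$ and $\P(\aH)=1/4$, which differs from the original unless $p=0$. (You may be confusing this with the hamburger--cheeseburger swap $\ah\leftrightarrow\ac$, $\aH\leftrightarrow\aC$, which \emph{is} a symmetry but is irrelevant here.) Since your argument for \eqref{typ2} then builds on \eqref{typ4}, that case is also left unproved.

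The paper avoids any symmetry argument by computing the common law explicitly. Let $\cW$ be the set of finite words that end with an $\aF$ and begin with its match, and set $p(w)=\prod_i \P(X=w_i)$. The point is that in each of the four constructions, the resulting random word $w$ has probability exactly $p(w)/Z$ with $Z=\sum_{w\in\cW}p(w)=\P(X_0=\aF)=p/2$. For \eqref{typ3} this is immediate; for \eqref{typ1} it follows as you said (the sequence to the left of the first $\aF$ is still i.i.d.). For \eqref{typ4}, the event $\{X_{\ph(0)}=\aF\}$ is literally the disjoint union $\sqcup_{w\in\cW}\{(X_0,\ldots,X_{|w|-1})=w\}$, so $\P(\ae(X_{\ph(0)})=w\mid X_{\ph(0)}=\aF)=p(w)/Z$ directly --- no reversal needed. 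Case \eqref{typ2} then follows from \eqref{typ4} by conditioning on which burger $\Sigma$ is: given that the $k$th burger produced after $0$ is the first eaten by an $\aF$, its envelope has law $p_\cW$ independently of $k$. Your argument for the final limit statement via local convergence is fine in spirit; the paper makes this quantitative via Sanov's theorem and the polynomial decay of $\P(\overline{X_1\cdots X_{2n}}=\emptyset)$.
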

\begin{proof}
Let $\cW$ be the set of finite words $\{w_0,\ldots,w_n, w_n = \aF, n = \ph(0)\}_{n \ge 0}$ of any length, that end with an
$\aF$ and start by its match. For $w=(w_0,\ldots,w_n) \in \cW$, let $p(w) =
\prod_{i=0}^{n}\P(X = w_i)$. Let $p_{\cW}(w) = p(w)/Z$ where $Z =
\sum_{w \in \cW} p(w)$. Clearly, $Z = \P(X=\aF) = p/2$, since $\sqcup_{w \in 
\cW} \{X_{-n} = w_0 , \ldots, X_0 = w_n \} = \{ X_0 = \aF\}$ and these events 
are disjoint.

Note that the word in the \cref{typ3} has a law given by $\P( e( X_0) = w | X_0 
= \aF) = (2/p) \prod_{i=0}^n \P(X = w_i) = p_{\cW}(w)$. This is also true of 
the word in the \cref{typ1}, since the law to the sequence to the left of
 the first $\aF$ left of $0$ is still i.i.d. 
 
 Similarly, for the word in the \cref{typ4}, conditioning on $X_{\ph(0)} = \aF$ 
is the same thing as conditioning on $\ae(X_{\ph(0)}) \in \cW$ hence it follows 
that the random word has law $p_{\cW}$ too. This then immediately implies the 
result in the \cref{typ2}, 
 since conditioned on the $k$th burger produced after 
time $0$ to be the first one eaten by an $\aF$, the envelope of the $k$th 
burger produced has law $p_{\cW}$, independently of $k$.
 

The final assertion is a consequence of polynomial decay of empty reduced word as
described in \cite{She11,chen} which we provide for
completeness. For $w \in \cW$ be a word with $k$ symbols. Let $N_w$ be the number of $\aF$
symbols in $X_1,\ldots,X_n$ such that its envelope is given by $w$. Let
$N_{\aF}$ denote the number of $\aF$ symbols  in $X_1,\ldots,X_n$. We
can treat both $N_{\aF}$ and $N_w$ as empirical measure of states $w$
and $\aF$ of certain Markov chains of length $n- k$ and $n$ respectively. By
Sanov's theorem,
\[
\P(|\frac {N_w}{n} - p(w)| >\ve) \le ce^{-c'n}\quad ; \quad \P(|\frac{N_{\aF}}{n} -
p/2|>\ve) \le ce^{-c'n}
\]
Since $\P(\overline{X_1 \ldots X_{2n}}) = \emptyset) =n^{-1-\kappa/4
  +o_n(1)}$ \cite{gms2}, our result follows. See for example
\cite{chen} for more precise treatment of similar arguments. 
%
\end{proof}
Let $(M_n,T_n)$ be as in \cref{eq:FKdef} and let $L_n$ be a uniformly
picked loop from it. One can extend the definition of length, area, exterior and interior in \cref{def:area} to finite maps by adding the convention that the exterior of a loop is the component of the complement containing the root.
 (If the loop intersects the root edge, we
define the interior to be empty.) Let $\cL_n$ be the submap of $(\bar
M_n,\bar T_n)$ formed by the triangles corresponding to the loop $L_n$
and the triangles in its interior. Recall that by definition, the length of the loop, denoted $\len(L_n)$ is the number
of triangles in 
$(\bar M_n,\bar T_n)$ present in the loop and the
area $\Area(L_n)$ is the number of triangles in $\cL_n$, that is, the number of triangles in the interior of the loop
plus $\len(L_n)$.

\begin{prop}\label{lem:typ_area_loop}
 The number of triangles in $\cL_n$ is tight and $\cL_n$ converges to
 a finite map $\cL$. The submap corresponding to triangles in $L_n$ converges to
 a map $L$. Also
 \begin{itemize}
 \item $\len(L_n) \xrightarrow[]{n \to \infty} \len(L)$
\item  $\Area(L_n) \xrightarrow[]{n \to \infty} \Area(L)$
 \end{itemize}
where $\len(L)$ is the number of triangles in $L$ and $\Area(L)$ is
the number of triangles in $\cL$. Further the law of $\len(L)$ and
$\Area(L)$ can be described as follows. Take an i.i.d.\ sequence
$\{X_i\}_{i \in \Z}$ as in \cref{eq:3} and condition on $X_0 =
\aF$. Then the map corresponding to $\ae(X_0)$ has the same law as
$\cL$. Thus the law of $\len(L)$ and $\Area(L)$ can be described in
the way prescribed by \cref{prop:loop+area}.
\end{prop}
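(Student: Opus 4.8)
The plan is to read everything off from the local convergence of the decorated map (\cref{thm:Local}) combined with the combinatorics of Sheffield's bijection and the structural facts in \cref{prop:loop+area} and \cref{prop:typical_law}. First I would note that, under Sheffield's bijection, a loop of $(M_n,T_n)$ chosen uniformly at random is (up to the vanishing-probability event that it is the root loop, for which $L_n$ is undefined) the loop encoded by an $\aF$ symbol chosen uniformly at random from the i.i.d.\ word $X_1,\ldots,X_{2n}$ conditioned on $\overline{X_1\ldots X_{2n}}=\emptyset$, and that the submap $\cL_n$ is a deterministic function of the envelope of that $\aF$. By the final assertion of \cref{prop:typical_law}, the (random, finite) envelope of such a uniform $\aF$ converges in law, as $n\to\infty$, to the envelope $\ae(X_0)$ of a bi-infinite i.i.d.\ sequence $\{X_i\}_{i\in\Z}$ with law \eqref{eq:3} conditioned on $\{X_0=\aF\}$ --- which is the description appearing in the statement.

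Next I would make the dependence of $\cL_n$, $\len(L_n)$ and $\Area(L_n)$ on the envelope word explicit. Running the ``exploration into the past'' procedure described just before \cref{prop:loop+area}, the envelope word reconstructs, triangle by triangle, a decorated submap of $(\bar M_n,\bar T_n)$; \cref{prop:loop+area} then identifies which of its triangles form the loop $L_n$ (one triangle from each maximal envelope, together with the triangles of $\ae(X_i)\setminus\cM$, minus one), which form its interior (the maximal envelopes of the type opposite to $L_n$, and everything they enclose), and which lie in its exterior (the remaining, same-type, maximal envelopes). Hence $\cL_n$, $\len(L_n)$ and $\Area(L_n)$ are explicit deterministic functions of the envelope word, depending only on finitely many symbols around the chosen $\aF$, and these functions are continuous for the local topology.

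For tightness I would use that the limiting envelope word $X_{\ph(0)}\ldots X_0$, conditioned on $X_0=\aF$, is almost surely finite, since $\ph(0)<\infty$ almost surely --- every symbol of the bi-infinite i.i.d.\ sequence has an almost surely unique match, as recalled after \cref{thm:Local}. Thus the limiting map $\cL$ has almost surely finitely many triangles, and convergence in law of the envelope words to this almost surely finite limit gives tightness of the number of triangles of $\cL_n$. It follows that $\cL_n$ converges in law to $\cL$ and the loop submap of $L_n$ to a map $L$; since $\len$ and $\Area$ depend only on $\cL_n$ and stabilise, $\len(L_n)\to\len(L)$ and $\Area(L_n)\to\Area(L)$ in law, with the limiting laws obtained by applying \cref{prop:loop+area} to $\ae(X_0)$ conditioned on $X_0=\aF$.

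The main obstacle is the combinatorial bookkeeping in the second step: checking that the geometrically defined $\cL_n$ --- the loop together with the triangles it \emph{encloses} --- really is the submap cut out of the envelope by \cref{prop:loop+area}, which requires correctly separating interior from exterior \emph{within} the envelope, handling fjords, and using that same-type maximal envelopes lie outside the loop while opposite-type ones lie inside. A secondary technical point is promoting the local convergence of $(M_n,T_n)$ to convergence of this specific finite submap: one must verify that ``the envelope word of a uniform $\aF$'' is a continuous functional for the local topology and that the tightness above rules out escape of mass --- which is precisely where the almost sure finiteness of $\ph(0)$ and the polynomial decay of $\P(\overline{X_1\ldots X_{2n}}=\emptyset)$ used in \cref{prop:typical_law} enter.
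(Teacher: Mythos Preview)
Your proposal is correct and follows essentially the same approach as the paper: identify a uniform loop with a uniform $\aF$ symbol in the conditioned word (handling the root loop as a vanishing-probability exception), then invoke the final assertion of \cref{prop:typical_law} for the convergence of envelopes and \cref{prop:loop+area} for the structural description of $\cL$, $\len$, and $\Area$. The paper's proof is considerably terser --- it simply notes the loop/$\aF$ correspondence and says ``the rest follows from the last statement in \cref{prop:typical_law,prop:loop+area}'' --- while you spell out the continuity and tightness arguments that the paper leaves implicit, but the substance is the same.
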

\begin{proof}
  Notice that there is a one to one correspondence between the number of
$\aF$ symbols in the finite word corresponding to $(M_n,T_n)$
except there is one extra loop. But since the number of $\aF$
symbols in the finite word converges to infinity, the probability that
we pick this extra loop converges to $0$. The rest follows from the last statement in \cref{prop:typical_law,prop:loop+area}.
\end{proof}

We now proceed to the proof of \cref{T:typ}. 
We compute each exponent separately. In this proof we will make use of certain 
standard type exponent computations for i.i.d.\ heavy tailed random variables. 
For clarity, we have collected these lemmas in \cref{A}.

\paragraph{Proof of length exponent in \cref{T:typ}.}  We see from \cref{lem:typ_area_loop} that it is enough to condition on $X_0 = \aF$
  and look at the length of the loop and area of the envelope
  $\ae(X_0)$ as defined in \cref{def:area}. We borrow the notations from
  \cref{lem:typ_area_loop}. We see from the second item of
  \cref{prop:loop+area}, to get a handle on $\len(L)$, we need to
  control the number of  maximal envelopes and the number of triangles
  not in maximal envelopes inside $\ae(X_0)$. To do this, we define a sequence
  $(c_n,h_n)_{n \ge 1}$ using the \emph{exploration into the past for an 
envelope} as described in \cref{sec:local} and keeping track of the number of 
$\aC$ and $\aH$ in the reduced word. Let $(c_0,h_0 ) =
  (0,0)$. Suppose we have performed $n$ steps of the exploration and defined 
$c_n,h_n$ and in this process, we have revealed triangles corresponding to 
symbols $(X_{-m}, \ldots, X_0)$.
We inductively define the following.
\begin{itemize}
\item If $X_{-m-1}$ is a $\aC$  (resp. $\aH$), define $(c_{n+1},h_{n+1}) = 
(c_n,h_n)
  + (1,0)$ (resp. $(c_n,h_n)
  + (0,1)$).
\item If $X_{-m-1}$ a $\ac$ (resp. $\ah$), $(c_{n+1},h_{n+1}) = (c_n,h_n)
  + (-1,0)$ (resp. $(c_n,h_n)
  + (0,-1)$).
\item If $X_{-m-1}$ is $\aF$, then we explore $X_{-m-2},X_{-m-3}\ldots$ until we 
find the
  match of $X_{-m-1}$. Notice that the reduced word
  $\cR_{n+1}=\overline{X_{\ph( -m-1)} \ldots X_{-m-1}}$ is either of the form 
$\aC \aC \ldots \aC $ or
  $\aH \aH \ldots \aH$ depending on whether the match of the $\aF$
  is a $\ah$ or $\ac$ respectively. Either happens with equal
  probability by symmetry. Let $|\cR_{n+1}|$ denote
  the number of symbols in the reduced word $\cR_{n+1}$. If $\cR_{n+1}$ consists 
of $\aH$ symbols,
  define $(c_{n+1},h_{n+1}) = (c_n,h_n)
  + (0,|\cR_{n+1}|)$. Otherwise, if $\cR_{n+1}$ consists of $\aC$ symbols 
define $(c_{n+1},h_{n+1}) = (c_n,h_n)
  + (|\cR_{n+1}|,0)$.
\end{itemize}
For future reference, we call this exploration procedure the \textbf{reduced 
walk}.


%

Observe that the time $\ph(0)$ where we find the match of 0 in the reduced walk
is precisely the time $n$ when the process $(c_n,h_n)$
leaves the first quadrant, i.e.,  $\tau :=
\inf\{k: c_k \wedge h_k<0\}$. This is because $\tau$ is the first step when
$\overline{X_{-\tau}\ldots X_{-1}}$ consists of a $\ac$ or $\ah$
symbol followed by a (possibly empty) sequence of burger orders of the
opposite type and hence the $\ac$ or $\ah$ produced is the match of
$\aF$ at $X_0$. Also from second item of  \cref{prop:loop+area}, $\tau$ 
is exactly the number of triangles in the loop (as exploring the envelope of each $\aF$ corresponds to removing the maximal envelopes in the loop of $X_0$).

We observe that the walk
$(c_n,h_n)$ is just a sum of i.i.d. random variables which are furthermore centered. Indeed, conditioned on the first coordinate being
changed, the expected change is $0$ via
\eqref{eq:3} and the computation by Sheffield \cite{She11} which
boils down to the fact that $\E( |\cR|) =1$ (this is the
quantity $\chi-1$ in \cite{She11}, which is 1 when $q\le 4$) 

Although the change in one coordinate means the other coordinate stays
put, estimating the tail of $\tau$ is actually a one-dimensional problem since the
coordinates are essentially independent. Indeed if instead of changing
at discrete times, each coordinate jumps in continuous time with a
Poisson clock of jump rate $1$, the two coordinates becomes
independent (note that this will not affect the tail exponent by standard 
concentration arguments). Let $\tau^c$ be the return time to $0$ of the first
coordinate. By this argument, $\P(\tau>k) = \P(\tau^c>k)^2$. 
Now, $|\cR|$ has the same distribution as  $J_{T}$ conditionally given $X_{\varphi(0)} = \aF$, 
by \cref{prop:typical_law} equivalence of \cref{typ3,typ4}. It is a standard 
fact that the return time of 
a heavy tailed walk with exponent $\alpha$ has exponent $1/\alpha$. In our 
slightly weaker context, we prove this fact in \cref{lem:return_heavy}. It 
follows 
that $\P(\tau^c> k) = k^{-1/(2p_0) + o(1)}$ and hence $\P( \tau > k) = k^{- 
1/p_0 + o(1)}$. This completes the proof of the tail asymptotics for the length 
of the
loop. \qed


\paragraph{Proof of area exponent in \cref{T:typ}.}  For the lower bound, let us condition on $X_0 = \aF$ and set $T'= - \varphi(0)$. Then we break 
up $T'$ as $T' = \sum_{n=1}^{\tau} (T^c_n +
T^h_n)$ defined as follows. In every reduced walk exploration step, if
the walk moves in the first coordinate, then $T^c_n$ denotes the number
of triangles explored in this step otherwise $T^c_n = 0$. 
Also $T^h_n$
is defined in a similar way. Hence $T_n^{c} + T_n^h$ counts the number of 
symbols explored in step $n$ of the reduced walk. Observe further that translating \cref{prop:loop+area} (third item) to this context and these notations, we have that $\Area(L)=\sum_{n=1}^\tau T_n^c$ or $\Area(L)=\sum_{n=1}^\tau T_n^h$ depending on which coordinate hits zero first (if $\tau = \tau^c$ then $\Area(L)=\sum_{n=1}^\tau T_n^c$ and vice-versa).

Now notice that $T_n^{c/h}$ 
has a probability bounded away from zero to make a jump of size at least $k$ in every 
$k^{p_0+\ve}$ steps, by \cref{thm:exit}. Hence using the Markov property and a union bound over 
cheese and hamburgers,
\begin{equation}
 \P( \sum_{i=1}^{k^{p_0+2\ve}} T^c_i \le k \text{ or }  
\sum_{i=1}^{k^{p_0+2\ve}} T^h_i\le k ) \leq e^{-ck^{\ve}}.
\end{equation}
Hence
\begin{equation}
 \P(\Area(L) > k) \ge \P(\Area(L) > k, \tau >k^{p_0+2\ve}) \ge \P(\tau 
>k^{p_0+2\ve}) - e^{- ck^\eps} \ge k^{-1 - 2\eps + o(1)}
\end{equation}

Now we focus on the upper bound. Since the coordinates are 
symmetric, it is enough to prove
\begin{equation}
 \P(\sum_{n=1}^{\tau^c} T_n^c > k, \, \tau=\tau^c) \le 
k^{-1+\ve + o(1)}.
\end{equation}
Since $\P(\tau>k^{p_0}) =k^{-1+ o(1) }$, we can further restrict ourselves to the case $\tau \le k^{p_0}$. 
Now roughly the idea is as follows.
When we condition on the event $\{\tau = \tau^c =j\}$ with $j \le k^{p_0}$, there are several ways in which the area can be larger than $k$. 

One way is if the 
the maximal jump size of $(|\cR_i|)_{1\le i \le j}$ 
is itself large, in which case there is a maximal envelope with a large boundary (and therefore a large area).

The second way is if the maximal jump size is small and the area manages to be large because of many medium size envelopes, but we are able to discard it by comparing a sum of heavy-tailed random variables to its maximum.

Therefore, the following third way will be the more common. We will see that the maximal jump size in $|\cR_i|$ is at most 
$j^{\frac1{2p_0}}$ with \emph{exponentially high probability}, even though the 
$\cR_i$ are heavy-tailed. Now, if the area is to be large (greater than $k$) and 
one maximal envelope contains essentially all of the area, then the area of that 
envelope will have to be big compared to its boundary. We handle this deviation 
by using a Markov inequality  with a nearly optimal power and \cref{mainitem} in \cref{thm:exit}.

\medskip We first convert the problem to a one-dimensional problem. To this end let 
$ \xi_n =  c_n- c_{n-1}$, i.e., we look at the 
jumps only restricted to the cheeseburger coordinate.
We now observe that on the 
event $\tau^c =k$, we have $\xi^*:=\sup_{n \le \tau^c}\xi_n \le 
k^{\frac{1}{2p_0}+\delta}$ with probability at least $1-k e^{-ck^{\delta}}$. 
To see this we use the following exponential left tail of sums of 
$\xi_n$ (see \cref{lem:left_tail} for a proof; in words, a big jump is exponentially unlikely on the event $\tau^c=j$ 
because if there is one, the walk has to come down  to $0$ very fast)
\begin{equation}
  \label{eq:39}
  \P(\sum_{n=1}^k \xi_n <-\lambda k^{\frac1{2p_0} +\delta}) \le 
2e^{-c(\delta)\lambda}.
\end{equation}
Using all this, it is enough to show, with $\delta = \eps/4$ say,
\begin{equation}
 \P(\sum_{n=1}^{\tau^c} T_n^c > k, \xi^* \le (\tau^c)^{\frac{1}{2p_0} + 
\delta},\, \tau=\tau^c \le k^{p_0}) 
\le k^{-1 + \eps + o(1)}.
\end{equation}
Let $T_j^* = \max_{1 \le n \le 
j} T_n^c$. Using Markov's inequality, for all $\ve>0,\delta=\eps/4$,
\begin{align}
 \P(\sum_{n=1}^{\tau^c} T_n^c > k, & \,\xi^* \le (\tau^c)^{\frac{1}{2p_0} + 
\delta},\, 
\tau=\tau^c \le k^{p_0})\nonumber\\  & \le 
\frac{C(\ve)}{k^{2p_0-\ve}}\sum_{j=1}^{k^{p_0}}\E\left((\sum_{n=1}^{j} 
T_n^c)^{2p_0-\ve}\mathbbm{1}_{\xi^* \le j^{\frac 1 {2p_0} + \delta}}, 
\mathbbm{1}_{\tau=\tau^c =j}\right)\nonumber\\
& \le 
\frac{1}{k^{2p_0-\ve}}\sum_{j=1}^{k^{p_0}} 
\E\left(\left(\frac{\sum_{n=1}^{j } 
T_n^c}{(T_j^*)^{1+\delta}}\right)^{2p_0-\ve}\left((T_j^*)^{1+\delta}\right)^{
2p_0 -\ve} \mathbbm { 1 } _ { \xi^* \le j^{\frac 1 {2p_0} + \delta}}, 
\mathbbm{1}_{\tau=\tau^c =j}\right)\label{eq:holder_trick}
\end{align}
It is a standard fact that for heavy tailed variables with infinite 
expectation, the sum is of the order of its maximum with exponentially high 
probability. This is stated and proved formally in \cref{lem:sum_max}. Using 
this fact, Holder's inequality and the fact that $(1+ \delta) (2p_0 - 
\eps) < 2p_0 - \eps/2$ we conclude that 
\begin{align}
 \P(\sum_{n=1}^{\tau^c} T_n^c > k, \,\xi^* \le (\tau^c)^{\frac 1 {2p_0} + 
\delta},\, 
\tau=\tau^c \le k^{p_0}) & \le 
\frac{C(\eps)}{k^{2p_0-\ve}}\sum_{j=1}^{k^{p_0}}\E\left((T_j^*)^{2p_0-\eps/4
}
\mathbbm { 1 } _ { \xi^* \le j^{\frac{1}{2p_0} + \delta}}, 
\mathbbm{1}_{\tau=\tau^c =j}\right)\nonumber\\
& \le 
\frac{C(\eps)}{k^{2p_0-\ve}}\sum_{j=1}^{k^{p_0}} \E \left(\sum_{1 
\le n \le j} (T_n^c)^{2p_0-\eps/4      }
\mathbbm { 1 } _ { \xi^* \le j^{\frac 1 {2p_0} + \delta}}, 
\mathbbm{1}_{\tau=\tau^c =j}\right)\nonumber
\end{align}
Now let $\cG$ be the $\sigma$-algebra generated by $(\cR_n)_{n \ge 0}$. 
Notice that $\tau^c, \tau^h$, $\xi^*$ are $\cG$-measurable and that $T_n^c$ is independent of 
$(\cR_i)_{i \neq n}$. Also notice from \cref{mainitem} of \cref{thm:exit} that 
$\E((T_n^c)^{2p_0-\eps/4}| \cG) \le C(\eps)|\cR_n|^{4p_0-\eps/4} 
\mathbbm{1}_{\xi_n>0}$. Thus 
we conclude
\begin{align}
  \P(\sum_{n=1}^{\tau^c} T_n^c > k, \,\xi^* \le (\tau^c)^{\frac 1 {2p_0} + 
\delta},\, 
\tau=\tau^c \le k^{p_0})  
& \le 
\frac{C(\eps )}{k^{2p_0-\ve}} \sum_{j=1}^{k^{p_0}}\E \left(\sum_{1 
\le n \le j} |\cR_n|^{4p_0- \eps/4}\mathbbm{1}_{\xi_n>0},
\mathbbm { 1 }_{ \xi^* \le j^{\frac 1 {2p_0} + \delta}}, 
\mathbbm{1}_{\tau=\tau^c =j}\right)\nonumber
\end{align}
Again using Holder and \cref{lem:sum_max} similar to \eqref{eq:holder_trick}, 
we can replace $\sum_{1 
\le n \le j} |\cR_n|^{4p_0-\eps/4}\mathbbm{1}_{\xi_n>0}$ by $(\xi^*)^{4p_0 - 
\eps/8}$ in 
the above expression and obtain that the right hand side above is at most (moving to continuous time to get independence of $\tau^c $ and $\tau^h$ as in the earlier proof of the length exponent),
\begin{align*}
 \frac{C(\eps)}{k^{2p_0-\ve}} \sum_{j=1}^{k^{p_0}}\E 
\left((\xi^*)^{4p_0-\eps/8} \mathbbm { 1 }_{ \xi^* \le j^{\frac 1 {2p_0} + \delta}}, \mathbbm{1}_{\tau=\tau^c =j}\right) 
& \le  \frac{C(\eps)}{k^{2p_0-\ve}} 
\sum_{j=1}^{k^{p_0}} j^{2+\eps}\P(\tau^c = j)\P(\tau^h 
>j) 
\\
& \le \frac{C(\eps)}{k^{2p_0-\ve}} 
\sum_{j=1}^{k^{p_0}}j^{1+2\ve-\frac1{p_0}}
\\
& \le \frac{C(\eps)}{k^{2p_0-\ve}} (k^{p_0})^{2-\frac{1}{p_0}+2\ve}  = k^{-1 + 3 
\eps + o(1)} 
\end{align*}
as desired. \qed
\subsection{Connection with random walk in cone}\label{sec:connection}
Given the sequence $\{X_{i}\}_{i \in
  \mathbb Z}$ and $\cS_0$, the burger stack at time $0$, we can 
construct the sequence 
$\{\hat X_i\}_{i \in \mathbb
  Z}$ where we convert every $\aF$ symbol in $\{X_{i}\}_{i \in
  \mathbb Z}$ into the corresponding $\aC$ symbol or $\aH$
symbol. 

\def\UU{U}
Define $(\UU^x_n)_{n \ge 1}$ to be the 
algebraic cheeseburger count as follows:
\begin{equation}\label{D}
\UU^x_i - \UU^x_{i-1} = 
\begin{cases}
+1&\text{ if }\hat X_i = \ac,\\
-1& \text{ if }\hat X_i = \aC,\\
0 & \text{ otherwise.}
 \end{cases}
 \end{equation}
Similarly define the hamburger count $\UU^y_i$ by letting its
increment $\UU^y_i - \UU^y_{i-1}$ be $\pm1$ depending whether $\hat
X_i = \ah, \aH$ or 0 otherwise.

Recall our notation $p$ defined in \cref{eq:3} so that $p /2= \P( \aF)$. The main result of Sheffield \cite{She11}, which we rephrase for ease of reference later on, is as follows.

\begin{thm}[Sheffield \cite{She11}]\label{thm:she}
Conditioned on any realisation of $\cS_0$, we have the following convergence 
uniformly in every compact interval
\[
\left(\frac{\UU^x_{\lfloor nt \rfloor}}{\sqrt{ n}}, 
\frac{\UU^y_{\lfloor nt
      \rfloor}}{\sqrt{n}}\right)_{t \ge 0} 
            \xrightarrow[]{n \to
  \infty} (L_t, R_t)_{t \ge 0}
\]
where $(L_t, R_t)_{t\ge 0}$ evolves as a two-dimensional correlated Brownian motion with $\var (L_1) = \var (R_1) = (1-p)/2 = \sigma^2$ and $\cov(L_1, R_1) = p/2$. 
\end{thm}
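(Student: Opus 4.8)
The statement is a functional central limit theorem for the pair of burger counts, and the plan is to organise it around the change of coordinates $S_n := \UU^x_n + \UU^y_n$ and $D_n := \UU^x_n - \UU^y_n$, since $\UU^x = \tfrac12(S+D)$ and $\UU^y = \tfrac12(S-D)$ and the two auxiliary processes behave very differently. The sum process is essentially trivial: reading off \eqref{D} and its hamburger analogue, $S_n - S_{n-1} = +1$ when $X_n \in \{\ac,\ah\}$ and $=-1$ when $X_n \in \{\aC,\aH,\aF\}$, since an $\aF$ consumes exactly one burger whatever its type, so $\hat X_n$ is then an order. By \eqref{eq:3} this is an i.i.d.\ mean-zero $\pm1$ sequence that does not involve $\cS_0$ at all, so Donsker's theorem gives that $S_{\lfloor nt\rfloor}/\sqrt n$ converges to a standard Brownian motion of variance $1$ uniformly on compacts, conditionally on any $\cS_0$.

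All the work is in the difference process. Here $D_n - D_{n-1}\in\{\pm1\}$, equal to $+1$ precisely on $\{X_n\in\{\ac,\aH\}\}\cup\{X_n=\aF \text{ and the top of }\cS_{n-1}\text{ is a hamburger}\}$; the four non-$\aF$ cases contribute nothing to $\E[D_n-D_{n-1}\mid\cF_{n-1}]$, which therefore equals $\tfrac p2\big(\mathbf 1\{\text{top of }\cS_{n-1}\text{ is ham}\}-\mathbf 1\{\text{top is cheese}\}\big)$. Thus $D$ is \emph{not} a martingale, and this drift term is exactly what lowers the variance below $1$. I would pass to the bi-infinite stationary sequence $(X_i)_{i\in\Z}$ (all symbols a.s.\ matched, as recalled above, so the increments of $D$ form a stationary ergodic function of the $X_i$), and prove a CLT for $\sum(D_i-D_{i-1})$ by a martingale approximation (Gordin's method, or the Maxwell--Woodroofe criterion): its only input is a decay-of-correlations estimate for the ``type of the top of the stack'' process, since that type is the sole part of the $k$-th increment not measurable with respect to the past $\sigma(X_j:j\le0)$. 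This is in turn governed by the Donsker fluctuations of the cheese and ham counts and by the fact that a given burger does not linger near the top of the stack; extracting the quantitative $L^2$ bound needed is where one has to work, and is likely to exploit the type-balance of the reduced word of the past. Granting this, $D_{\lfloor nt\rfloor}/\sqrt n$ converges to a Brownian motion, and tightness is immediate since $|D_n-D_{n-1}|\le1$.

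It then remains to identify the limiting covariance and to discard the conditioning on $\cS_0$. The hamburger--cheeseburger symmetry (relabelling $\ac\leftrightarrow\ah$ and $\aC\leftrightarrow\aH$ preserves \eqref{eq:3} and swaps $\UU^x\leftrightarrow\UU^y$) forces $\var(L_1)=\var(R_1)$ and $\cov(L_1,R_1)$ symmetric, while the sum computation gives $\var(L_1)+\var(R_1)+2\cov(L_1,R_1)=1$. One scalar remains, namely $\lim_n\var(D_n)/n = \var(L_1)+\var(R_1)-2\cov(L_1,R_1)$; this is exactly Sheffield's computation, whose combinatorial input is the envelope/reduced-word structure attached to each $\aF$ (and in the subcritical regime $q\le4$ the relevant generating series converges, with the value appearing as $\E|\cR|=1$ in the notation used earlier), and it gives $\var(D_n)/n\to 1-2p$, hence $\var(L_1)=\var(R_1)=(1-p)/2$ and $\cov(L_1,R_1)=p/2$. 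For $\cS_0$-independence I would couple two runs with the same future symbols but different initial stacks: they agree until an $\aF$ first consumes an ``old'' burger, after which one checks that the net type-imbalance among the $O(\sqrt n)$ old burgers consumed by time $n$ contributes only $o(\sqrt n)$ to $D_n$, uniformly over valid initial stacks.

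The main obstacle is the decay-of-correlations estimate for the stack-top process feeding the martingale approximation for $D$ --- equivalently, quantitatively controlling how long a burger persists near the top of the stack --- together with its companion statement that the infinite stack $\cS_0$ washes out under $\sqrt n$ rescaling. Everything else is either immediate (the sum process), a symmetry, or the quoted envelope computation of Sheffield.
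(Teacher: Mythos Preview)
The paper does not prove this theorem at all: it is stated as a result of Sheffield \cite{She11} and simply cited. So there is no ``paper's own proof'' to compare against. What I can do is comment on your sketch relative to Sheffield's actual argument, since that is the implicit benchmark.

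Your decomposition $S_n=\UU^x_n+\UU^y_n$, $D_n=\UU^x_n-\UU^y_n$ is exactly the one Sheffield uses (and the paper alludes to this just after the statement, noting that in \cite{She11} the result is phrased as $(\UU^x+\UU^y)$ and $(\UU^x-\UU^y)/\sqrt{1-2p}$ converging to independent standard Brownian motions). Your treatment of $S_n$ is correct and matches his: it is a simple symmetric walk regardless of the stack. Your identification of the covariance from the two linear constraints plus the single scalar $\var(D_n)/n\to 1-2p$ is also right, and that scalar is indeed what the envelope computation $\E|\cR|=1$ (Sheffield's $\chi=2$) delivers.

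Where you diverge from Sheffield is in the mechanism for the functional CLT for $D$. You propose a martingale approximation (Gordin / Maxwell--Woodroofe) driven by a decay-of-correlations estimate for the stack-top type. Sheffield does \emph{not} do this. His argument is more combinatorial: he controls directly the discrepancy between $D_n$ and a genuine i.i.d.\ walk by showing that the number of $\aF$ symbols in the reduced word $\overline{X_1\cdots X_n}$ is $o(\sqrt n)$ in probability (this is the content of \cref{lem:F} quoted in the paper). That estimate, proved via a recursive/generating-function analysis of the matching structure, is what replaces your correlation-decay input, and it simultaneously handles the $\cS_0$-independence, since two realisations with different initial stacks differ in their $D$-increments only at those unmatched $\aF$'s. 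Your coupling idea for $\cS_0$-independence is in the same spirit but you would still need a quantitative bound, and the natural one is again \cref{lem:F}.

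So your outline is not wrong, but the step you flag as the main obstacle --- correlation decay for the stack-top process --- is genuinely unresolved in your write-up, and it is not clear it is any easier than Sheffield's route. If you want to complete the argument, the cleanest path is to import \cref{lem:F} (or reprove it) rather than to set up an abstract martingale approximation.
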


\begin{remark}
Up to a scaling, this Brownian motion $(L_t, R_t)_{t \ge 0}$ is exactly the same which arises in the main result of \cite{LQGmating} (Theorem 9.1).   This is not surprising: indeed, the hamburger and cheeseburger count give precisely the relative length of the boundary on the left and right of the space-filling exploration of the map.  
\end{remark}

In order to work with uncorrelated Brownian motions, we introduce the following linear transformation $\Lambda$:
$$
\Lambda =(1/\sigma)\left( 
\begin{array}{cc}
1 & \cos(\theta_0)\\
0  & \sin (\theta_0)
\end{array}
\right)
$$ 
where $\theta_0= \pi /(2p_0) = 4\pi/\kappa' = 2 \arctan( \sqrt{1/(1-
  2p)}$ and $\sigma^2 = (1-p)/2$ as in the above theorem. A direct but
tedious computation shows that $\Lambda (L_t, R_t)$ is indeed a
standard planar Brownian motion. (The computation is easier to do by
reverting to the original formulation of \cref{thm:she} in
\cite{She11}, where it is shown that $U^x + U^y$ and $(U^x -
U^y)/\sqrt{1-2p}$ form a standard Brownian motion; however this
presentation is easier to understand for what follows).
\begin{figure}
\begin{center}
\includegraphics[scale=.9]{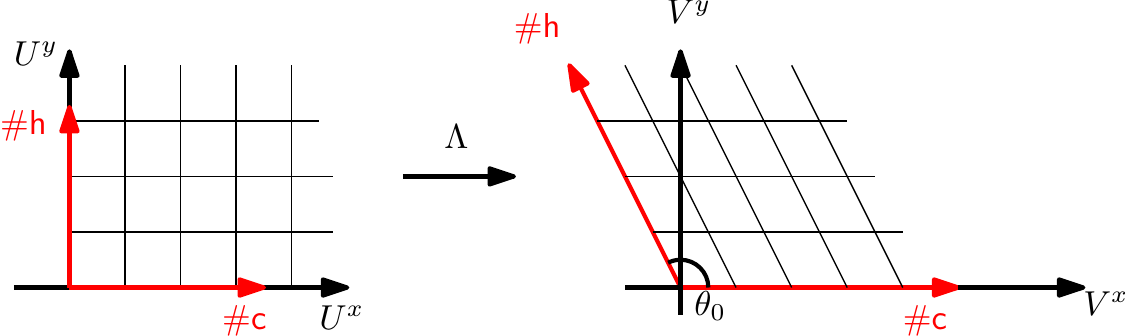}
\caption{The coordinate transformation. Note that in the new coordinates, leaving the cone $\cC(\theta_0)$ (in red in the picture) at some time $n$ corresponds to having eaten all burgers of a given type between times 0 and $n$.}
\label{F:coord}
\end{center}
\end{figure}

\medskip 
We now perform the change of coordinates in the discrete, and thus define
 for $n \ge 0$, $\bd_n = (V^x_n, V^y_n) = \Lambda (\UU^x_n,\UU^y_n)$
 (see \cref{F:coord}). 
 We define $\bv_0 = \Lambda (0,1)$ (note that 
argument of $\bv_0$ is the same as that of the cone). Let $\mathcal C(\theta) 
:= \{(r,\eta): r\ge 0,
 \eta \in  [0,\theta]\}$ denote the $2$-dimensional closed cone of angle
 $\theta$ and let $\cC_n(\theta)$ be the translate of the cone
 $\cC(\theta)$ by the vector $-n\bv_0$. Now 
define $T^*_{\theta_0,n}(\bd):= \min \{k \ge 1, \bd_k \notin \mathcal
C_n(\theta_0)\}$. Let 
$E^*_n$ denote the event that 
$$
E^*_n = (X_0 =
\ac) \cap (\bd_{T^*_{\theta_0,n} - 1} =- n \bv_0 )\cap (X_{T^*_{\theta_0,n}} = \aF)
$$
In words, the walk leaves the cone $\cC_n(\theta_0)$ through its tip, and the 
symbol at this time is an $\aF$. Recall the event $E = \{X_{\ph(0)} = \aF\}$.

\begin{lemma}\label{prop:same_event}
The events $\{X_0 = \ac\} \cap \{T>m^2,|J_T| = n \} \cap  E$ and $  E^*_n \cap \{T_{\theta_0,n}^* > m^2\}$  are identical.
\end{lemma}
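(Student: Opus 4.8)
The plan is to translate both events entirely into statements about the two burger counts $(U^x_k,U^y_k)$ and the reduced words $W_k:=\overline{X_1\cdots X_k}$, and then to check the two inclusions by a direct bookkeeping of the (typed, LIFO) stack dynamics. For the translation, note that $\Lambda$ sends $(1,0)$ to a vector of argument $0$ and $(0,1)$ to one of argument $\theta_0$, hence maps the first quadrant bijectively onto $\mathcal C(\theta_0)$. Therefore $\bd_k=\Lambda(U^x_k,U^y_k)$ lies in $\mathcal C_n(\theta_0)=\mathcal C(\theta_0)-n\bv_0$ iff $\Lambda(U^x_k,U^y_k+n)\in\mathcal C(\theta_0)$ iff $U^x_k\ge0$ and $U^y_k\ge-n$, and $\bd_k=-n\bv_0$ iff $(U^x_k,U^y_k)=(0,-n)$. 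Thus $T^*_{\theta_0,n}=\min\{k\ge1:U^x_k<0\text{ or }U^y_k<-n\}$, and $E^*_n\cap\{T^*_{\theta_0,n}>m^2\}$ is exactly the event that $X_0=\ac$, that $U^x_k\ge0,\ U^y_k\ge-n$ for all $k<T^*_{\theta_0,n}$, that $(U^x_{T^*_{\theta_0,n}-1},U^y_{T^*_{\theta_0,n}-1})=(0,-n)$, that $X_{T^*_{\theta_0,n}}=\aF$, and that $T^*_{\theta_0,n}>m^2$.

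Next I would establish the bookkeeping facts valid on $\{X_0=\ac\}$, writing $T=\ph(0)$. Since $X_0$ is a production, the cheeseburger it creates is the topmost element (hence the topmost cheeseburger) of $\mathcal S_0$. Consequently, for $k<T$ the reduced word $W_k$ has only $\aH$ letters among its leading (order) part: a leading $\aC$, or a leading $\aF$, would have had to dig past the level of $X_1$ and would then necessarily have eaten the topmost cheeseburger, resp.\ topmost burger, of $\mathcal S_0$ — i.e.\ the time-$0$ cheeseburger — contradicting $k<T$. Since the net cheese and net ham counts are invariant under the reduction rules, $U^x_k$ equals the number of $\ac$'s in the trailing (production) part of $W_k$ (so $U^x_k\ge0$ for $k<T$), while $U^y_k=\#\{\ah\in W_k\}-a_k$ with $a_k:=\#\{\aH\text{ in the leading part of }W_k\}$; and a one-symbol case analysis shows $a_k$ can only increase — when an $\aH$ arrives while the trailing part of $W$ contains no $\ah$ — or stay put, so $a_k$ is non-decreasing on $k<T$. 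Finally, if $X_T=\aF$, then for that $\aF$ to eat the time-$0$ cheeseburger there must be nothing above it, so $W_{T-1}$ has no trailing production, i.e.\ $W_{T-1}=\aH^{a_{T-1}}$, and hence $J_T=\overline{\ac\,\aH^{a_{T-1}}\,\aF}=\aH^{a_{T-1}}$, so $|J_T|=a_{T-1}$.

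The inclusion $\{X_0=\ac\}\cap\{T>m^2,|J_T|=n\}\cap E\subseteq E^*_n\cap\{T^*_{\theta_0,n}>m^2\}$ is then quick: $|J_T|=n$ forces $a_{T-1}=n$ and $W_{T-1}=\aH^n$, so $(U^x_{T-1},U^y_{T-1})=(0,-n)$; for $k<T$ we have $U^x_k\ge0$ and $U^y_k\ge-a_k\ge-a_{T-1}=-n$, so $\bd_k\in\mathcal C_n(\theta_0)$; and at time $T$ the $\aF$ eats the cheeseburger, so $U^x_T=-1$ and $\bd_T\notin\mathcal C_n(\theta_0)$, giving $T^*_{\theta_0,n}=T$ and the event of the first paragraph. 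For the reverse inclusion, start from that event. First $\ph(0)\ge T^*_{\theta_0,n}$, since if $\ph(0)<T^*_{\theta_0,n}$ then eating the time-$0$ cheeseburger at time $\ph(0)$ would force $U^x_{\ph(0)}=-1$, contradicting $\bd_{\ph(0)}\in\mathcal C_n(\theta_0)$. Hence the bookkeeping of the previous paragraph applies up to $T^*_{\theta_0,n}-1$, and combining $U^x_{T^*_{\theta_0,n}-1}=0$, $U^y_{T^*_{\theta_0,n}-1}=-n$ with the "only $\aH$ leading" structure gives $W_{T^*_{\theta_0,n}-1}=\aH^{n+b}\ah^{\,b}$ for some $b\ge0$. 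The key point — and the one I expect to be the main obstacle — is that $b=0$: every increment of $a_k$ momentarily forces $U^y$ down to minus the new value of $a_k$, so staying in the cone requires $a_k\le n$ for all $k\le T^*_{\theta_0,n}-1$; combined with $a_{T^*_{\theta_0,n}-1}=n+b\ge n$ this gives $b=0$. (Equivalently: if $b\ge1$ then $a_k$ reaches $n+1$ at some $k_0\le T^*_{\theta_0,n}-1$, and that very step has $U^y_{k_0}=-(n+1)<-n$, contradicting the minimality of $T^*_{\theta_0,n}$.) With $b=0$ we have $W_{T^*_{\theta_0,n}-1}=\aH^n$ with no trailing production, so the $\aF$ at time $T^*_{\theta_0,n}$ eats the (topmost) time-$0$ cheeseburger; therefore $\ph(0)=T^*_{\theta_0,n}$, so $X_{\ph(0)}=\aF$ (this is $E$), $|J_{\ph(0)}|=n$ by the previous paragraph, and $\ph(0)>m^2$, i.e.\ event $A$ holds.

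In short, after the purely linear change of variables of the first paragraph, the proof is an exercise in tracking the reduction rules, the typed LIFO stack, and the resolution of the $\aF$ symbols; the delicate pieces are the "only $\aH$ in the leading part before $\ph(0)$" observation, the monotonicity of $a_k$, and the "$b=0$, otherwise we would have left the cone earlier" argument. (Incidentally, since all the $\aF$ symbols involved are matched within $[1,\ph(0)]$, none of this depends on the initial stack $\mathcal S_0$, so the statement is genuinely an equality of events, as asserted.)
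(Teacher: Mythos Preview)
Your proof is correct and follows essentially the same approach as the paper's: translate both events via $\Lambda^{-1}$ into statements about the first exit of $(U^x,U^y)$ from the shifted quadrant $\{x\ge 0,\ y\ge -n\}$, and then identify that exit with the matching time $\ph(0)$ by tracking the LIFO stack. Your treatment is considerably more detailed than the paper's sketch; in particular, your monotonicity of $a_k$ together with the observation that each increment of $a_k$ forces $U^y$ down to the new value $-a_k$ is exactly what underlies the paper's terse claim that $|\inf_{s\le t}U^y_s|$ equals the number of $\aH$'s in the reduced word, and your ``$b=0$'' step makes explicit the point the paper leaves implicit when it asserts the walk exits through the tip.
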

\begin{proof}
Consider $(U^x, U^y)$ for a moment and suppose $X_0 = \ac$. Observe that
$\ph(0)$ corresponds to the first time that $U^x = -1$. Moreover, the set 
of times $t \le \ph(0)$ such that $U_t^x = 0$ corresponds to the times at which 
that initial cheeseburger is the top cheeseburger on the stack; and the size of 
the infimum,  $|\inf_{s\le t} U^y_s|$ gives us the number of hamburger orders 
which have its match at a negative time; or in other words, the number of 
hamburger orders $\aH$ in the reduced word at time $t$.

Now the event $E$ occurs if and only if the burger at $X_0$ gets to the top of 
the stack and this is immediately followed by an $\aF$. Hence the event $E \cap 
(|J_T| = n)$ will occur if and only if $ U_t^y = \inf_{s\le t} U^y_s = - n$ and 
$U^x_t = \inf_{s\le t} U^x_s =0$ for some $t$, and we have an $\aF$ 
immediately after. In other words, the walk $(U^x, U^y)$ leaves the quadrant $\{ x \ge 0, y \ge - n \}$ for the first time at time $t$, and does so through the tip. Equivalently, applying the linear map $\Lambda$, 
 $\bd$ leaves the cone $\cC_{n, 
\theta_0}$ for the first time at time $t$, and does so through the tip. 
\end{proof}
\section{Random walk estimates}
\label{S:rw_estimate}

We call $\Lambda (\Z^2)$ the \textbf{lattice points}, which are the points that 
$\bd$ can visit.
Let $s$ be an infinite burger stack and let $x$ be a lattice point.
From now on we denote by $\P^{x,s}$ the law of the walk $\bd$
started from $x$ conditioned on $\cS_0 = s$. In this
section, we prove the following lemma. Recall $p_0 = \pi/2\theta_0$
from \cref{symbols}.

\begin{prop}\label{lem:wrap_up_lemma}
For all $\ve >0$ there exist
positive constants $c= c(\ve),C=C(\ve)$ such that for all $n \ge 1$, all $m \ge n (\log n )^3$, and
any infinite burger stack $s$,
\begin{equation}\label{wrap1}
\frac{c n^{2p_0}}{n^{1+\ve}m^{4p_0+\ve}} \le \P^{0,s}(E^*_n; T^*_{\theta_0,n}>m^2) \le 
\frac{C n^{2p_0}}{n^{1-\ve}m^{4p_0-\ve}}
\end{equation}
Furthermore,
\begin{equation}\label{wrap2}
\frac{c}{n^{2p_0+1+\ve}} \le \P^{0,s}(E^*_n) \le 
\frac{C}{n^{2p_0+1-\ve}}.
\end{equation}
\end{prop}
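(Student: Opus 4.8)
The plan is to transfer the problem entirely to planar Brownian motion in a cone and to use the known precise asymptotics for cone exit probabilities of Brownian motion. After applying $\Lambda$, the event $E^*_n$ concerns the walk $\bd$ leaving the cone $\cC_n(\theta_0)$ of opening angle $\theta_0$ precisely through its tip $-n\bv_0$, immediately followed by an $\aF$ symbol; by \cref{prop:same_event} this is literally the event $\{X_0=\ac\}\cap\{T>m^2,|J_T|=n\}\cap E$ we ultimately care about. The key classical input is that for planar Brownian motion started at a point at distance $r$ from the apex of a cone of angle $\theta$, the probability of staying inside the cone for time $t$ (and exiting near the apex) decays like $r^{\pi/\theta} t^{-\pi/(2\theta)}$ up to constants; here $\pi/\theta_0 = 2p_0$, which is the source of all the exponents $2p_0$ and $4p_0$ appearing in the statement. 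So morally $\P^{0,s}(E^*_n; T^*_{\theta_0,n}>m^2) \approx n^{2p_0}\cdot m^{-4p_0}\cdot (\text{local term for hitting the tip exactly and seeing }\aF)$, and the extra factor $n^{-1\pm\ve}$ comes from the cost of the walk actually landing \emph{at} the lattice point $-n\bv_0$ rather than just in its vicinity, together with the constant-order probability $p/2$ of the subsequent $\aF$.

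The steps I would carry out, in order: (1) Reduce the conditioning on the stack $s$: observe, as already noted after \cref{thm:exit}, that $E^*_n$ and $T^*_{\theta_0,n}$ are measurable with respect to $(X_i)_{i\ge 1}$ together with finitely much information, so the bounds will not depend on $s$; this lets me write $\P^{0,s}=\P^{0}$ throughout. (2) Use Sheffield's invariance principle \cref{thm:she} and its $\Lambda$-transformed version to compare the discrete walk $\bd$ with standard planar Brownian motion; because we need \emph{polynomial} tail bounds with $\ve$-room rather than sharp constants, a KMT-type coupling or a direct comparison on dyadic scales suffices, and the condition $m \ge n(\log n)^3$ is exactly what is needed so that the discrete corrections (lattice effects, the drift-free increments having bounded-but-not-iid structure because of the $\aF$ reductions) are negligible on the relevant scales. (3) Establish the Brownian cone estimate: for BM in $\cC(\theta_0)$ started near the boundary point which after translation becomes $\bv_0\cdot$(scale), the probability of surviving in the cone up to time $\asymp m^2$ is $\asymp (n/m)^{2p_0}\cdot m^{-2p_0}=n^{2p_0}m^{-4p_0}$ — this is a separation-of-variables computation with the Dirichlet eigenfunction $\rho^{\pi/\theta_0}\sin(\pi\eta/\theta_0)$ on the cone, or can be quoted from the literature on cone exponents (e.g. Shimura, or Burdzy--Bañuelos). (4) Convert "exit near the tip" to "exit exactly at the lattice point $-n\bv_0$": here a local central limit theorem / local estimate for the walk $\bd$ conditioned to stay in the cone gives the loss of a factor $\asymp n^{-1}$ (the walk must hit one specific lattice point out of $\asymp n$ at the relevant scale), which produces the $n^{1\pm\ve}$ discrepancy between upper and lower bounds. (5) Multiply by the constant $\P(\aF)=p/2$ for the final symbol, using the Markov property at time $T^*_{\theta_0,n}$. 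For \eqref{wrap2}, sum \eqref{wrap1} over dyadic ranges of $m$ — or more directly, run the same argument without the time constraint, giving $\P^{0}(E^*_n)\asymp n^{2p_0}\cdot n^{-1\pm\ve}\cdot n^{-2p_0\cdot 2}$... wait, one gets $n^{-2p_0-1\pm\ve}$ after integrating the survival time, matching \eqref{wrap2} since $\int_{n^2}^\infty t^{-p_0-1}\,dt \asymp n^{-2p_0}$ which combined with $n^{2p_0}\cdot n^{-1}$ gives $n^{-1}$; the extra $n^{-2p_0}$ must instead come from the probability of the walk \emph{reaching} distance $\asymp n$ from the apex in the first place, i.e. from the cone exit being at the tip, so the bookkeeping is $n^{2p_0}$ (survival reward for starting at distance $n$) $\times\ n^{-1}$ (hitting the exact tip) $\times\ n^{-4p_0}$ would overcount — the correct decomposition is that \eqref{wrap2} is obtained by integrating \eqref{wrap1} against $dm^2$ over $m^2\in[n^2(\log n)^6,\infty)$ plus handling the short-time part $m^2\le n^2(\log n)^6$ separately by a cruder bound, which I would do by a direct random-walk argument.

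The main obstacle I expect is step (4): getting the factor $n^{-1\pm\ve}$ for hitting the apex \emph{exactly} at a prescribed lattice point, for the walk conditioned to remain in the cone. One cannot just invoke a naive local CLT, because the conditioning on staying in the cone distorts the walk near the boundary and apex precisely where the estimate is needed, and because the increments of $\bd$ are not iid (the $\aF$-reductions couple increments across macroscopic time spans). The right approach is a two-scale argument: use the cone-exit asymptotics to control the walk down to a scale $\delta n$ for small $\delta$, then on the last scale use a "barrier" / last-exit decomposition together with a local estimate for the \emph{unconditioned} walk (which does satisfy a local CLT up to the required precision, using e.g. Sheffield's mixing bounds on the burger stack), absorbing the mismatch into the $n^{\pm\ve}$ slack. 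The $(\log n)^3$ in the hypothesis $m\ge n(\log n)^3$ is the buffer that makes this two-scale patching go through, since it guarantees the survival-time constraint is on a strictly larger scale than the spatial scale $n$ where the delicate apex analysis happens.
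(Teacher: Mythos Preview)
Your plan differs substantially from the paper's and has a genuine gap in step (4). The paper does not use KMT or a local CLT; it builds explicit strictly sub- and super-harmonic functions (perturbations of $r^{\pm 2p_0}\sin(\pi\theta/\theta_0)$ and of the half-plane function $r\sin\theta$), applies them to a macroscopic time-change $Y_k(\ve)$ of $\bd$ (wait until $\bd$ has moved a fraction $\ve$ of its current distance from the apex), and uses optional stopping. Over one such macroscopic step the invariance principle together with Sheffield's control on the number of $\aF$-symbols in a reduced word makes the walk close to Brownian motion uniformly in the stack $s$, and the strict sub/super-harmonicity absorbs the discrepancy. This yields a three-step decomposition: (1) escape from $0$ to distance $n/2$, cost $n^{-1\pm\ve}$; (2) from distance $n/2$ reach distance $m$, cost $(n/m)^{2p_0\pm\ve}$; (3) return to within a fixed $r_0$ of the apex, cost $m^{-2p_0\pm\ve}$.

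The gap is the source of the $n^{-1}$. It does \emph{not} come from hitting one specific lattice point among $\sim n$ at the apex. The origin lies on the boundary ray $\{\arg(z+n\bv_0)=\theta_0\}$ of $\cC_n(\theta_0)$, at distance $\sim n$ from the apex; the factor $n^{-1}$ is the half-plane escape cost for a walk started $O(1)$ from that boundary---equivalently, the angular eigenfunction $\sin(\pi\theta/\theta_0)$ at the starting angle is of order $1/n$. You acknowledge the start is ``near the boundary'' but then compute the Brownian survival-and-exit-near-apex probability as $\asymp n^{2p_0}m^{-4p_0}$, which is the answer for a bulk start and omits exactly this factor; you then try to recover the missing $1/n$ via a local CLT at the wrong end of the trajectory. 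At the apex, once the walk is within the fixed radius $r_0$ it hits $-n\bv_0$ exactly with probability bounded below by some $c(\ve)$, with no factor of $n$; the bookkeeping trouble you yourself flag for \eqref{wrap2} is a symptom of this misattribution. Separately, because the increments of $\bd$ are not i.i.d.\ (each $\aF$-increment depends on the full current stack), neither a KMT coupling nor a standard local CLT is directly available, which is precisely why the paper resorts to the softer perturbed-harmonic/optional-stopping approach rather than a strong coupling.
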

Using \cref{prop:same_event} and the symmetry betweeen cheese and hamburgers, 
the above lemma completes the proof of
the first item of \cref{thm:exit}.
\subsection{Sketch of argument in Brownian case.} \label{sec:sketch}
To ease the explanations we will first explain heuristically how the exponent can be computed, discussing only the analogous question for a Brownian motion. 
To start with, consider the following simpler question. Let $B$ be a
standard
two-dimensional Brownian motion started at a point with polar
coordinate $(1,\theta_0/2)$ and let $S$ be the first time that $B$
leaves $\cC(\theta_0)$. For this we have:
\begin{equation}\label{coneBM}
\P(S > t )= t^{- p_0+o(1)}
\end{equation}
as $t \to \infty$. To see why this is the case, consider the conformal map $z \mapsto z^{\pi/\theta_0}.$ This sends the cone $\cC(\theta_0)$ to the upper-half plane. In the upper-half plane, the function $z \mapsto  \Im (z)$ is harmonic with zero boundary condition. We deduce that, in the cone,
$$
z\mapsto g(z) : = r^{\pi/\theta_0} \sin \left(\frac{\pi
    \theta}{\theta_0}\right) ; \quad  z\in \cC(\theta_0),
$$
is harmonic.

Now in the cone $\cC_n(\theta_0)$, if Brownian motion survives for a time
$m^2 \gg n^2$, then it is plausible that it reaches distance at least
$m$ from the tip of $\cC_n(\theta_0)$. We are interested in the event
that the Brownian motion reaches distance $m$ from the tip of
$\cC_n(\theta_0)$ before reaching near the tip of $\cC_n(\theta_0)$
while staying inside the cone.

We now decompose this event into
three steps. In the first step, the Brownian motion
must first reach a distance $n/2$ from the origin. This is like
surviving in the upper half plane which by the heuristics above has
probability roughly $n^{-1}$. In the second step, the walk reaches distance
$m$ with probability roughly $(n/m)^{2p_0}$. This can be deduced by
using the harmonic function above which grows like $r^{2p_0}$.

Finally for the third step, the Brownian motion must go back to the tip.
Suppose now, that we are interested in the event $\cE$ that the Brownian motion 
leaves the cone $\cC(\theta_0)$ through the ball of radius 1, that is, $\cE = \{ 
|B_S| \le 1\}$. To compute the tail of $S$ on this event, we can use the function 
$$
z\mapsto g(z) : = r^{-\pi/\theta_0} \sin \left(\frac{\pi \theta}{\theta_0}\right) ;  z\in \cC(\theta_0),
$$
which is harmonic for the same reason as above (note that the sign of
the exponent in the power of $r$ is now opposite of what it was
before (we flipped the images of $0$ and $\infty$ in the choice of the 
conformal map). Using this we can conclude that
coming back to the ball of radius $1$ from distance $m$ costs
$m^{-2p_0}$. 

Thus combining all the three steps, we obtain $n^{-1} \cdot (n/m)^{2p_0}\cdot
m^{-2p_0}$ which is roughly what is claimed in
\cref{lem:wrap_up_lemma}.

\subsection{Cone estimates for random walk}

We want to replace the Brownian motion in the above sketch by a random walk. 
The difficulty here is that the functions $r^{\pm\pi/\theta_0} \sin(\frac{\pi 
\theta}{\theta_0})$ are not exactly harmonic for the random walk. The main idea 
to overcome this is to
approximate the Brownian motion by large blocks of the walk $\bd$, of
an appropriate \emph{macroscopic} length (see \cref{def:Y}) for which the 
central limit theorem will apply. To deal with the small error in this 
approximation, we have to give ourself some room by perturbing the above 
functions so that they become strictly sub-harmonic or super-harmonic and this 
explains why we lose an $\ve$ in the exponent (see \cref{lem:sub_super_mart}). 
Once we know how to get sub-martingale and super-martingale for the walk, the 
rest follows quite easily.
This is similar to a strategy
originally devised by McConnell \cite{mcc}, with some small but
crucial differences.


Now we begin the proof of \cref{lem:wrap_up_lemma}. Recall that $\Lambda(\Z^2)$ 
is
the set of lattice points where $\bd$ can visit. 
For $x>0$ and a process $(Z_k)_{k \ge 0}$ define
$$ 
 T_{\theta_1,\theta_2}^* =T_{\theta_1,\theta_2}^*(Z):= 
\min \{k \ge 1, Z_k \notin \mathcal
C(\theta_1,\theta_2)\}.
$$
We sometimes denote $T^*_\theta$ for $T^*_{0,\theta}$ when there is no source of 
confusion.
Also recall the notation $T^*_{\theta_0,n}$ from
\cref{prop:same_event}. 

%
%

\begin{defn}\label{def:Y}
 For $\ve > 0$ we will 
define the following time-changed
walk $\{Y_i(\ve)\}_{i \ge 0}$ and stopping times $\{\tau_i (\ve)\}_{i \ge 
0}$ 
as follows. Start with $Y_0 (\ve)= \bd_0$ and $\tau_0(\ve) = 0$. Given 
$\bd_{\tau_k(\ve)} = Y_k(\eps) $, we 
inductively
define $\tau_{k+1}(\ve) = \min\{t > \tau_k(\ve): | {\bd}_t
- { \bd}_{\tau_k(\ve)}| > \ve |Y_k|\}$ and $Y_{k+1}(\ve) =
\bd_{\tau_{k+1}}$.
\end{defn}
The next proposition shows that the
Brownian motion estimates in \cref{sec:sketch} pass through to the
discrete walk estimates with $\ve$ error using little more than the invariance
principle. This proposition is the key step for transferring results from Brownian motion
to the discrete walk. To help alleviate notations, and since $\ve$ is
fixed throughout this proposition we will simply write $\tau_k$, and $
Y_k$ for $\tau_k(\eps)$ and $ Y_k(\eps)$. Let $\cC(\theta_1,\theta_2) = \{z :
\theta_1<\arg(z) < \theta_2\} $.
\begin{prop}\label{lem:sub_super_mart}
Fix $\pi/2 \le \theta < \pi$ and an infinite burger stack
$s$. Let $f:\R^2 {\setminus \{0\}} \to \R$ be a continuous function such 
that 
\begin{enumerate}
\item $\Delta f (x)>0$ (resp. $\Delta f (x)<0$) for all $x \in
  \mathcal C(\theta)$.
\item $f$ is \textbf{homogeneous} in the sense that $f(\lambda x) = \lambda^d
x$ for some $d \in \R$ and all $\lambda>0$.
\end{enumerate}
There exists $\eps_0$ such that if $\eps \in (0,\eps_0)$, the following holds.  
There exists a constant $r_0(\ve)>0$ such that
for all lattice points $v $ in $\cC(\ve,\theta-\ve)$ with $|v | > r_0$, $f(Y_{k 
})$ is a $\P^{v,s}$ 
submartingale
(resp. supermartingale) with respect to the filtration $\mathcal F_k=
\sigma (X_i:1 \le i
\le \tau_k)$ until the walk $Y$ exits $\cC(\ve,\theta-\ve) \cap
\{z:|z|>r_0\}$.
\end{prop}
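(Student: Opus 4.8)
The plan is to treat the submartingale case ($\Delta f>0$); the supermartingale case then follows by applying the result to $-f$. Assume, as holds in all our applications, that $f$ is $C^2$ in a neighbourhood of $\cC(\theta)$. The first step is to reduce the assertion to a single one-step inequality. Fix $\ve$ small, condition on $\cF_k$ on the event that $Y$ has not yet left $\cC(\ve,\theta-\ve)\cap\{|z|>r_0\}$, and write $Y_k=w$, a lattice point with $\arg w\in[\ve,\theta-\ve]$ and $|w|>r_0$; the claim is that $\E[f(Y_{k+1})\mid\cF_k]\ge f(w)$. By the strong Markov property of the burger-stack chain, conditionally on $\cF_k$ the block displacement $D:=Y_{k+1}-w=\bd_{\tau_{k+1}}-\bd_{\tau_k}$ has a law depending only on $|w|$ and on $\cS_{\tau_k}$, and since $\bd$ has increments bounded by a constant $K:=\|\Lambda\|$ one has $\ve|w|\le|D|\le\ve|w|+K$, so $Z:=D/(\ve|w|)$ is confined to the fixed annulus $\{1\le|z|\le2\}$ as soon as $r_0\ge K/\ve$. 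Using the homogeneity $f(\lambda x)=\lambda^d f(x)$ one gets $f(Y_{k+1})=|w|^d\,f(\hat w+\ve Z)$ with $\hat w:=w/|w|$, and since $|w|^d>0$ it suffices to prove $\E[f(\hat w+\ve Z)]\ge f(\hat w)$.

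Next I would establish the corresponding Brownian statement. Let $B$ be a standard planar Brownian motion from $0$ and $\sigma:=\inf\{t:|B_t|\ge1\}$, so $0<\E\sigma<\infty$. The path $s\mapsto\hat w+\ve B_{s\wedge\sigma}$ stays in the compact set $\overline B(\hat w,\ve)\subset\R^2\setminus\{0\}$, on which $f$ is $C^2$ with bounded derivatives, so It\^o's formula yields
\[
\E\big[f(\hat w+\ve B_\sigma)\big]-f(\hat w)=\frac{\ve^2}{2}\,\E\!\left[\int_0^\sigma \Delta f(\hat w+\ve B_s)\,ds\right].
\]
By continuity and homogeneity of $\Delta f$ there is $\delta>0$ with $\Delta f\ge\delta$ on $\cC(\theta)\cap\{\tfrac12\le|z|\le2\}$; since $\arg\hat w\in[\ve,\theta-\ve]$ and $|\ve B_{\cdot\wedge\sigma}|\le\ve$, the perturbed path lies in this region apart from excursions into an $O(\ve)$-neighbourhood of $\partial\cC(\theta)$ of total occupation time $o_\ve(1)$, which one absorbs into the strictly positive main term using that $\Delta f$ is bounded on $\overline B(\hat w,\ve)$. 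This gives, for $\ve<\ve_0$,
\[
\E\big[f(\hat w+\ve B_\sigma)\big]\ge f(\hat w)+\eta,\qquad \eta:=c\,\delta\,\ve^2>0,
\]
with $\eta$ independent of $\hat w$ (note $\sigma$ does not depend on $\hat w$).

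Finally I would transfer this to the walk via the invariance principle. By \cref{thm:she}, $\bd_{\lfloor nt\rfloor}/\sqrt n$ converges to a standard planar Brownian motion conditionally on any realisation of the stack, and --- since over a macroscopic block the initial stack influences the evolution only through a lower-order correction --- uniformly over stacks. As hitting the unit circle is an a.s.\ continuous functional of the Brownian path and the overshoot of $\bd$ is at most $K$, it follows that the rescaled block increment $Z$ (started from any stack at modulus $R$) converges in law to $B_\sigma$ as $R\to\infty$, uniformly over stacks, with values in $\{1\le|z|\le2\}$; since $f$ is uniformly continuous on $\overline B(\hat w,\ve)$, uniformly over the compact range of $\hat w$, this gives $\sup_{\hat w}|\E[f(\hat w+\ve Z)]-\E[f(\hat w+\ve B_\sigma)]|\to0$ as $R=|w|\to\infty$, uniformly over stacks. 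Choosing $r_0=r_0(\ve)\ge K/\ve$ so that this quantity is $\le\eta/2$ for $|w|>r_0$ and combining with the previous paragraph gives $\E[f(\hat w+\ve Z)]\ge f(\hat w)+\eta/2>f(\hat w)$, the required one-step inequality. I expect the main obstacle to be precisely this last step: making the block invariance principle uniform in the stack together with convergence of the rescaled exit time, and controlling both the stack-dependence and the overshoot on the macroscopic scale --- morally ``just the CLT'', but needing the sub-exponential and concentration estimates implicit in \cite{She11,chen}; the need to reconcile the sign of $\Delta f$ on $\cC(\theta)$ with a block starting in $\cC(\ve,\theta-\ve)$ and wandering $O(\ve)$ past $\partial\cC(\theta)$ is also what creates the $\ve$ loss in the downstream exponents.
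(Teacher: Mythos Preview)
Your overall strategy matches the paper's: reduce to a one-step drift inequality, verify it for Brownian motion, and transfer it to the walk via the invariance principle and homogeneity. Two points deserve comment.

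First, the Brownian step is simpler than you make it. The paper does not use It\^o or any explicit computation of $\Delta f$ along the path; it just observes that strict subharmonicity of $f$ in $\cC(\theta)$ gives $\E^x[f(Z(B))]-f(x)>0$ for every $x$ in the compact arc $D=\overline{\cC(2\ve,\theta-2\ve)}\cap\{|x|=1\}$, and then continuity plus compactness yields a uniform lower bound $\delta>0$. This sidesteps your boundary-excursion discussion entirely: starting from angle at least $2\ve$ and moving within a ball of radius $\ve|x|$ keeps the Brownian path strictly inside $\cC(\theta)$, so the occupation-time argument is unnecessary (and the ``$o_\ve(1)$'' claim is not quite right as stated).

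Second, and more importantly, you correctly flag uniformity in the stack as the main obstacle, but you do not supply the mechanism that resolves it. \cref{thm:she} gives convergence for each fixed stack, with no uniformity; simply asserting that the stack contributes ``a lower-order correction'' is precisely what must be proved. The paper's device is a coupling: fix one reference stack $s$, obtain $r_0(s)$ from the invariance principle for that $s$, and then for any other stack $s'$ run both walks on the \emph{same} symbol sequence $(X_i)$. The two walks differ only at $\aF$ symbols whose match lies in the initial stack, and by Sheffield's estimate (\cref{lem:F}) the number $F_n$ of such symbols satisfies $F_n/\sqrt{n}\to0$ in probability. Hence over a block of duration $O(|v|^2)$ the discrepancy is $o(|v|)$, negligible on the macroscopic scale, and the one-step inequality transfers from $s$ to $s'$ with a uniform $r_0$. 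This is the missing ingredient; the ``sub-exponential and concentration estimates'' you allude to are not what is used.
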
 

\begin{proof}
  First observe that it is enough to show that there
  exists an $r_0$ such that for any lattice point $v$ in 
$\cC(2\ve,\theta-2\ve)$ with $|v| > r_0$ and
  any infinite burger stack $s$, 
  \begin{equation}
    \label{eq:5}
    \E^{v,s}\big(f(Y_1)\big) - f(v) >0
  \end{equation}
  where $\E^{v,s}$ is the expectation with respect to the measure
  $\P^{v,s}$. This is because conditioned on $\mathcal F_k$, the sequence
  $\{\bd_{\tau_{k}+i}\}_{i \ge 1}$ has the law  $\P^{\bd_{\tau_k}, \cS_{\tau_k}}$. Also
  notice that we stop when the walk $\bd$ reaches a distance less than
  $r_0$ or leaves the cone $\cC(2\ve,\theta - 2\ve)$ and hence we only need
  to prove \eqref{eq:5} when $v$ is in the claimed range.

The claim \eqref{eq:5} is a consequence of the invariance principle and a result of Sheffield
  (\cref{lem:F} stated below). The important issue is to verify that we can 
pick an $r_0$
  uniformly over the burger stack $s$ and the initial position of the walk
$Y_0= v$. 
 To
  establish this, we set up a few notations. For any continuous curve
  $\gamma$
  let $\tau(\gamma,\ve)$ denote the exit time of $\gamma$ from the ball $\{z: |z-x| < \eps |x|\}$ where $x = \gamma(0)$. 
  Let 
 \begin{equation} \label{eq:Z}
 Z( \gamma) = Z(\gamma,\ve) = 
 \gamma (\tau) 
 \text{ where } \tau = \tau ( \gamma, \ve)
  \end{equation} 
  denote the position of the curve 
  at time $\tau(\gamma,\ve)$.  Let $D =  \overline{\cC(2\ve,\theta - 2\eps) }
\cap  \{ |x| = 1\}$ where $\overline{\cC(2\ve,\theta - 2\eps) }$ is the closure 
of $\cC(2\ve,\theta - 2\eps) $
  (in particular, note that $D$ is compact). With these notations, observe that 
$\E^{v,s} [ f(Y_1)] = \E^{v,s} [ f (Z( \bd))]$, so our goal \eqref{eq:5} becomes
  \begin{equation} \label{eq:5'}
  \E^{v,s}\big[ f (Z ( \bd)) \big] > f(v).
  \end{equation}

  Let $B$ denote a standard Brownian motion. A
  preliminary observation is that by our assumptions on $f$, $\E^x(f(Z(B))) - f(x)  >0$ for all $x \in
  D$. Moreover, since the left hand side is obviously continuous in $x$, 
  we deduce 
that there is a constant $\delta>0$ such that
  \begin{equation}
    \label{eq:4}
    \E^x\big[f(Z(B))\big] - f(x) >\delta, \quad \quad \forall x \in D.
  \end{equation}

  
Now we approximate $B$ by the discrete walk.
Let $A(\ve):=\{x:1-\ve \le |x| \le 1+\ve, 0 \le \arg(x) \le
  \theta\}$  and let  $M = \max \{|f(x)|:x\in A(\ve)\}$. First choose $K>0$ such that
\begin{equation}
 \P^x(\tau(B,2\ve) > K) <\frac \delta {16M} \quad \quad \forall x \in
  D.\label{eq:K}
\end{equation}

Fix some arbitrary infinite
burger stack $s$ for now. Choose $r_0(s)>0$ such that for all 
 $v \in \cC(\eps, \theta - \ve) \cap \Lambda(\Z^2)$ with $|v | > r_0$, the following holds. 
 
 
From the invariance principle (\cref{thm:she}), we know that as $v \to \infty$, 
the distribution of $ \bd ( \cdot |v|^2) /|v|$ under $\P^{v,s}$ is close 
that of Brownian motion $B$ started from $v/|v|$, uniformly over $v$ by translation invariance. 
%
Hence for $v$ sufficiently large (i.e., there is 
$r_0 = r_0(s)$ such that if $|v| > r_0(s)$), by uniform continuity of $f$ in the annulus $A(1/2)$,
\begin{equation}
  \label{eq:6}
  \big|\E^{v,s}\big[ f(Z ( \tfrac{ \bd}{|v|} ))\big] - \E^{v/|v|}\big[f(Z(B))\big] \big| <\delta/8.
\end{equation}
For the same reason, if $|v| > r_0(s)$ then for $K$ as in \eqref{eq:K},
\begin{equation}
  \label{eq:7}
  \P^{v,s}\big[\tau( \bd,2\ve) > K|v|^2\big] ) <\frac \delta {12M}.
\end{equation}
Now we will show that the conclusion \cref{eq:6} holds for $|v| >
r_0(s)$ even if $s$ is changed into another infinite burger stack $s'$. For this the main tool is the following estimate due to Sheffield (which was already at the heart of \cite{She11}).

 \begin{lemma}[Lemma 3.7 in \cite{She11}]\label{lem:F}
Let $F_n$ denote the number of $\aF$ symbols in $\overline{X_0X_1 \ldots X_n}$. 
Then for all $\ve>0$,
\[
 \P\left(\frac{F_n}{\sqrt{n}} >\ve\right) \to 0.
\]
as $n \to \infty$.
\end{lemma}

Fix $\eta \in (0,\ve/2)$ be such that for $x,y$ in the annulus 
$A(1/2)$ 
such that $|x-y|<2\eta$ we have $|f(x) - f(y)| < \delta/4$. Let $h< \eta$ be such 
that $ \P^x[\sup_{\tau(B,  \epsilon-h) \leq t_1, t_2 \leq \tau(B,  
\epsilon+h)} | B_{t_1} - B_{t_2} | > \eta/2] < \delta/(20 M)$ for any $x\in \R^2$. 
Reasoning as in \cref{eq:6} and \cref{eq:7}, we also have that 
\begin{equation} \label{eq:H}
\P^{v,s}\left[  \cH  \right] \le \frac{\delta}{12M}, \text{ where } \cH
=\left\{ \sup_{t_1, t_2 \in [ \tau( \bd,  \eps
    -h), \tau (  \bd ,  \eps+h)] } | \bd_{t_1} - \bd_{t_2}| >  \eta |\bd_0|  \right\}
\end{equation}
(Note for later use that since $\eta$ depends only on $\delta$, $h$ depends only on $\delta$ which depends only on $f$).

Using \cref{lem:F}, we can assume that the choice of $r_0$ (depending only on 
$\delta,M$) is such that for all $r > r_0$, the number $F_r$ of $\aF$ symbols in
$\overline{X_1 \ldots X_r}$ satisfies 
\begin{equation}
  \label{eq:10}
  \P(F_r > h \sqrt{r/K}) \le
\frac{\delta}{12M}.
\end{equation}
 Let $v$ be a fixed 
lattice point in $\cC(\ve,\theta_0-\ve)$ with $|v|>r_0$ with this choice of 
$r_0$. Let $ \bd_t(s) $ denote the walk $\bd$ under $\P^{v,s}$. Observe that if 
$s'$ is another arbitrary burger stack, then 
$$
\sup_{0 \le t \le r}
| \bd_t(s) - \bd_t (s')| \le F_r .
$$ 
Define 
the bad event $\cB$ to be 
$$\cB = \{F_{K|v|^2} > h |v|\} \cup \{\tau( \bd(s),2\ve) >
 K |v|^2\} \cup \cH 
 $$  
On $\mathcal B^c$, the maximal
distance between the paths $ \bd(s)$ and $ \bd (s')$, up to time $K|v|^2$,
is at most $h|v|$. Since on that event we also have $\tau(\bd(s),2\ve) \le 
K|v|^2$, and since $h<\eta<\ve/2$ we deduce $\tau(\bd(s'),\ve) \le K|v|^2$. 
These properties also imply 
$$\tau (  \bd (s'),  \eps ) \in [\tau (  \bd(s) , \eps - h) , \tau ( \bd(s) ,  \eps + h)].
$$ 
Hence by definition of $\cH$, if $\tau = \tau (\bd (s),  \eps)$ and $\tau' = \tau( \bd (s'), \eps)$, 
\begin{align*}
|\frac{\bd_\tau (s)}{|v|} - \frac{\bd_{\tau'} (s')}{|v|} | & 
\le  
|\frac{\bd_\tau (s)}{|v|} -  \frac{\bd_{\tau'} (s)}{|v|} | + 
|\frac{\bd_{\tau'} (s)}{|v|} - \frac{\bd_{\tau'} (s')}{|v|} |\\
&\le \eta + h\le 2\eta.
\end{align*}
Hence by the choice of $\eta$, still on the good event $\cB^c$,
\begin{equation}
  \label{eq:9}
  \left|  f(Z\left(\tfrac{   \bd(s)}{|v|} \right))  - f(Z\left( \tfrac{\bd (s')}{|v|}  \right))\right|< \frac \delta4. 
\end{equation}
But using \cref{eq:H}, \eqref{eq:10} and \eqref{eq:7}, $\P(\mathcal B) <
\delta/(4M)$. Hence  using \eqref{eq:9},
\begin{equation}
  \label{eq:15}
   \left|  \E^{v,s} \big[f(Z\left(\tfrac{   \bd }{|v|} \right))  \big] - \E^{v,s'} \big[ f(Z\left( \tfrac{ \bd }{|v|}  \right)) \big] \right|
 \le \frac \delta{4} + 2M\frac{\delta}{4M}=3\delta/4
\end{equation}
Using \eqref{eq:6} [the desired inequality for the fixed burger stack $s$] and \eqref{eq:15},
$$
  \left| \E^{v,s'} \big[ f(Z\left( \tfrac{ \bd }{|v|}  \right)) \big]  - \E^{v/|v|}\big[ f(Z\left(B\right)) \big] \right|  \le
  \frac{7\delta}{8} 
$$
Combining with \cref{eq:4} [the inequality for Brownian motion], we deduce that 
$$
 \E^{v,s'} \big[ f(Z\left( \tfrac{ \bd }{|v|}  \right)) \big]  \ge f(\tfrac{v}{|v|}) + \delta/8.
$$
Using homogeneity of $f$,
\begin{equation}
  \label{eq:14}
\E^{v,s'} [ f (Z( \bd))] = |v|^d \E^{v,s'} \big[ f(Z\left( \tfrac{ \bd }{|v|}  \right)) \big]  > |v|^{d} f( \tfrac{v}{|v|}) = f(v).
\end{equation}
This proves our claim \cref{eq:5'} which, as discussed earlier, implies the proposition. 
\end{proof}

We can now begin the proof of \cref{lem:wrap_up_lemma}. We will focus on \cref{wrap1} as the proof of \cref{wrap2} is identical (with only steps 1 and 3 below needed).
We start by recalling the formula for the Laplacian in polar coordinates which we will use repeatedly: if $f(r,\theta):= r^d\varphi(\theta)$ where $d \in \R$,
\begin{equation}
\Delta f(r,\theta) = r^{d-2}(d^2\ph(\theta) + \ph''(\theta))\label{eq:Lap}.
\end{equation}

We use perturbations of the harmonic functions in the cone as sketched in 
\cref{sec:sketch} to construct appropriate supermartingales.

For $x>0$ and a process $(Z_k)_{k \ge 0}$ define
\begin{align}\label{e:T}
T_x^+(Z) &= \min\{k \ge 1, |Z_k| \ge x\}, \quad  T_x^-(Z) = \min \{k \ge 1,
|Z_k| \le x\}, \\ 
T_{x,n}^-(Z) &= \min \{k \ge 1,
|Z_k + n\bv_0| \le x\}.
\end{align}
\paragraph{Step 1} (going out to distance $n/2$). $ \displaystyle{\P^{0,s}\big(  
T^+_{n/2}(\bd) < T^*_{\theta_0,n} (\bd) \big)
\le \frac{C(\ve)}{n^{1-\ve}} }$:

\smallskip
\begin{proof}[Proof of Step 1.] {Recall that this probability is roughly 
the probability to go to distance $n/2$ in some half plane before returning to 
$0$.}   Choose $\ve$ small enough so that $\theta_0+\ve<\pi$.
Consider the cone $\cC(\theta_0 - 
\pi-\ve,\theta_0+\ve)$. Consider the function
\begin{equation}
 g_1^\u(r,\theta) := r^{1-\ve}\sin\left(\frac{\pi}{\pi+2\ve}(\theta - 
\theta_0+\pi+\ve)\right).\nonumber
\end{equation}
We can assume $\ve>0$ is small enough so that $1-\ve< \frac{\pi}{\pi+2\ve}$. It 
is easy to check by \eqref{eq:Lap} that $\Delta g_1^\u <0$ and $g_1^\u >0$ in 
$\cC(\theta_0 - \pi-\ve,\theta_0+\ve)$. By  \cref{lem:sub_super_mart} we can choose $r_0(\ve)$ large enough (depending on $g_1^\u$)  
so that $g_1^\u(Y(\ve))$ is a supermartingale until it 
leaves $\cC(\theta_0 - \pi-\ve,\theta_0+\ve)$. 

Let 
$n':= n(1-\ve)$. Let $$\tau = T^+_{n'/2}(Y(\ve/2)) \wedge T^*_{\theta_0 - 
\pi,\theta_0} (Y(\ve/2)) \wedge T^-_{r_0}(Y(\ve/2))$$ and 
let $v$ be any lattice point in $\cC(\theta_0-\pi,\theta_0)$ such that $r_0<|v| < 3r_0$. Observe that 
$$
T^+_{n/2}(\bd) 
< T^*_{\theta_0 - \pi,\theta_0} (\bd) \wedge 
T^-_{r_0}(\bd)  \text{ 
implies }   T^+_{n'/2}(Y(\ve)) 
< T^*_{\theta_0 - \pi,\theta_0} (Y(\ve/2)) \wedge T^-_{r_0}(Y(\ve/2)) 
$$
Hence since $g_1^\u$ is nonnegative, we obtain by optional stopping theorem, 
\begin{align}
 (3r_0)^{1-\ve}\ge g_1^\u(v) & \ge \E^{v,s}(g_1^\u(Y_\tau(\ve/2))\nonumber\\
& \ge a (n'/2)^{1-\ve}\P^{v,s}(T^+_{n'/2}(Y(\ve/2)) 
< T^*_{\theta_0 - \pi,\theta_0} (Y(\ve/2)) \wedge T^-_{r_0}(Y(\ve/2)))\nonumber\\
& \ge  c n^{1-\eps} \P^{v,s}(T^+_{n/2}(\bd) 
< T^*_{\theta_0 - \pi,\theta_0} (\bd) \wedge 
T^-_{r_0}(\bd) )\label{eq:upper1}
\end{align}
where $a$ is the minimum value of the angular part of $g_1^\u$ on $\cC_{\theta_0- \pi - \eps/2, \theta_0 + \eps/2}$. In particular, $a$ and thus the constant $c$ is bounded below independently of $n$, and depends only on $\eps$ as desired. 
This proves the required bound for the walk starting from any vertex $v$ at a distance between $r_0$ and $3r_0$ which is also stopped if it comes within distance $r_0$ or the origin. 

Now we argue that this additional stopping does not matter. 
Indeed, if the walk reaches distance $n'/2$ it reaches distance more than $r_0$ at some point. Let $N$ be the number of intervals of times the walk is within distance $r_0(\ve)$ before $T^*_{\theta_0,n}$. Since the walk has $c(\ve)$ probability to exit the cone from within distance $r_0$ before reaching distance $2r_0$, we see that $N$ has exponential tail (with constants depending only on $\ve$). 
So we can restrict to the event $N \le \log^2n$: more precisely, by a union bound,
$$
\P^{0,s}( T^+_{n/2}(\bd) < T^*_{\theta_0,n}) \le \P( N \ge (\log n)^2) + (\log n)^2 \sup_v  \P^{v,s}(T^+_{n/2}(\bd) 
< T^*_{\theta_0 - \pi,\theta_0} (\bd) \wedge 
T^-_{r_0}(\bd) )
$$
where the sup is over vertices $v$ at distance between $r_0$ and $3r_0$ from the origin. We deduce from \cref{eq:upper1} which is uniform over $v$ in this range the desired upper bound.
\end{proof}

\paragraph{Step 2} (from distance $n/2$ to $m$). $\displaystyle{ \P^{v,s}(T^+_m(\bd) < T^*_{\theta_0,n}(\bd)) 
\le \frac{C(\ve)n^{2p_0+\ve}}{m^{2p_0 - \ve}}}$   
for any vertex $v$ with 
$n/4<|v+ n \bv_0| < 3n/4$, and $m \ge n (\log n)$.

\begin{proof}[Proof of Step 2.] This is similar to step $1$, with a few differences as follows. First, by translation invariance, it suffices to prove the result with $\cC_{n}(\theta_0)$ replaced by $\cC(\theta_0)$ and $(n/4) \le |v| \le 3n/4$. Consider the function (recall $2p_0 = \pi/\theta_0$)
\begin{equation*}
 g_2^\u(r,\theta) := 
r^{2p_0-5\ve}\sin\left(\frac{\pi}{\theta_0+2\ve}(\theta+ \ve)\right)
\end{equation*}
Clearly $g_2^\u > 0$ and $\Delta g_2^\u<0$ in $\cC(-\ve,\theta_0+\ve)$ by 
\eqref{eq:Lap}. By  
\cref{lem:sub_super_mart} we can choose $r_0$ such that $g_2^\u (Y(\eps/2))$ is a supermartingale until it leaves this cone. Let 
$$
\tau = T^*_{\theta_0}(Y(\ve/2)) \wedge 
T^+_{m(1-\ve)}(Y(\ve/2))\wedge T^-_{r_0}(Y(\ve/2)).
$$  
Let $v$ be any vertex with $n/4<|v| < 3n/4$. By optional stopping theorem, 
writing a similar chain of inequalities as in Step 1 :
\begin{equation}
n^{2p_0 - 5\eps}\ge  |v|^{2p_0-5\ve} \ge g_2^\u(v) \ge  c m^{2p_0-6\ve}\P^{v,s}(T^+_{m}(\bd) < 
T^*_{\theta_0}(\bd) \wedge T^-_{r_0}(\bd)).
\end{equation}
We complete the proof by the same argument as in step 1 (showing that the time spent within $B(0, r_0)$ before $T^*_{\theta_0}$ has exponential tail).\end{proof}

\paragraph{Step 3} (from distance $m$ to the tip of the cone). $\displaystyle{ \P^{v,s}(T^-_{r_0}(\bd) < 
T^*_{\theta_0,n}(\bd)) \le \frac{C(\ve)}{m^{2p_0 - \ve}}} $
 for any vertex 
$v$ with $|v| > m$ and $m > n(\log n)$.

\begin{proof}[Proof of Step 3.]
 For this step, again by translation invariance we replace $\cC_n(\theta_0)$ by 
$\cC(\theta_0)$ and assume that the starting point $v$ is at a distance $m$ 
from the origin since $m \ge n (\log n)^3$. 
Consider the function
\begin{equation*}
 g_3^\u(r,\theta) := 
r^{-2p_0+5\ve}\sin\left(\frac{\pi}{\theta_0+2\ve}(\theta+ \ve)\right)
\end{equation*}
and observe that now the exponent in the power of $r$ is negative. 
Using a similar chain of arguments as in steps 1 and 2 (note that the harmonic 
function used here is bounded so we can use optional stopping), we obtain
\begin{equation}
  m^{-2p_0+5\ve} \ge g_3^\u(v) \ge c \P^{v,s}(T_{r_0}^-(\bd) < 
T^*_{\theta_0}(\bd))
\end{equation}
for some constant $c = c(\eps)$.
 
\end{proof}

To put together these three steps and finish the proof of the upper bound in \cref{wrap1}, we need the following observation.

\begin{lemma}\label{lem:time_bound}
Fix an infinite burger stack $s$. There exist positive constants
$c,c'$ (independent of $s$) such that,
\[
\P^{0,s}\left(T^+_{m (\log m)^2}(\bd) \ge m^2, 
T^+_{\frac{m}{\log m}}(\bd) \le
  m^2  \right) \ge 1-c\exp(-c'\log^2m)
\]
\end{lemma}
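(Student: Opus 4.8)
The plan is to deduce both inequalities from Sheffield's invariance principle (\cref{thm:she}), the fact that the walk $\bd$ has increments bounded by a fixed constant (since $\bd=\Lambda(\UU^x,\UU^y)$ with $\Lambda$ a fixed linear map and the increments of $\UU^x,\UU^y$ lie in $\{-1,0,1\}$), and the Markov property; the stretched-exponential error comes from combining many asymptotically independent pieces of the walk. Writing $A=\{T^+_{m(\log m)^2}(\bd)\ge m^2\}$ and $B=\{T^+_{m/\log m}(\bd)\le m^2\}$, it is enough to show $\P^{0,s}(A^c)$ and $\P^{0,s}(B^c)$ are each at most $c\,e^{-c'\log^2 m}$, uniformly in $s$; note $A^c\subseteq\{\max_{k\le m^2}|\bd_k|\ge m(\log m)^2\}$ and $B^c=\{\max_{k\le m^2}|\bd_k|<m/\log m\}$.

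For the lower bound (controlling $\P^{0,s}(B^c)$), I would cut $[0,m^2]$ into the $N:=\lfloor(\log m)^2\rfloor$ consecutive blocks $[(j-1)\ell,j\ell]$ with $\ell:=\lfloor m^2/N\rfloor$, so $\sqrt\ell\asymp m/\log m$. By \cref{thm:she} applied to one block rescaled by $\sqrt\ell$, there is $c_0>0$ such that, conditionally on the burger stack at the start of a block (equivalently, on the whole past up to that time), the walk displaces by at least $2m/\log m$ inside that block with probability at least $c_0$, for all $m$ large; this lower bound is uniform over the initial stack by exactly the argument in the proof of \cref{lem:sub_super_mart} (the dependence of the rescaled walk on the stack washes out, controlled via \cref{lem:F}). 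Applying the Markov property one block at a time, the probability that every block fails is at most $(1-c_0)^{N}\le e^{-c'\log^2 m}$. On the complement some block succeeds, so $|\bd_k-\bd_{k'}|\ge 2m/\log m$ for some $k'\le k\le m^2$, forcing $\max(|\bd_k|,|\bd_{k'}|)\ge m/\log m$ and hence $B$.

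For the upper bound (controlling $\P^{0,s}(A^c)$) I would instead use a single maximal concentration inequality, exploiting boundedness of increments. The combination $\UU^x+\UU^y$ is, directly from \eqref{eq:3}, a simple symmetric random walk (its step is $+1$ precisely when $X_i\in\{\ac,\ah\}$, an event of probability $1/2$ independent of everything), so by the reflection principle and Hoeffding's inequality $\P^{0,s}(\max_{k\le m^2}|\UU^x_k+\UU^y_k|\ge a)\le 4e^{-a^2/(2m^2)}$. The complementary combination $\UU^x-\UU^y$ is not quite a martingale: its conditional increment given the past equals $-\tfrac p2\sigma_{k-1}$, where $\sigma_{i}\in\{\pm1\}$ records the type of the burger on top of the stack after step $i$; thus $\UU^x_k-\UU^y_k=M_k-\tfrac p2\sum_{i<k}\sigma_i$ with $M$ a martingale whose increments are bounded by $2$. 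Azuma's inequality handles $M$, and the remaining drift is controlled once $\max_{k\le m^2}|\sum_{i<k}\sigma_i|\le m(\log m)^2$ with probability $1-e^{-c\log^2 m}$ uniformly in $s$. Combining these with $a\asymp m(\log m)^2$ gives $\P^{0,s}(\max_{k\le m^2}|\bd_k|\ge m(\log m)^2)\le e^{-c(\log m)^2}$, and a union bound with the lower bound finishes the proof.

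The main obstacle is the last ingredient above: the non-martingale, top-of-stack drift term $\sum_{i<k}\sigma_i$, which must be shown to stay of order $m\,\mathrm{polylog}(m)$ with stretched-exponential probability, uniformly over the initial stack. I expect to obtain this from the mixing/stationarity properties of the burger stack — the sign of the top of the stack is ``refreshed'' to a fair coin at every production, i.e.\ at a positive rate per step and independently of the past — along the lines of \cite{She11,chen}; alternatively one can run the same block-plus-Markov argument used for the lower bound on $\sum_i\sigma_i$ directly. Everything else is a routine combination of the invariance principle, bounded-increment concentration, and the strong Markov property.
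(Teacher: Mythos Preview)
Your treatment of $B^c$ (the walk reaching distance $m/\log m$ by time $m^2$) is correct and essentially identical to the paper's: cut time into $(\log m)^2$ blocks, use the invariance principle together with \cref{lem:F} to get a uniform-in-stack positive probability of a large displacement in each block, and take the product via the Markov property.

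For $A^c$, however, your decomposition has a genuine gap. You correctly write $\UU^x-\UU^y = M_k - \tfrac{p}{2}\sum_{i<k}\sigma_i$ with $M$ a bounded-increment martingale, but the difficulty is then entirely concentrated in the drift $\sum_i\sigma_i$, and neither of your suggested fixes controls it. The ``refresh'' heuristic is misleading: $\sigma_i$ is a fresh fair coin at each production, but at every order that pops the top of the stack the new top is determined by the entire past (in particular by the arbitrary initial stack $s$), so the values of $\sigma_i$ between productions are neither independent nor centred, and there is no regenerative structure to exploit. The block-plus-Markov argument goes in the wrong direction: it shows that a good event occurs in \emph{some} block, whereas here you would need a quantitative tail bound in \emph{every} block, and getting such a per-block bound is precisely the estimate you are after. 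Indeed, since $\UU^x+\UU^y$ and $M_k$ already concentrate by Hoeffding/Azuma, controlling $\sum_i\sigma_i$ is equivalent (up to those easy bounds) to controlling $|\bd_k|$ itself---so the reduction is circular. The paper bypasses all of this by directly invoking Lemma~3.12 of \cite{She11}, which gives $\P^{0,s}(|\bd_n|>a\sqrt n)\le ce^{-c'a}$ uniformly in $s$; a union bound over the $m^2$ times then yields $\P^{0,s}(A^c)\le ce^{-c''\log^2 m}$ immediately.
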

\begin{proof}
We are going to drop $\bd$ from $T^+_{m (\log m)^2}(\bd), 
T^+_{m/\log m }(\bd)$ to ease notation.
Using Lemma 3.12 of
  \cite{She11} (which proves that the probability of the walk 
$|\bd_n|$
  being greater than $a\sqrt{n}$ is at most $ce^{-c'a}$), we see that 
$$\P^{0,s}\left(T^+_{m(\log m)^2} \le
  m^2 \right) \le ce^{-c'\log^2m}.$$

On the other hand, from any lattice point, the walk $\bd$ has a
positive probability to reach a vertex $v$ at distance $r_0$ from the origin. By the invariance principle \cref{thm:she}, $$\P^{0,s} ( |\bd_{m^2/ (\log m)^2} | \ge m/\log m) \ge c_0.$$
If the
walk fails to reach distance $m/\log m$, let $v'$ be the position of the walk at time $m^2/(\log m)^2$, and let $s'$ be the burger stack at that time. We now iterate this argument by using the Markov property and 
and
\cref{lem:F}. Let $t_i = i m^2 /(\log m)^2$, $i = 1, \ldots (\log m)^2$. Call a time $t_i$ good if the following two conditions hold:
\begin{itemize}
\item  the number of $\aF$ symbols in the reduced word $\overline{X_{t_i}\ldots X_{t_{i+1} - 1}}$ is less than $m/( \log m)$;  
\item $|\bd'_{t_{i+1} -t_i}| \ge 2m/( \log m)$, where $\bd'$ is the walk corresponding to the symbol sequence 
$\{X_{t_i+j}\}_{j\ge 1}$ with the fixed initial stack $ s$.
\end{itemize}
Note that for each $i\ge 1$, conditionally on $(X_0, \ldots X_{t_i - 1})$, $t_i$ is good with probability at least $c_0/2$ by \cref{lem:F}. Furthermore if one of the $t_i$ is good then $\bd$ must have reached distance $m/\log m$.
This completes the lemma.
\end{proof}

\begin{proof}[Proof of upper bound in \cref{wrap1}]
We combine all three steps together using \cref{lem:time_bound}. 
Observe that
\begin{align}
\P^{0,s}( T^*_{\theta_0,n}> m^2 ; E^*_n) &\le  \P^{0,s}( T^*_{\theta_0,n}> m^2 ; E^*_n ; T^+_{m/\log m} \le m^2) +   \P^{0,s} (T^+_{m/\log m} > m^2)
\\
& \le \P^{0,s}(T^+_{n/2} < T^*_{\theta_0, n})\sup_{v,s'} \P^{v,s'} (T^+_{m/\log 
m} < T^*_{\theta_0, n}) \sup_{v',s''}\P^{v',s''}(T^-_{r_0} < T^*_{\theta_0, 
n})\nonumber\\
& \quad \quad  + \P^{0,s} (T^+_{m/\log m} > m^2)\label{e:negli}.
\end{align}
where the sups are respectively over burger stacks $s'$ and vertices $v$ at 
distance $n/2 + O(1)$; and  burger stacks $s''$ and vertices $v'$ at distance 
$m/\log m + O(1)$. Note that the final term on the right hand side of 
\eqref{e:negli} is negligible compared to the first term via 
\cref{lem:time_bound} which completes the proof of upper bound.
\end{proof}

\def \l {\text{lower}}
We now begin the proof of the \textbf{lower bound} of \cref{lem:wrap_up_lemma}.
 The strategy is the same as that in the proof of upper bound except now we 
need to perturb the harmonic functions in \cref{sec:sketch} so as to obtain 
submartingales which takes negative values in a neighbourhood of the 
boundary of the cone.

We will need to lower bound the probability that the walk exits the ball of radius 
$n/2$ from $0$ before exiting a cone which has angle slightly less than $\pi$. 
Consider the following harmonic function
\begin{equation}
 g_1^\l (r,\theta) := r^{1+\ve}\sin\left( \frac{\pi}{\pi-2\ve}(\theta - \theta_0+\pi-\ve) 
\right)
\end{equation}
Since $1+\ve>\frac{\pi}{\pi-2\ve}$ for all small enough $\ve$, we see that 
$\Delta g_1^\l >0$ in $\cC(\theta_0-\pi+\ve,\theta_0-\ve)$. Note that 
$g_1^\l<0$ just outside the boundary of the cone which is desirable, but 
$\Delta g_1^\l<0$ just outside the boundary of this cone, that is $\Delta 
g_1^\l<0$ in $\cC(\theta_0-\pi,\theta_0-\pi+\ve)$ and 
$\cC(\theta_0-\ve,\theta_0)$ which is not desirable. So we wish to modify 
$g_1^\l$ slightly to make $\Delta g_1^\l>0$ not only in the cone $\cC(\theta_0-\pi+\ve,\theta_0-\ve)$ but also in some slightly bigger cone. We moreover wish to do so while keeping the values of the function on the boundaries of this bigger cone negative.

We show 
how to do this modification in a neighbourhood of $\{z:\arg(z) = 
\theta_0-\ve\}$ while in the other boundary the modification follows the same 
trick which we will skip.

For notational convenience let $\ph(\theta) = 
\sin\left(\frac{\pi}{\pi-2\ve}(\theta - \theta_0+\pi-\ve)\right)$ to be the 
angular part of $g_1^\l$. The planned modification is achieved in the following 
tedious but elementary single variable calculus 
problem.

\begin{lemma}\label{claim:negative}
For all $\ve > 0$ small enough, there exist $\delta =\delta(\ve)\in (0,\ve)$ 
small 
enough, and constants $a(\ve),b(\ve),c(\ve),d(\ve)$ such that, if $\tilde \theta = 
\theta_0 - \eps - \delta$, and if
$$
\tilde \ph(\theta) = \begin{cases}
\ph(\theta) & \text{ if } \theta\le \tilde \theta \\
s(\theta) &\text{ if }\theta\in [\tilde \theta, \theta_0]
\end{cases}
$$
where 
 $$
 s(\theta) := a +b (\theta -
\tilde \theta) +c (\theta - \tilde \theta)^2 + d (\theta -
\tilde \theta)^3;\quad \quad \theta \in [\tilde \theta , \theta_0]
$$
then $\tilde g(r, \theta) = r^{1+\eps} \tilde \ph(\theta)$ defines a $C^2$ function in the cone $\cC(\theta_0 -\pi + \eps, \theta_0)$ and moreover $\tilde \ph (\theta_0) <0$ and $\Delta  \tilde g > 0 $ in this cone.
\end{lemma}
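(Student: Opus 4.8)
The plan is to treat this as the elementary one-variable problem that it is, using the cubic coefficient $d$ as a free parameter and deferring the choice of $\delta$ to the very end. Imposing $C^2$ matching of $s$ with $\ph$ at $\tilde\theta$ forces the three lower coefficients, $a=\ph(\tilde\theta)$, $b=\ph'(\tilde\theta)$, $c=\ph''(\tilde\theta)/2$, and this alone makes $\tilde g=r^{1+\ve}\tilde\ph(\theta)$ a $C^2$ function on the (open) sector $\cC(\theta_0-\pi+\ve,\theta_0)$, as it is $r^{1+\ve}$ times a $C^2$ function of the angle, away from the origin. Using $\tilde\theta-\theta_0+\pi-\ve=\pi-2\ve-\delta$ together with $\sin(\pi-x)=\sin x$ and $\cos(\pi-x)=-\cos x$, one computes
\[
a=\sin\left(\frac{\pi\delta}{\pi-2\ve}\right),\qquad b=-\frac{\pi}{\pi-2\ve}\cos\left(\frac{\pi\delta}{\pi-2\ve}\right),\qquad c=-\frac12\left(\frac{\pi}{\pi-2\ve}\right)^2\sin\left(\frac{\pi\delta}{\pi-2\ve}\right),
\]
so that for $\delta\in(0,\ve)$ and $\ve$ small, $a$ and $c$ are $O(\delta)$ while $b$ is negative and bounded away from $0$ and from $-\infty$, uniformly in such $\delta$. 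This uniform nondegeneracy of $b$ is the point that lets me fix $d$ as a function of $\ve$ alone before sending $\delta\downarrow0$.

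Next I handle the Laplacian via \eqref{eq:Lap} with homogeneity exponent $1+\ve$: for any $C^2$ angular function $\chi$, $\Delta\bigl(r^{1+\ve}\chi(\theta)\bigr)=r^{\ve-1}\bigl((1+\ve)^2\chi(\theta)+\chi''(\theta)\bigr)$. On the sub-cone $\cC(\theta_0-\pi+\ve,\tilde\theta)$ we have $\tilde g=g_1^{\l}$ and $\Delta g_1^{\l}>0$ is already known (this sub-cone lies inside $\cC(\theta_0-\pi+\ve,\theta_0-\ve)$), so it suffices to prove $\psi(\theta):=(1+\ve)^2 s(\theta)+s''(\theta)>0$ for $\theta\in[\tilde\theta,\theta_0]$. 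At the junction, $\psi(\tilde\theta)=\bigl((1+\ve)^2-(\tfrac{\pi}{\pi-2\ve})^2\bigr)\ph(\tilde\theta)>0$, using $1+\ve>\tfrac{\pi}{\pi-2\ve}$ for $\ve$ small — this is precisely why the exponent was chosen to be $1+\ve$ rather than $1$ — and $\ph(\tilde\theta)=a>0$. Since $s$ is a cubic, $\psi$ is a cubic in $\theta-\tilde\theta$ with $\psi'(\tilde\theta)=(1+\ve)^2 b+6d$; I choose $d=d(\ve)>0$, a constant of order $1$, large enough that $(1+\ve)^2 b+6d\ge1$ for all $\delta\in(0,\ve)$, which is possible by the uniform lower bound on $b$. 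On the whole interval $[\tilde\theta,\theta_0]$, whose length is $\ve+\delta$, the remaining terms of $\psi'$, namely $(1+\ve)^2\bigl(2c(\theta-\tilde\theta)+3d(\theta-\tilde\theta)^2\bigr)$, are $O(\delta\ve+\ve^2)$; hence taking $\delta$ small (and $\ve$ small) makes $\psi'\ge\tfrac12>0$ throughout, so $\psi$ is increasing on $[\tilde\theta,\theta_0]$ and $\psi\ge\psi(\tilde\theta)>0$ there. Together with the sub-cone estimate this gives $\Delta\tilde g>0$ on all of $\cC(\theta_0-\pi+\ve,\theta_0)$.

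Finally I check $\tilde\ph(\theta_0)=s(\theta_0)<0$ by expanding $s(\theta_0)=a+b(\ve+\delta)+c(\ve+\delta)^2+d(\ve+\delta)^3$: here $a=O(\delta)$, $c(\ve+\delta)^2=O(\delta\ve^2)$ and $d(\ve+\delta)^3=O(\ve^3)$ with $d$ now fixed, whereas $b(\ve+\delta)$ is negative of order $\ve$ because $b$ is bounded away from $0$; so for $\delta$ sufficiently small (depending on $\ve$) this term dominates and $s(\theta_0)<0$. Choosing $\delta=\delta(\ve)$ to meet both this requirement and the one from the previous paragraph finishes the proof. The only thing that needs care — the main obstacle, such as it is — is the order of the quantifiers: one must confirm that $a,b,c$ behave uniformly as $\delta\downarrow0$, and in particular that $b$ stays bounded away from $0$, so that $d$ can legitimately be chosen depending on $\ve$ only and $\delta$ pushed small at the end.
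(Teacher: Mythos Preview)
Your proof is correct and takes a cleaner, genuinely different route from the paper's. Both arguments force $a,b,c$ by $C^2$ matching at $\tilde\theta$ (you correctly write $c=\ph''(\tilde\theta)/2$; the paper writes $c=\ph''(\tilde\theta)$, a harmless slip since $c=O(\delta)$ either way). The substantive divergence is in the choice of $d$ and the Laplacian verification. The paper takes $d=1/(p\ve)$ large and $\delta=(p/3)\ve$ fixed, then on $[\tilde\theta,\theta_0-\ve]$ compares $s,s''$ to $\ph,\ph''$ via a Taylor expansion (showing $s\ge\ph$, $s''\ge\ph''$, hence $(1+\ve)^2 s+s''\ge(1+\ve)^2\ph+\ph''>0$), and treats $[\theta_0-\ve,\theta_0]$ by a separate direct computation exploiting the largeness of $d$. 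You instead keep $d=O(1)$ and defer $\delta\downarrow0$, then observe that $\psi=(1+\ve)^2 s+s''$ has $\psi(\tilde\theta)>0$ and $\psi'(\tilde\theta)=(1+\ve)^2 b+6d\ge1$, with the remaining terms of $\psi'$ being $O(\ve^2)$ on the whole of $[\tilde\theta,\theta_0]$; this gives $\psi$ increasing from a positive value without any case split or Taylor comparison. Your choice also makes $s(\theta_0)<0$ transparent, since $d(\ve+\delta)^3=O(\ve^3)$ is dominated by $b(\ve+\delta)\sim-\ve$, whereas the paper's large $d$ forces a finer balance there.

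One small point worth tightening: when you say ``$d=d(\ve)$, a constant of order $1$'', make explicit that $d$ can be chosen \emph{bounded uniformly as $\ve\to0$} (e.g.\ $d=1$ suffices, since $(1+\ve)^2 b\ge-(1+\ve)^2 p>-2$ for small $\ve$). Otherwise the later step ``$\ve$ small enough makes the $O(\ve^2)$ remainder of $\psi'$ less than $1/2$'' looks circular, because the implicit constant in that $O(\ve^2)$ depends on $d$.
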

\begin{proof}
Observe that the function above is trivially $C^2$ if we choose $a = \varphi(\tilde \theta)$, $b = 
\varphi'(\tilde \theta)$ and $c = \varphi''(\tilde \theta)$ for any choice of 
$\delta \in (0,\ve)$. We now assume this in the following. 
Observe also that by construction $\ph(\theta_0-\ve) = 0$ so as $\delta \to 0$, 
$a \sim p\delta$ where $p = \pi/(\pi - 2\eps)$, $b = -p + o(1)$ and $c \sim -p^3 \delta$. Furthermore all 
the smaller order terms can be bounded independently of $\ve$. In particular if 
we take $\delta = (p/3) \ve$, we have for $\ve$ small enough, $a < 
\frac{p}{2}\ve$, $b < -2p/3$ and $- 8 \ve< c < 0$ (we have $p < 2$).
%
 Now fix $d = 1/(p\ve)$. 
 This choice ensures that
\begin{align*}
  s(\theta_0) = a + b(\ve + \delta) + c(\ve + \delta)^2 + d(\ve + \delta)^3 
  \le \frac{p}{2} \ve -\frac{2}{3}p\ve + 8 \ve^2/p
\end{align*}
which is negative for $\ve$ small enough.

Now let us control the Laplacian, recalling its expression $\Delta \tilde g 
= r^{p'-2}(p'^2 \tilde \ph(\theta)+ \tilde \ph''(\theta))$ with $p' = 1+\eps$. By a Taylor expansion with explicit 
remainder we have for all $\eps>0$, $\theta\in [\tilde \theta, \theta_0]$,
\begin{align*}
  \Big| \varphi(\theta )-s(\theta) - (\theta -\tilde \theta)^3(\varphi'''(\tilde \theta) 
-6d)\Big|
&
 \leq (p^4/4!)(\theta-\tilde \theta)^4
\end{align*}
Also $\phi'''(\tilde \theta) \to p^3 < 6d$ as $\ve\to 0$. 
Therefore if $\ve$ is small 
enough, then for all $\theta \in [\tilde \theta, \theta_0- \eps]$ we have $s(\theta) > \ph(\theta)$. 
Likewise,
\begin{align*}
\Big|\varphi''(\theta) - s''(\theta) -(\theta-\tilde \theta) (\varphi'''(\tilde \theta) 
-6d )\Big|
&\le (p^4/2!)(\theta -\tilde \theta)^2
\end{align*}
and thus $s''(\theta) > \ph''(\theta)$ on $[\tilde \theta, \theta_0 - \eps]$. Consequently,
for $\theta\in[\tilde \theta, \theta_0 - \ve)$, recalling $p' = 1+\eps$, we have 
$$p'^2s 
(\theta)+s''(\theta)
> p'^2 \varphi(\theta) + \varphi''(\theta)>0.$$
Furthermore for $\theta 
\in[\theta_0-\ve, \theta_0]$, we have $(p/3) \ve \le \theta-\tilde \theta \leq 2\ve$ 
and
\begin{align*}
p'^2s(\theta) +s''(\theta) & = \big(p'^2
(a+ b (\theta - \tilde \theta) + c(\theta - \tilde \theta)^2 + d( \theta - \tilde \theta)^3\big) + 2c + 6d
(\theta - \tilde \theta) \\
& >-p'^2 (|b|2\ve + |c|(2\ve)^2) - 2|c| +6\frac{1}{p\ve}\frac{p}{3}\ve >0
 \end{align*}
which concludes the proof.
\end{proof}

With this lemma in hand we can start the first step of the proof of the lower bound in \cref{lem:wrap_up_lemma}.

\paragraph{Step 1.} (going out to distance $n/2$). There exists 
$0<\eta_1<\ve$,
\begin{equation}
\P^{0,s}(T^{+}_{n/2}(\bd) <T^*_{\theta_0-\pi+\eta_1,\theta_0-\eta_1}(\bd)) \ge 
\frac{c(\ve)}{n^{1+\ve}}.
\end{equation}

\begin{proof}[Proof of Step 1.]We choose $\ve$ small enough and $\delta(\ve)>0$ 
as in 
\cref{claim:negative} and replace $\ph(\theta)$ by $s(\theta)$ for $\theta \in 
(\theta_0-\ve-\delta,\theta_0]$ and by an analogous function in 
$(\theta_0-\pi+\delta+\ve,\theta_0-\pi]$ and still call the modified function 
$g_1^\l$ by an abuse of notation. By construction, $\Delta g_1^\l>0$ in the 
interior of $\cC(\theta_0-\pi,\theta_0)$ and 
$g_1^\l(r,\theta_0-\pi)<0,g_1^\l(r,\theta_0)<0$ for any $r>0$. Further, 
$g_1^\l>0$ at any point 
with angle $\theta_0-\pi+\delta+\ve$ or $\theta_0-\delta-\ve$. By continuity, 
$g_1^\l(r,\theta_0-2\delta_1)=0$ for some $\delta_1>0$ and 
$g_1^\l(r,\theta_0 -\pi+2\delta_2) = 0$ for some $\delta_2>0$. Now consider the 
walk $Y(\eta)$ where $\eta = \delta_1\wedge \delta_2$ and let $r_0$ be chosen 
as in \cref{lem:sub_super_mart} for $g_1^\l$, $Y(\eta)$. Clearly, 
there is a probability at least $c(\ve)$ to reach a vertex at distance $3r_0$ from 
$0$ and remaining well within the cone so we can assume we start from such a 
vertex $v$. Let $\tau = 
T^*_{\theta_0-2\delta_1,\theta_0 -\pi+2\delta_2}(Y(\eta)) \wedge 
T^+_{n/2}(Y(\eta)) \wedge T^-_{r_0(1+\eta)}(Y(\eta))$. Clearly
$$
T^+_{n/2}(Y(\eta))<T^-_{r_0(1+\eta)}(Y(\eta)) \wedge 
T^*_{\theta_0-2\delta_1,\theta_0 -\pi+2\delta_2}(Y(\eta)) \text{ implies 
}T^+_{n/2}(\bd)<T^-_{r_0}(\bd) \wedge T^*_{\theta_0-\delta_1,\theta_0 
-\pi+\delta_2}(\bd)
$$  
Thus applying optional stopping and the fact that $g_1^\l(Y(\eta))$ is a 
submartingale until it leaves the cone or comes within distance $r_0$ to the 
tip (note that $g_1(Y(\eta))$ is bounded up to $T^+_{n/2}(Y(\eta)$ so the application of optional stopping is valid):
\begin{align*}
 (2r_0)^{1+\ve} \le g_1^\l(v) & \le 
(3n/4)^{1+\ve}\P^{v,s}(T^+_{n/2}(Y(\eta))<T^-_{r_0(1+\eta)}(Y(\eta)) \wedge 
T^*_{\theta_0-2\delta_1,\theta_0 -\pi+2\delta_2}(Y(\eta)) ) +(r_0)^{1+\ve}\\
& \le (3n/4)^{1+\ve}\P^{v,s}(T^+_{n/2}(\bd)<T^-_{r_0}(\bd) \wedge 
T^*_{\theta_0-\delta_1,\theta_0 
-\pi+\delta_2}(\bd)) +(r_0)^{1+\ve}
\end{align*}
Also notice that if $T^+_{n/2}(\bd)<T^-_{r_0}(\bd) \wedge 
T^*_{\theta_0-\delta_1,\theta_0 
-\pi+\delta_2}(\bd)$ then 
$T^+_{n/2}(\bd)<T^*_{\theta_0-\pi+\eta_1,\theta_0 - \eta_1}(\bd)$ for some 
$\eta_1>0$ thereby completing the proof of this step. \end{proof}

Note that if $T^+_{n/2}(\bd)<T^*_{\theta_0-\pi+\eta_1,\theta_0 - 
\eta_1}(\bd)$ then the walk is in a vertex $v_1$ which is at least distance 
$n/2$ from $-n\bv_0$ and in $\cC_n(\eta_1', \theta_0-\eta_1')$ for some 
$\eta_1'>0$.

\paragraph{Step 2.}(From distance $n/2$ to $m$) 
There exists $0<\eta_2<\eta_1'$ such that 
$$
\P^{v_1,s}(T^+_m(\bd) < 
T^*_{\eta_2,\theta_0-\eta_2,n}(\bd)) \ge \frac{cn^{2p_0-\ve}}{m^{2p_0+\ve}}$$ 
for any $v_1 \in \cC_n(\theta_0-\eta_1',\eta_1')$ with $|v_1|>n/4$ and $m \ge 
n\log n$.

\begin{proof}[Proof of Step 2]
By translation invariance and since $m \gg n$, we can replace 
$\cC_n(\theta_0-\eta_1',\eta_1')$ by $\cC(\theta_0)$ with $v_1$ at a distance at 
least $n/4$ from the origin. Consider the function
\begin{equation}
 g_2^\l (r, \theta):= r^{2p_0+5\eta_1'}\sin\left( 
\frac{\pi}{\theta_0-2\eta_1'}(\theta-\eta_1')\right).
\end{equation}
As in step 1, this has positive Laplacian and takes negative values just 
outside the cone $\cC(\eta_1',\theta_0-\eta_1')$. Using the same argument as in 
\cref{claim:negative} we can modify $g_2^\l$ so that its value is positive 
inside $\cC(\eta_1',\theta_0-\eta_1')$ and negative just outside 
$\cC(\delta_3,\theta_0-\delta_4)$ for some $\delta_3<\eta_1'$ and 
$\delta_4<\eta_1'$. Now following the same strategy as in Step 1 (using the 
submartingale $g_2^\l(Y)$), we find the following chain of inequalities 
\begin{align*}
c(n/4)^{2p_0+5\eta_1'} \le g_2^\l(v_1) \le m^{2p_0+5\eta_1'}\P^{v_1,s}(T^+_m(\bd) 
< T^-_{r_0}(\bd) \wedge T^*_{\delta_3,\theta_0-\delta_4}(\bd)) + 
(r_0)^{2p_0+5\eta_1'} 
\end{align*}
for $n$ large enough and where $r_0$ depends just on $\eps$. We complete by rearranging.\end{proof}

At the end of step 2, suppose we reach a vertex $v_2$ with $|v_2|>m$ and $v_2 
\in \cC(\eta_2,\theta_0-\eta_2)$ where $\eta_2 = \delta_3 \wedge \delta_4$.
Let $T_0(\bd)$ be the first time the walk hits the origin. 

\paragraph{Step 3.} (From 
distance $m$ to the origin). There exist a constant $c$ depending only on $\eps$ such that for $n$ large enough and $m \ge n \log n$, 
$$
\P^{v_2,s}(T_0(\bd)<T^*_{\theta_0,n}(\bd)) \ge 
\frac{c}{m^{2p_0+5\eta_2}}
$$ 
for any $v_2 \in \cC(\eta_2,\theta_0-\eta_2)
$ with 
$m \le |v_2| \le 2m$.
\begin{proof}
[Proof of Step 3.] Again by translation invariance, we can replace 
$\cC_n(\theta_0)$ by $\cC(\theta_0)$. Now we consider the harmonic function 
similar to step 2, but with opposite sign in the exponent.
\begin{equation}
 g_3^\l (r, \theta) := r^{-2p_0 -5 \eta_2 }\sin\left( 
\frac{\pi}{\theta_0-2\eta_2}(\theta-\eta_2)\right)
\end{equation}
Proceeding as before (i.e. modifying $g_3^\l$ using \cref{claim:negative} and 
 choosing a slightly larger cone to work with and choosing $r_0$ 
appropriately using \cref{lem:sub_super_mart}), then using the optional stopping theorem (note that $g_3^\l$ is bounded at distances greater than $r_0$), we obtain
\begin{equation}
 (2m)^{-2p_0-5\eta_2} \le g_3^\l(v_2) \le 
c(r_0)^{-2p_0-5\eta_2}\P^{v_2,s}(T^-_{r_0}(\bd) < T^*_{\theta_0}(\bd))
\end{equation}
This completes the proof because we can reach $0$ from distance $r_0(\ve)$ with 
probability at least $c(\ve)$.
\end{proof}
\begin{proof}[Proof of lower bound of \cref{lem:wrap_up_lemma}]
 Notice that
\begin{equation}
 \P^{0,s}(T^*_{\theta_0,n}(\bd) > m^2,E_n^*) \ge \P^{0,s}(T^*_{\theta_0,n}(\bd) 
> T^+_{m/\log m}(\bd),E_n^*) -\P^{0,s}(T^+_{m/\log m}(\bd) > m^2)
\end{equation}
The first term above has the right lower bound using the three steps executed 
before, while the last term above is negligible compared to the first term 
using \cref{lem:time_bound}. This completes the proof.
\end{proof}

\begin{proof}
 [Proof of \cref{thm:exit}]
 Notice \cref{item4} follows easily from \cref{item1,item2}. 
\cref{mainitem} follows easily by summing the tail estimates of  
\cref{item4}. 
Finally we obtain \cref{item3} from \cref{item1,item2} by summing over $n$. More precisely,
\begin{align}
 \P^s(T>m^2,E) & \le \sum_{n=1}^{m^{1-\ve}}\P^s(T>m^2,|J_T| = n,E) + 
\sum_{n>m^{1-\ve}} \P^s(|J_T| = n,E)\nonumber\\
& \le C \sum_{n=1}^{m^{1-\ve}}\frac{n^{2p_0-1+\ve}}{m^{4p_0-\ve}} + C\sum_{n=m^{1-\eps}}^\infty \frac1{n^{2p_0 +1 - \eps}}\nonumber\\
& \le C\frac{1}{m^{2p_0-2\ve}} + C \frac{1}{m^{2p_0 - 3\eps}}\nonumber
\end{align}
The lower bound follows similarly, by ignoring terms corresponding to $n \ge 
m^{1-\eps}$ and using the corresponding lower bound for the remaining terms. 
This concludes the proof of \cref{thm:exit} and therefore also the proof of 
\cref{T:typ}.
\end{proof}

\appendix
\section{Some heavy tail estimates}\label{A}
In this appendix, we record some lemmas about certain exponents related to 
heavy-tailed random walks. These are standard when the step distribution has 
regular variation, but we need a slight extension without this assumption, and 
while this is probably well known we could not find a reference.

\begin{lemma}
 \label{lem:left_tail}
Let $X_1,X_2\ldots $ be i.i.d.\ with $\P(X_1 \ge n) = n^{-\alpha+o(1)}$ for 
some $\alpha \in (1,2)$, $\E(X_1) = 0$ and $X_1 \ge -1$. Let $S_n = 
\sum_{i=1}^n X_i$. For all $\ve>0$, there exists a $c=c(\eps)>0$ such that 
for any $n \ge 1, \lambda >1$,  
\[
 \P(S_n \le - \lambda n^{\frac1{\alpha-\ve} } )\le 2e^{-c\lambda}
\]
\end{lemma}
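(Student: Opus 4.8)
The plan is to control the left tail $\P(S_n \le -\lambda n^{1/(\alpha-\ve)})$ by truncating the large positive jumps and applying an exponential (Cramér–Chernoff) bound to the truncated walk. First I would fix $\ve>0$ and set $\ve' = \ve/2$, and truncate at level $K := n^{1/(\alpha-\ve')}$: write $X_i = X_i' + X_i''$ where $X_i' = X_i \wedge K$ and $X_i'' = (X_i - K)^+ \ge 0$. Since the $X_i''$ are nonnegative, dropping them only decreases the walk, so it suffices to lower bound $S_n' = \sum_{i=1}^n X_i'$, i.e.\ to bound $\P(S_n' \le -\lambda n^{1/(\alpha-\ve)})$. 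The key point is that $-1 \le X_i' \le K$, so $X_i'$ is bounded below (deterministically) and bounded above by $K$, which makes the exponential moment $\E[e^{-t X_i'}]$ for $t>0$ easy to handle: because $X_i' \ge -1$ we have $e^{-tX_i'} \le e^{t}$, and more usefully, for $t \in (0,1)$ a Taylor estimate gives $\E[e^{-tX_i'}] \le 1 - t\,\E[X_i'] + C t^2 \E[(X_i')^2 \wedge \text{something}]$; I need a bound on the mean and a weak second-moment-type control.

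The two inputs I need about the truncated variable are: (a) $\E[X_i'] = \E[X_i] - \E[X_i''] = -\E[X_i'']$, and $\E[X_i''] = \E[(X_i - K)^+] = \int_K^\infty \P(X_1 > s)\,ds \le \int_K^\infty s^{-\alpha+o(1)}\,ds = O(K^{-(\alpha-1)+o(1)})$, which is $\le n^{-(\alpha-1)/(\alpha-\ve') + o(1)}$; and (b) $\E[(X_i' )_+^2]$ or rather $\E[(X_i')^2 \mathbf 1_{X_i' \ge 0}] \le \E[(X_1 \wedge K)^2 \mathbf 1_{X_1 \ge 0}] = O(K^{2-\alpha+o(1)})$ for $\alpha \in (1,2)$ (the integral $\int_0^K s\,\P(X_1>s)\,ds$ diverges like $K^{2-\alpha}$). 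Combining, for $t \in (0,1)$,
\begin{equation*}
\E[e^{-tX_1'}] \le \exp\!\Big( -t\,\E[X_1'] + C t^2 K^{2-\alpha+o(1)} \Big) \le \exp\!\Big( C' t\, n^{-(\alpha-1)/(\alpha-\ve')+o(1)} + C t^2 n^{(2-\alpha)/(\alpha-\ve')+o(1)} \Big),
\end{equation*}
so by independence $\P(S_n' \le -u) \le e^{-tu} \E[e^{-tX_1'}]^n \le \exp\big(-tu + C' t\, n^{1-(\alpha-1)/(\alpha-\ve')+o(1)} + C t^2 n^{1+(2-\alpha)/(\alpha-\ve')+o(1)}\big)$ for all $t\in(0,1)$.

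Now I optimize. Take $u = \lambda n^{1/(\alpha-\ve)}$. One checks that $1 + (2-\alpha)/(\alpha-\ve') = (2-\ve')/(\alpha-\ve')$ while $2/(\alpha-\ve)$ — the exponent appearing in $u^2/(\text{variance proxy})$ — is strictly larger for $\ve$ small (this is where the extra room between $\ve$ and $\ve'$ is used: $2/(\alpha-\ve) > (2-\ve')/(\alpha-\ve')$ reduces to $2(\alpha-\ve') > (2-\ve')(\alpha-\ve)$, i.e.\ $2\alpha - 2\ve' > 2\alpha - 2\ve - \ve'\alpha + \ve'\ve$, i.e.\ $\ve'\alpha + 2\ve > 2\ve' + \ve'\ve$, true when $\ve' = \ve/2$ and $\alpha > 1$ for small $\ve$). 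Hence the natural choice $t \asymp u / n^{1+(2-\alpha)/(\alpha-\ve')+o(1)}$ — which would be $t \asymp \lambda n^{1/(\alpha-\ve) - (2-\ve')/(\alpha-\ve') + o(1)}$, a negative power of $n$ times $\lambda$, so indeed $t\in(0,1)$ for $n$ large and $\lambda$ not too large — makes the quadratic term comparable to $\tfrac12 tu$, and likewise the linear correction term $C' t n^{1-(\alpha-1)/(\alpha-\ve')+o(1)}$ is $o(tu)$ because $1-(\alpha-1)/(\alpha-\ve') < 1/(\alpha-\ve)$ for small $\ve$. That leaves $\P(S_n' \le -u) \le \exp(-c\, tu) $ and $tu \asymp \lambda^2 n^{2/(\alpha-\ve) - (2-\ve')/(\alpha-\ve')+o(1)}$; since the exponent of $n$ here is positive, this is $\le \exp(-c\lambda^2)$ for $n \ge 1$, which is stronger than the claimed $e^{-c\lambda}$. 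For the finitely many small $n$ and the regime where $t$ would exceed $1$ (i.e.\ $\lambda$ very large, $\lambda \gtrsim n^{\text{positive power}}$), one instead just takes $t$ a small fixed constant and uses $X_1' \ge -1$ directly: $\P(S_n' \le -u) \le e^{-tu}\E[e^{-tX_1'}]^n$, and here $\E[e^{-tX_1'}] \le 1 + Ct^2 + \ldots$ is bounded, while $u = \lambda n^{1/(\alpha-\ve)} \ge \lambda n$, so $e^{-tu} \cdot (\text{const})^n \le e^{-ct\lambda n} (\text{const})^n \le e^{-c'\lambda}$ once $\lambda$ is large enough to absorb the $(\text{const})^n$; for $\lambda$ bounded and $n$ bounded the statement is trivial by adjusting $c$. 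Absorbing the various $o(1)$'s in the exponents (they contribute sub-polynomial factors that are beaten by the strictly positive gap in exponents, at the cost of shrinking $c(\ve)$) and combining the two regimes yields the bound $\P(S_n \le -\lambda n^{1/(\alpha-\ve)}) \le 2 e^{-c\lambda}$.

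The main obstacle is bookkeeping the $o(1)$ exponents and the interplay of the three scales ($\ve$ in the target, $\ve'$ in the truncation, and the $o(1)$ from the regular-variation-type hypothesis $\P(X_1\ge n) = n^{-\alpha+o(1)}$): one must check that the gap $2/(\alpha-\ve) - (2-\ve')/(\alpha-\ve') > 0$ strictly dominates all the $o(1)$ corrections uniformly, and that the optimal $t$ stays in $(0,1)$ on the relevant range of $\lambda$, splitting off the large-$\lambda$ regime where a crude fixed-$t$ Chernoff bound suffices. None of the individual estimates is hard, but the choice $\ve' = \ve/2$ (rather than $\ve' = \ve$) is essential and the inequalities must be arranged so that truncation error, linear drift correction, and the quadratic term are each $o(tu)$.
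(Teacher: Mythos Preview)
Your approach is essentially the same as the paper's: truncate the large positive jumps at a scale just below the target deviation and apply a Bernstein/Chernoff concentration bound to the truncated (bounded) variables. The paper invokes Bernstein's inequality directly while you carry out the Chernoff optimization by hand, but these are equivalent.

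One correction to your endgame. In the regime where the optimal $t$ would exceed $1$ you write $u = \lambda n^{1/(\alpha-\ve)} \ge \lambda n$, but this is false: since $\alpha \in (1,2)$ and $\ve$ is small, $\alpha-\ve>1$, so $n^{1/(\alpha-\ve)} \le n$. The crude fixed-$t$ argument you sketch therefore does not close. The repair is simpler than what you attempted: because $X_i \ge -1$ deterministically, $S_n \ge -n$ always, so $\P(S_n \le -u) = 0$ as soon as $\lambda > n^{1 - 1/(\alpha-\ve)} = n^{(\alpha-1-\ve)/(\alpha-\ve)}$. On the complementary range $\lambda \le n^{(\alpha-1-\ve)/(\alpha-\ve)}$ your optimal $t \asymp \lambda\, n^{1/(\alpha-\ve) - (2-\ve')/(\alpha-\ve')}$ satisfies
\[
t \;\lesssim\; n^{(\alpha-1-\ve)/(\alpha-\ve)}\cdot n^{1/(\alpha-\ve) - (2-\ve')/(\alpha-\ve')} \;=\; n^{\,1 - (2-\ve')/(\alpha-\ve')},
\]
and $1 - (2-\ve')/(\alpha-\ve') < 0$ since $\alpha<2$, so $t$ is automatically bounded (indeed $o(1)$) and the main Chernoff computation covers the entire nontrivial range.
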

\begin{proof}
Fix $\ve>0$.
We are going to consider the truncated variables $X'=X\mathbbm{1}_{X \le 
n^{\frac{1}{\alpha-2\ve}}}$. Since $X$ dominates $X'$, it is enough to prove 
the bound for $S_n'$, the partial sum of $n$ i.i.d.\ variables distributed as 
$X'$. 
Since $\E(X) = 0$, an easy computation yields that $\E(S_n') = 
O(n^{\frac{1-\ve}{\alpha-2\ve}}) = o(\lambda n^{\frac{1}{\alpha-2\ve}})$. 
Similarly, $\var(S_n) = O(n^{\frac{2-\ve}{\alpha - 2\ve}})$. So applying 
Bernstein's inequality, we see that
\begin{align}
 \P(S_n'\le -\lambda n^{\frac{1}{\alpha-2\ve}}) \le \P(S_n'-\E(S_n') \le 
-\frac\lambda 2 n^{\frac{1}{\alpha-2\ve}}) \le 
2\exp\left (-\frac{\frac 1 2 \frac{\lambda^2}{4} n^{\frac{2}{\alpha-2\ve}}}{ 
O(n^{\frac{2-\ve}{\alpha - 2\ve}}) + \frac 1 3 \frac{\lambda}{2}   
n^{\frac{2}{\alpha-2\ve}}} \right) \le 2e^{-c\lambda}
\end{align}
as desired.
\end{proof}

\begin{lemma}
  \label{lem:return_heavy}
Fix $\alpha \in (1,2)$. Let $\{Z_i\}_{i \ge 1}$ be an i.i.d.\ sequence
of integer valued random variables
with $Z_1 \ge -1$ and  $\E(Z_1) = 0$. Suppose
for all $\ve>0$ such that $\alpha -\ve >1$ and $\alpha +\ve<2$, there
exist $c(\ve),C(\ve)>0$  such that for all $n\ge 1$
\[
\frac{c(\ve)}{n^{\alpha +\ve}}\le \P(Z_1 >n) \le \frac{C(\ve)}{n^{\alpha -\ve}}
\]
Let $T_0 = \inf \{t >0, Z_1+\ldots Z_t = 0\}$. Then for all $\ve>0$
such that $\alpha -\ve >1$ and $\alpha +\ve<2$, there exist constants
$c'(\ve),C'(\ve)>0$ such that
\[
\frac{c'(\ve)}{n^{1/\alpha +\ve}}\le \P(T_0>n) \le \frac{C'(\ve)}{n^{1/\alpha 
-\ve}}.
\]
\end{lemma}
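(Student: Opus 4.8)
The plan is to reduce the two–sided estimate on $\P(T_0>n)$ to a one–dimensional \emph{local} estimate on $\P(S_t=-1)$, where $S_t=Z_1+\cdots+Z_t$, using crucially that $Z_i\ge -1$ so that $S$ is left–continuous (skip–free to the left). Here $T_0$ is the first time $S$ is $\le -1$, equivalently the first time $S$ is strictly negative (this is the quantity used, e.g., as $\tau^c$ in \cref{sec:connection}): by left–continuity $S$ reaches $-1$ before any lower level, and $\{T_0=t\}=\{S_1\ge 0,\dots,S_{t-1}\ge 0,\ S_t=-1\}$. Kemperman's cycle lemma for left–continuous walks then gives the exact identity $\P(T_0=t)=\tfrac1t\,\P(S_t=-1)$, hence
\[
\P(T_0>n)\;=\;\sum_{t>n}\frac1t\,\P(S_t=-1).
\]
Thus the lemma reduces to the local bound $\P(S_t=-1)=t^{-1/\alpha+o(1)}$: the one–sided estimates $\P(S_t=-1)\le C_\eps t^{-1/\alpha+\eps}$ for all $t$, and $\P(S_t=-1)\ge c_\eps t^{-1/\alpha-\eps}$ for all large $t$ (an average over $t\in[n,2n]$ already suffices), yield respectively $\P(T_0>n)\le C'_\eps n^{-1/\alpha+\eps}$ and $\P(T_0>n)\ge c'_\eps n^{-1/\alpha-\eps}$ by comparing the series with $\sum_t t^{-1-1/\alpha\mp\eps}$.

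For the upper bound I would use Fourier inversion. After passing to the span lattice the walk is aperiodic and $\P(S_t=-1)\le\tfrac1{2\pi}\int_{-\pi}^{\pi}|\psi(\theta)|^t\,d\theta$ with $\psi(\theta)=\E[e^{i\theta Z_1}]$. From $1-\cos u\ge c\,(u^2\wedge 1)$ and the hypothesis $\P(Z_1>x)\ge c(\eps)x^{-\alpha-\eps}$ one gets $1-|\psi(\theta)|^2\ge c(\eps)|\theta|^{\alpha+\eps}$ near $0$, hence $|\psi(\theta)|\le e^{-c(\eps)|\theta|^{\alpha+\eps}}$ there, while aperiodicity gives $|\psi|\le 1-\delta$ on a neighbourhood of $[-\pi,\pi]\setminus\{0\}$. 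Splitting the integral accordingly, $\int|\psi|^t\le C t^{-1/(\alpha+\eps)}+2\pi(1-\delta)^t\le C' t^{-1/\alpha+\eps}$ after relabelling $\eps$.

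The lower bound on $\P(S_t=-1)$ is the main obstacle, and this is exactly where the weakness of the hypothesis bites: $\P(Z_1>x)=x^{-\alpha+o(1)}$ is \emph{not} regular variation, so no stable local limit theorem is available, and the Fourier method alone cannot control the phase of $\psi(\theta)^t$ on the relevant range $|\theta|\asymp t^{-1/\alpha}$. I would establish it through a regular–variation–free local estimate $\P(S_t=j)=t^{-1/\alpha+o(1)}$, uniform for $|j|\lesssim t^{1/\alpha-\eps}$, proved by a big–jump decomposition: write $Z_i=Z_i\mathbf 1_{Z_i\le M}+Z_i\mathbf 1_{Z_i>M}$ with $M=t^{1/\alpha-\eps}$; the small part sums to $\tilde S_t$, whose increments are negligible relative to its standard deviation $\sigma_t=t^{1/\alpha+o(1)}$, so $\tilde S_t$ obeys a (triangular–array) Gaussian local limit theorem, and whose negative drift $-t^{1/\alpha+o(1)}$ is compensated, and smoothed at the lattice scale, by the big part, which consists of only $t^{o(1)}$ jumps of total mean $t^{1/\alpha+o(1)}$. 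Transferring the anticoncentration fact $\P(|S_t|\le t^{1/\alpha+\eps})\ge\tfrac12$ (itself a consequence of \cref{lem:left_tail} for the left tail, and of the one–big–jump principle together with Bernstein's inequality for the truncated increments on the right) onto the single value $-1$ through this decomposition gives $\P(S_t=-1)\ge c_\eps t^{-1/\alpha-\eps}$ for $t$ large. The delicate point — and the reason the final bound is only $n^{-1/\alpha\pm\eps}$ rather than a sharp asymptotic — is to keep track of the sub/super–polynomial factors hidden in $x^{-\alpha+o(1)}$ so that they do not accumulate across the decomposition; error terms along the way are absorbed using the Fourier upper bound of the previous paragraph. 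Combined with the cycle–lemma identity, this completes the proof.
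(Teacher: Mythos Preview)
Your route via the cycle lemma is genuinely different from the paper's, and your upper bound is fine: Kemperman's identity $\P(T_0=t)=\tfrac1t\P(S_t=-1)$ together with the Fourier bound $|\psi(\theta)|\le e^{-c|\theta|^{\alpha+\eps}}$ (obtained from the \emph{lower} tail assumption via the symmetrised variable $Z-Z'$) does give $\P(S_t=-1)\le Ct^{-1/(\alpha+\eps)}$ and hence the desired upper bound after summation. This is correct, though considerably heavier than the paper's five-line argument, which simply bounds $\P(T_0>n)\le \P(\tau_L<T_0)+\P(S\text{ stays in }(0,L)\text{ for }n\text{ steps})$, handles the first term by optional stopping (Fatou gives $\le 1/L$) and the second by the crude bound $(1-L^{-\alpha+o(1)})^n$, then optimises $L=n^{1/\alpha-\eps}$.

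The lower bound, however, has a real gap. Your scheme needs $\P(S_t=-1)\ge ct^{-1/\alpha-\eps}$, and you propose a big-jump decomposition at level $M=t^{1/\alpha-\eps}$: Gaussian LLT for the truncated sum $\tilde S_t$, with the big part $B_t$ compensating the drift. But under the weak (non-regularly-varying) hypothesis there is \emph{no} truncation level that works. With $M=t^{1/\alpha-\eps}$ the drift of $\tilde S_t$ can be as large as $tM^{1-\alpha+\eps}=t^{1/\alpha+c\eps}$, which exceeds the standard deviation $\sigma_t\le t^{1/\alpha-c'\eps}$; you then need $\P(|B_t-\E B_t|\le\sigma_t)\ge c$, but $B_t$ has infinite variance, so this fails. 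Pushing $M$ up to $t^{1/\alpha+\eps}$ kills the big jumps and the drift problem, but then $M/\sigma_t\to\infty$ and the Gaussian LLT is lost. The phrase ``smoothed at the lattice scale by the big part'' papers over exactly this tension; making it rigorous without regular variation would require substantially more than what is sketched.

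The paper sidesteps the local estimate entirely. For the lower bound it shows $\P(\tau_L<T_0)\ge c/L$ by optional stopping (introducing the auxiliary stop $J=\inf\{n:Z_n\ge L\}$ to force uniform integrability, since $\E[Z_J]=O(L)$), takes $L=n^{1/\alpha+\eps}$, and then invokes \cref{lem:left_tail} to say that once the walk is above $L$ it stays positive for a further $n$ steps with probability $1-o(1)$. This avoids Fourier analysis, local limit theorems, and any delicate balancing, and is where you should redirect your effort.
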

\begin{proof}
The proof follows from fairly elementary martingale arguments. We start with the 
upper bound. Let $S_n = \sum_{m \le n} Z_m$ and let $\tau_L = \inf \{ n \ge 1: 
S_n \ge L\}$ where $L$ will be chosen later. Set $T = T_0 \wedge \tau_L$. Then
$$
\P( T_0 \ge n) \le \P( \tau_L < T_0) + \P( S \text{ remains in $(0,L)$  for time 
$\ge n$}).
$$
We bound each term separately. Since $S$ stopped at $T$ is a nonnegative 
martingale we have (by Fatou's lemma), 
$$
L\P( \tau_L < T_0) \le \E(S_T) \le \E(S_0) = 1
$$
so $\P(\tau_L < T_0) \le 1/L$. On the other hand, for the second term we simply 
observe that at each step there is a probability at least $L^{-\alpha +o(1)}$ of 
leaving the interval $[0,L]$ hence
$$
\P( S \text{ remains in $(0,L)$  for time $\ge n$}) \le (1 - L^{- \alpha + 
o(1)})^n \le \exp ( - n L^{- \alpha + o(1)}).
$$
Therefore
$$
\P(T_0 \ge n) \le \frac1L  + \exp ( - L^{-\alpha + o(1)}n)
$$
Choosing $L = n^{1/\alpha - \eps}$, $\P(T_0 \ge n)  \le n^{-1/\alpha + \eps} + 
\exp ( - n^{\alpha\eps + o(1)} )= O( n^{-1/\alpha + \eps})$ as desired.

For the lower bound, we let $J = \inf\{ n \ge 1: Z_n \ge L\}$ where $L$ will be 
chosen later. Note that $\E( Z_J ) = O(L)$ hence if $T = T_0 \wedge \tau_L 
\wedge J$ then $S$ stopped at $T$ is uniformly integrable. Consequently, 
applying the optional stopping theorem at time $T$, we get
\begin{align*}
1& = \E( S_T)  = \E(S_T \indic{ J = \tau_L < T_0}) + \E(S_T \indic{\tau_L < J, 
\tau_L < T_0})\\
& \le \E( (L + Z_J) \indic{J = \tau_L < T_0}) + 2L \P( \tau_L < T_0)\\
& \le L \P( J < T_0) + \E(Z_J \indic{J< T_0}) + 2L \P(\tau_L< T_0).
\end{align*}
Note now that $J< T_0$ implies $\tau_L < T_0$ and that the event $J<  T_0$ is 
independent of $Z_J$, because it depends only on the values $Z_1, \ldots, 
Z_{J-1}$. Consequently, 
$$
1\le 3 L \P( \tau_L < T_0) + O(L) \P( J< T_0) \le O(L) \P( \tau_L < T_0).
$$
Therefore, $\P( \tau_L < T_0) \ge c/L$ for some constant $c>0$. We now choose $L 
= n^{1/\alpha + \eps}$. From \cref{lem:left_tail}, we see that 
if $\tau_L < T_0$ then 
the walk is super polynomially likely to remain positive for time at least $n$. 
 The lower bound follows. 
\end{proof}

We will also need a lemma which says that the sum of 
heavy-tailed random variables (with infinite expectation) is comparable to the 
maximum. 
\def \Var {\text{Var}}
\begin{lemma}\label{lem:sum_max}
 Let $\{X_i\}_{i\ge 1}$ be i.i.d. random variables with $\P(X_1 > k) 
= k^{-\alpha + o(1)}$ with $\alpha \in (0,1)$. Then for all $m \ge 1$, $\lambda 
> 0, \ve >0$
\[
 \P\left (\frac{\sum_{i=1}^{m} X_i }{(\max_{ 1 \le i  \le m}  X_i)^{1+\ve}} 
>\lambda \right) \le c(\ve)e^{-c'(\ve)\lambda}.
\]
In particular all moments of $\frac{\sum_{i=1}^{m} X_i }{(\max_{ 1 \le i  \le 
m} 
 X_i)^{1+\ve}}$ exist for all $\ve > 0$ and are uniformly bounded in $m\ge 1$.
\end{lemma}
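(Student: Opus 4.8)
The plan is to combine a truncation-and-Chernoff estimate, in the spirit of \cref{lem:left_tail}, with a dyadic decomposition over the size of the maximum $M:=\max_{1\le i\le m}X_i$. We begin with harmless reductions. Enlarging $\ve$ only shrinks the ratio $\sum_{i=1}^m X_i/M^{1+\ve}$ (here we use $X_i\ge 1$; in the applications the $X_i$ are $\{0,1,2,\dots\}$-valued and one reduces to this case by discarding the terms equal to $0$, the number of terms being immaterial in the bound), so it suffices to treat $\ve$ small, in particular with $\alpha+\ve<1$; and it suffices to treat $\lambda\ge\lambda_0(\ve)$ for a large constant, the range $\lambda<\lambda_0$ being absorbed into $c(\ve)$. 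We may also assume $m>\lambda$, since if $m\le\lambda$ then $\sum_i X_i\le mM\le\lambda M\le\lambda M^{1+\ve}$ and the probability is zero. Finally we replace the hypothesis by the two-sided bound $c_\delta k^{-\alpha-\delta}\le\P(X_1>k)\le C_\delta k^{-\alpha+\delta}$ for $k\ge1$, valid for every fixed $\delta>0$, and we fix some $\delta\in(0,\ve/2)$.

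Now fix a cutoff $2^{j_0}\asymp(m/\lambda)^{1/(\alpha-\delta+\ve)}$ (which is $\ge1$ because $m>\lambda$) and split according to whether $M<2^{j_0}$ or $M\ge2^{j_0}$. For the first, "atypically small maximum" term, $\P(M<2^{j_0})=\P(X_1<2^{j_0})^m\le\exp(-m\,\P(X_1\ge2^{j_0}))$, and the lower tail bound together with the choice of $j_0$, the inequality $m\ge\lambda$, and $\delta<\ve/2$ give $m\,\P(X_1\ge2^{j_0})\ge c(\ve)\lambda$, hence $\P(M<2^{j_0})\le e^{-c(\ve)\lambda}$.

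For the second term we decompose dyadically over $M\in[2^j,2^{j+1})$, $j\ge j_0$. On such an event every $X_i$ is $<2^{j+1}$, so $\sum_i X_i=\sum_i X_i\mathbbm{1}_{X_i<2^{j+1}}$, while $M^{1+\ve}\ge2^{j(1+\ve)}$ and some $X_i$ is $\ge2^j$. A union bound over the index realising the maximum together with exchangeability costs a factor $m$, and then peeling that one term off (it is $<2^{j+1}$ after truncation) by independence yields
\[
\P\Big(\sum_i X_i>\lambda M^{1+\ve},\ M\in[2^j,2^{j+1})\Big)\ \le\ m\,\P(X_1\ge2^j)\cdot\P\Big(\sum_{i=2}^m X_i\mathbbm{1}_{X_i<2^{j+1}}>\tfrac{\lambda}{2}2^{j(1+\ve)}\Big).
\]
The last probability is estimated by a Chernoff bound at rate $s=2^{-j}$: from $\E[X_1\mathbbm{1}_{X_1<2^{j+1}}]\le C2^{j(1-\alpha+\delta)}$ one gets $\E[e^{sX_1\mathbbm{1}_{X_1<2^{j+1}}}]\le1+C'2^{-j(\alpha-\delta)}$, so the last probability is at most $\exp\!\big(-\tfrac{\lambda}{2}2^{j\ve}+C'm\,2^{-j(\alpha-\delta)}\big)$. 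The choice of $j_0$ ensures that for $j\ge j_0$ the positive term in the exponent is at most half the negative one, leaving $\exp(-\tfrac{\lambda}{4}2^{j\ve})$; multiplying by the polynomial prefactor $m\,\P(X_1\ge2^j)\le Cm\,2^{-j(\alpha-\delta)}$ and summing the resulting geometric-type series over $j\ge j_0$ gives a bound of the form $C(\ve)\,\lambda\Lambda\, e^{-c(\ve)\lambda\Lambda}$ with $\Lambda=2^{j_0\ve}\ge1$, which (since $\lambda\Lambda\ge\lambda\ge\lambda_0$ and $te^{-ct}\le C e^{-ct/2}$) is at most $C(\ve)e^{-c(\ve)\lambda}$. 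Adding the two contributions, and absorbing the range $\lambda<\lambda_0$, gives the claimed tail bound. The moment statement follows by integrating: $\E[(\sum_i X_i/M^{1+\ve})^p]=\int_0^\infty p\lambda^{p-1}\P(\sum_i X_i/M^{1+\ve}>\lambda)\,d\lambda\le\lambda_0^p+c(\ve)\int_{\lambda_0}^\infty p\lambda^{p-1}e^{-c'(\ve)\lambda}\,d\lambda<\infty$, a bound independent of $m$; for $\ve$ not small, dominate by the same ratio with a fixed small exponent $\ve_0$ in the denominator, using $M\ge1$.

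The step needing the most care is the interplay between $m$ and $\lambda$: one must first peel off the trivial regime $m\le\lambda$, and then tune $j_0$ so that \emph{simultaneously} the event $\{M<2^{j_0}\}$ is exponentially unlikely at rate $\lambda$ and the factor $m$ lost in the union bound over the argmax is reabsorbed by $m\,\P(X_1\ge2^j)$ against the exponential gain $e^{-\lambda2^{j\ve}/4}$. Keeping the $\delta$-losses coming from the $o(1)$ in the tail exponent consistent throughout is routine but must be tracked.
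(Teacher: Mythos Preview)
Your proof is correct and follows essentially the same two-step strategy as the paper: first show the maximum is exponentially unlikely (in $\lambda$) to fall below $(m/\lambda)^{1/\alpha+o(1)}$, then control the truncated sum by a concentration inequality. The only differences are cosmetic---the paper conditions on the exact value of the maximum (after a continuity reduction) and applies Bernstein's inequality, whereas you decompose dyadically over the range of the maximum, pay a union bound over the argmax, and use a direct Chernoff bound; both routes work equally well.
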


\begin{proof}
For simplicity we will assume in this proof that the random variables $X_i$ are 
continuous so that the maximum is unique a.s. (This is no loss of generality, as 
we can always add a small continuous perturbation.) Fix $\ve>0$; we will allow 
every constant $c$ and implicit constants in $O$ notations below to depend on 
$\eps$ but nothing else. We will still write $c$ instead of $c(\eps)$ for 
simplicity.
 %

Let $ X_m^*:=\max_{ 1 \le i  \le m}  X_i $. Notice that if $k \le 
(m/\lambda)^{1/(\alpha+\ve)}$, $\P(X_m^* \le k) \le 
(1-ck^{-\alpha-\ve})^m \le e^{-c\lambda}$. Therefore we can restrict ourselves 
further to the event $\{X_m^* 
\ge (m/\lambda)^{\frac{1}{\alpha+\ve}}\}$. Conditionally on $ X^*_m = k$, where 
$k$ is some number larger than $(m/\lambda)^{1/(\alpha+\ve)}$, note that since 
$X_i$ are assumed to be continuous,
 $$
 \sum_{1 \le i \le m}X_i = k+\sum_{1 \le i \le m-1}\tilde X_i
 $$ 
 where $\tilde 
X_i$ are i.i.d.\ and has the law of $X_i$ conditioned to be at most $k$. It is 
a straightforward computation to see that $\E(\sum_{1 \le i \le m-1}\tilde 
X_i)=O(mk^{1-\alpha + \ve}) = O(k^{1+2\ve}\lambda)$ and $\Var(\sum_{1 \le i 
\le m}\tilde X_i) = O(mk^{2-\alpha + \ve}) = O(k^{2+2\ve}\lambda)$. 
Therefore using these bounds and Bernstein's inequality, for $k \ge 
(m/\lambda)^{\frac{1}{\alpha+\ve}}$:

\begin{align}
 \P\left (\sum_{i=1}^m X_i >\lambda k^{1+4\ve} | X_m^* = k\right) & \le 
\P\left(\sum_{i=1}^{m-1}\tilde X_i > (\lambda-1)k^{1+4\ve} \right) \nonumber\\
& \le \P(\sum_{i=1}^{m-1}(\tilde X_i - \E(\tilde X_i)) 
> \lambda k^{1+3\ve})\nonumber\\
& \le 2 \exp \left(-\frac{\lambda^2k^{2+6\ve}/2}{O(k^{2+2\ve}\lambda) 
 + \lambda k^{2+ 4\eps} /3  }  \right) \le 2 e^{-c \lambda }
\end{align}
which completes the proof.
\end{proof}

\small

\bibliographystyle{abbrv}
\bibliography{RS.bib}

\end{document}